\newtheorem{prop}{Proposition}[section]
\newtheorem{rmk}{Remark}[section]
\newtheorem{defi}{Definition}[section]
\newtheorem{lemma}{Lemma}[section]
\newtheorem{thm}{Theorem}[section]
\newtheorem{cor}{Corollary}[section]
\numberwithin{equation}{section}
\newcommand{\ud}{\,\mathrm{d}}
\newcommand{\udiv}{\, \mathrm{div}}
\newcommand{\R}{{\mathbb{R}}}
\newcommand{\red}{\textcolor{black}}
\author{Xuanrui Feng}
\address{School of Mathematical Sciences, Peking University, Beijing 100871, China.}
\email{pkufengxuanrui@stu.pku.edu.cn}
\author{Zhenfu Wang}
\address{Beijing International Center for Mathematical Research, Peking University, Beijing 100871, China}
\email{zwang@bicmr.pku.edu.cn}
\title[Propagation of Chaos]
{Quantitative Propagation of Chaos for 2D Viscous Vortex Model on the Whole Space}
\subjclass[2010]{35Q35, 76N10}
\keywords{Viscous Vortex Model, Propagation of Chaos, Li-Yau estimate, Heat Kernel Estimate}
\date{\today}
\keywords{Point vortex model,  Biot-Savart law,  relative entropy, propagation of chaos}
\date{\today}
\begin{document}
	
	\begin{abstract}
		 We derive the quantitative estimates of propagation of chaos for the large interacting particle systems in terms of the relative entropy between the joint law of the particles and the tensorized law of the mean field PDE. We resolve this problem for the first time for the viscous vortex model that \red{approximates} 2D Navier-Stokes equation in the vorticity formulation on the whole space. We obtain as key tools the Li-Yau-type estimates and Hamilton-type heat kernel estimates for 2D Navier-Stokes on the whole space.
	\end{abstract}

\maketitle

\tableofcontents

\section{Introduction}
Consider the evolution of $N$ indistinguishable particles given by the following stochastic differential equations (SDEs): 
\begin{equation}\label{particle}
    \ud X_t^i=\frac{1}{N} \sum_{j \neq i} K(X_t^i-X_t^j)\ud t +\sqrt{2\sigma} \ud W_t^i, \quad i=1, \cdots, N,
\end{equation}
where $X_t^i \in \mathbb{R}^2$ represents the position of particle $i$ at time $t$, and $\lbrace W_t^i \rbrace$ are $N$ independent standard Brownian motions on $\mathbb{R}^2$. The SDEs \red{reflect} that the states of particles are driven by the deterministic 2-body \red{interaction} kernel $K$ and the stochastic term $W_t^i$. We assume that $\sigma >0$.

In this article, we focus on the 2D viscous point vortex model on the whole space $\mathbb{R}^2$,  where the \red{interaction} kernel is given by the Biot-Savart law in fluid mechanics:
\begin{equation*}
    K(x)=\frac{1}{2\pi} \frac{(-x_2, x_1)}{|x|^2},
\end{equation*}
where $x=(x_1,x_2) \in \mathbb{R}^2$. Now the dynamics \eqref{particle} is closely connected with the famous 2D Navier-Stokes equation in the vorticity formulation:
\begin{equation}\label{limit}
    \partial_t w+(K \ast w) \cdot \nabla w= \sigma \Delta w.
\end{equation}

We introduce the \red{joint} law $\rho_N(t,x_1,\cdots,x_N)$ of the $N$-particle system \eqref{particle} and we are interested in the limiting behavior of $\rho_N$ as $N \rightarrow \infty$. Since those particles in \eqref{particle} are indistinguishable, we shall assume that the initial law $\rho_N(0)$ is a symmetric probability measure, denoted as $\rho_N(0) \in \mathcal{P}_{sym}(\mathbb{R}^{2N})$, and so is $\rho_N(t)$, for any $t >0$. By classical mean-field limit theory (see for instance the review \cite{jabin2017mean}), one expects that for any $i$, $(X_t^i)$ will converge to the limit McKean-Vlasov process:
\begin{equation}\label{mckeanvlasov}
    \ud \bar{X_t}=K \ast \bar{\rho_t}(\bar{X_t}) \ud t  +\sqrt{2\sigma} \ud  W_t, \quad \bar{\rho_t}=\mbox{Law}(\bar{X_t}),
\end{equation}
where $\bar{X_t} \in \mathbb{R}^2$ and $W_t$ is a standard Brownian motion on $\mathbb{R}^2$.  This process is self-consistent or distribution-dependent due to the appearance of the law of $\bar{X_t}$ in the SDE. By It\^o's formula, the law $\bar \rho_t$ of $\bar{X_t}$ satisfies the nonlinear Fokker-Planck equation:
\begin{equation}\label{truelimit}
    \partial_t \bar{\rho} +\udiv_x (\bar{\rho}(K \ast \bar{\rho}))=\sigma \Delta \bar{\rho},
\end{equation}
which is equivalent to \eqref{limit} by noticing that the Biot-Savart law is divergence-free, i.e. $\udiv K= 0$.  

By It\^o's formula, we write the $N$-particle Liouville equation solved by $\rho_N(t)$: 
\begin{equation}\label{liouville}
    \partial_t \rho_N+\sum_{i=1}^N \udiv_{x_i}  \Big(\rho_N (\frac{1}{N}\sum_{j \neq i} K(x_i-x_j)\Big) =\sigma \sum_{i=1}^N \Delta_{x_i} \rho_N.
\end{equation}
Here $\rho_N$ is understood as the entropy solution of \eqref{liouville}, the definition of which is given in \cite{jabin2018quantitative}. For the sake of completeness, we state it as follows.
\begin{defi}[Entropy Solution]\label{entropysolution}
    A density function $\rho_N \in L^1(\mathbb{R}^{2N})$, with $\rho_N \geq 0$ and $\int_{\mathbb{R}^{2N}} \rho_N \ud X^N=1$, is called an entropy solution to \eqref{liouville} in $[0,T]$, iff $\rho_N$ solves \eqref{liouville} in the sense of distributions and for a.e. $t \leq T$, 
    \begin{align*}
        &\int_{\mathbb{R}^{2N}} \rho_N(t,X^N) \log \rho_N(t,X^N) \ud X^N +\sigma \sum_{i=1}^N \int_0^t \int_{\mathbb{R}^{2N}} \frac{|\nabla_{x_i} \rho_N|^2}{\rho_N} \ud X^N \ud s\\
        \leq &\int_{\mathbb{R}^{2N}} \rho_N^0 \log \rho_N^0 \ud X^N.
    \end{align*}
Hereinafter $X^N= (x_1, \cdots, x_N) \in \mathbb{R}^{2N}$.    
\end{defi}
The existence of such entropy solutions to \eqref{liouville} has been proved in Proposition 1 in \cite{jabin2018quantitative}. Now it is convenient to compare the \red{joint} law $\rho_N$ to the following tensorized law
\begin{equation*}            
\bar{\rho}_N(t,x_1,\cdots, x_N)=\bar{\rho_t}^{\otimes N}(x_1, \cdots, x_N)=\prod_{i=1}^N \bar{\rho}(t, x_i).
\end{equation*}

We define the (scaled) relative entropy between the \red{joint} law $\rho_N$ and the $N$-tensorized law $\bar \rho^{\otimes^N }$ as follows.
\[
H_N (\rho_N \vert \bar \rho^{\otimes N} ) = \frac 1 N  \int_{\mathbb{R}^{2N}} \rho_N \log \frac{\rho_N }{\bar \rho^{\otimes N}} \ud X^N.  
\]

\subsection{Main Results}

Our main result is the development of the article \cite{jabin2018quantitative}, where Jabin and Wang applied the relative entropy method to obtain an explicit quantitative estimate of the convergence rate of system \eqref{particle} to its mean-field limit \eqref{truelimit}, while the state space domain there is restricted to $\mathbb{T}^2$. More precisely, Jabin and Wang obtained in \cite{jabin2018quantitative} the following relative entropy estimate:
\begin{equation*}
        H_N(\rho_N|\bar{\rho}_N)(t) \leq Me^{Mt}\, \Big(H_N(\rho_N^0|\bar{\rho}_N^0)+\frac{1}{N} \Big),
\end{equation*}
where $M$ is some universal constant depending on the initial data. The corresponding uniform-in-time propagation of chaos also for the torus case has been obtained recently in \cite{guillin2024uniform}. 

We are now able to extend the quantitative propagation of chaos result in \cite{jabin2018quantitative} to the whole space case by deriving a series of regularity results of Li-Yau-Hamilton-type and the Gaussian-type decay of the solution to the 2D Navier-Stokes equation \eqref{limit}. Our main result can be stated as follows. 
\begin{thm}[Entropic Propagation of Chaos]\label{entropybound}
    Assume that $\rho_N$ is an entropy solution to (\ref{liouville}) and that $\bar{\rho} \in L^\infty([0,T], L^1 \cap L^\infty (\mathbb{R}^2))$ solves (\ref{truelimit}) with $\bar{\rho} \geq 0$ and $\int_{\mathbb{R}^2} \bar{\rho}(t,x) \ud x =1$. Assume that the initial data $\bar{\rho}_0 \in W^{2,\infty}(\mathbb{R}^2)$ satisfies the growth conditions
    \begin{equation}\label{initialbound1}
        |\nabla \log \bar{\rho}_0(x)|^2 \leq C_1(1+|x|^2),
    \end{equation}
    \begin{equation}\label{initialbound2}
        |\nabla^2 \log \bar{\rho}_0(x)| \leq C_2(1+|x|^2),
    \end{equation}
    and the \red{Gaussian upper bound}
    \begin{equation}\label{initialbound3}
        \bar{\rho}_0(x) \leq C_3\exp(-C_3^{-1}|x|^2),
    \end{equation} 
    \red{for some positive constants $C_1, C_2, C_3$.} Then we have 
    \begin{equation*}
        H_N(\rho_N|\bar{\rho}_N)(t) \leq Me^{Mt^2}\Big( H_N(\rho_N^0|\bar{\rho}_N^0)+\frac{1}{N} \Big),
    \end{equation*}
    where $M$ is some constant that only depends on those initial bounds \red{$C_1,C_2,C_3$ and the initial norm $\|\bar\rho_0\|_{W^{2,\infty}}$}. 
\end{thm}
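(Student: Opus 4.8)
The plan is to run the relative-entropy scheme of Jabin--Wang \cite{jabin2018quantitative}, whose only genuinely torus-specific ingredient is the uniform two-sided control of $\bar\rho$ and of $\log\bar\rho$, and to replace that control by the whole-space substitutes coming from the Li--Yau gradient estimate, the Hamilton-type Hessian estimate, and the Gaussian heat-kernel bound for \eqref{limit} established in the previous sections; the Gaussian weights produced this way are then carried through the large-deviation step.

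\textbf{Step 1 (entropy dissipation).} Combining the Liouville equation \eqref{liouville}, the limiting equation \eqref{truelimit}, the defining inequality of an entropy solution (Definition \ref{entropysolution}), and the divergence-free identities $\udiv K=0$ and $\udiv(K\ast\bar\rho)=0$ --- which make the order-$\sigma$ terms generated by $\Delta\log\bar\rho$ and $|\nabla\log\bar\rho|^2$ cancel exactly --- one obtains, for a.e.\ $t\in[0,T]$,
\begin{equation*}
\frac{d}{dt}H_N(\rho_N|\bar\rho_N)\le-\frac{\sigma}{N}\sum_{i=1}^{N}\int_{\mathbb{R}^{2N}}\rho_N\Big|\nabla_{x_i}\log\tfrac{\rho_N}{\bar\rho_N}\Big|^2\ud X^N+\mathcal{E}_N(t),
\end{equation*}
\begin{equation*}
\mathcal{E}_N(t):=\frac{1}{N}\sum_{i=1}^{N}\int_{\mathbb{R}^{2N}}\rho_N\Big(\frac{1}{N}\sum_{j\neq i}K(x_i-x_j)-b(t,x_i)\Big)\cdot\nabla_{x_i}\log\tfrac{\rho_N}{\bar\rho_N}\ud X^N,\qquad b:=K\ast\bar\rho.
\end{equation*}
It therefore suffices to prove $\mathcal{E}_N(t)\le\frac{\sigma}{2N}\sum_{i}\int\rho_N|\nabla_{x_i}\log\tfrac{\rho_N}{\bar\rho_N}|^2\ud X^N+C(t)\big(H_N(\rho_N|\bar\rho_N)+\tfrac{1}{N}\big)$ with $C(t)$ at most affine in $t$.

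\textbf{Step 2 (whole-space control of the limit profile).} Here the geometry enters. I would invoke the Li--Yau-type estimate to propagate \eqref{initialbound1} to $|\nabla\log\bar\rho(t,x)|^2\le C(1+|x|^2)$ for all $t>0$, the Hamilton-type estimate to propagate \eqref{initialbound2} to $|\nabla^2\log\bar\rho(t,x)|\le C(1+|x|^2)$, and the heat-kernel estimate to propagate \eqref{initialbound3} to a Gaussian upper bound $\bar\rho(t,x)\le C\exp\!\big(-|x|^2/(C(1+t))\big)$, the constants depending only on $C_1,C_2,C_3$ and $\|\bar\rho_0\|_{W^{2,\infty}}$ and not on $T$; in particular $b(t,\cdot)$ is bounded and decaying. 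The two consequences used below are: (i) the exact cancellation $\int_{\mathbb{R}^2}\big(K(x-y)-b(t,x)\big)\bar\rho(t,y)\ud y=0$ for every $x$, immediate from the definition of $b$; and (ii) $\int_{\mathbb{R}^2}\bar\rho(t,x)\exp(\gamma|x|^2)\ud x<\infty$ exactly for $\gamma<c/(1+t)$, so that all moments of $K(x-y)-b(t,x)$ weighted by the polynomially growing $\nabla\log\bar\rho$ are finite against the measure $\bar\rho(t,x)\bar\rho(t,y)\ud x\,\ud y$ --- the Gaussian tail here playing the role of the compactness of $\mathbb{T}^2$ in \cite{jabin2018quantitative}.

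\textbf{Step 3 (weighted large-deviation estimate --- the crux).} Expanding the $i\neq j$ sum splits $\mathcal{E}_N$ into a pairwise-cancellation term plus diagonal remainders of size $O(1/N)$ (controlled by Cauchy--Schwarz and $b\in L^\infty$). For the pairwise term I would follow Jabin--Wang: write $K=\udiv_x V$ with $V$ the explicit antisymmetric, logarithmically growing matrix field, and $b=\udiv_x W$ with $W(t,x):=\int_{\mathbb{R}^2}V(x-y)\bar\rho(t,y)\ud y$ (also of logarithmic growth), integrate by parts in $x_i$, and use the antisymmetry of $V$ and $W$ to annihilate the dangerous second-order contributions --- the contractions of $V$, $W$ against $\nabla^2_{x_i}\log\tfrac{\rho_N}{\bar\rho_N}$ and against $(\nabla_{x_i}\log\tfrac{\rho_N}{\bar\rho_N})\otimes(\nabla_{x_i}\log\tfrac{\rho_N}{\bar\rho_N})$, both of which vanish as a symmetric tensor paired with an antisymmetric one. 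What remains is $\nabla\log\bar\rho(t,x_i)$ contracted with the centered empirical fluctuation of the merely log-singular, log-growing field $V(x_i-\cdot)$, still dotted with $\nabla_{x_i}\log\tfrac{\rho_N}{\bar\rho_N}$; a Cauchy--Schwarz in $\rho_N$ absorbs half of the Fisher dissipation and leaves the term
\begin{equation*}
\frac{C}{\sigma N}\sum_{i=1}^{N}\int_{\mathbb{R}^{2N}}\rho_N\,(1+|x_i|^2)\,\Big|\frac{1}{N}\sum_{j\neq i}\big(V(x_i-x_j)-W(t,x_i)\big)\Big|^2\ud X^N,
\end{equation*}
which I would control by the Gibbs variational inequality together with the exponential-moment estimates of \cite{jabin2018quantitative}: these apply because the bracketed field has the exact cancellation of Step 2(i), has finite moments of every order by Step 2(ii), and --- crucially --- admits the required exponential-moment bound for a smallness parameter that may be taken $\asymp\sigma/(1+t)$, so that $C(t)\lesssim1+t$. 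This non-compact, Gaussian-weighted version of the Jabin--Wang concentration lemmas is the main obstacle: one must track how the admissible exponential-moment radius shrinks as the Gaussian profile of $\bar\rho(t,\cdot)$ spreads in time, and it is exactly this $t$-dependence that turns the final Gronwall exponent from linear into quadratic.

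\textbf{Step 4 (conclusion).} Feeding the bound on $\mathcal{E}_N$ back into Step 1 cancels half the Fisher term and gives $\frac{d}{dt}H_N(\rho_N|\bar\rho_N)\le M(1+t)\big(H_N(\rho_N|\bar\rho_N)+\tfrac{1}{N}\big)$; since $H_N\ge0$, Gronwall's lemma yields $H_N(\rho_N|\bar\rho_N)(t)\le\big(H_N(\rho_N^0|\bar\rho_N^0)+\tfrac{1}{N}\big)\exp\!\big(Mt+\tfrac{M}{2}t^2\big)$, and since $t\le\tfrac{1}{2}(1+t^2)$ one has $Mt+\tfrac{M}{2}t^2\le\tfrac{M}{2}+Mt^2$, so after relabeling $M$ this is the asserted bound $H_N(\rho_N|\bar\rho_N)(t)\le Me^{Mt^2}\big(H_N(\rho_N^0|\bar\rho_N^0)+\tfrac{1}{N}\big)$.
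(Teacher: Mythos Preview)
Your Steps 1, 2, and 4 are fine (and in Step 1 your form of $\mathcal{E}_N$ is equivalent to the one in Lemma~\ref{propagation}, since the piece with $\nabla_{x_i}\log\rho_N$ integrates to zero by $\udiv K=\udiv b=0$). The gap is in Step 3, where you take a genuinely different route from the paper and it does not close.

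You write $K=\udiv_x V$ with $V$ the \emph{antisymmetric} matrix $V=\begin{pmatrix}0&-g\\ g&0\end{pmatrix}$, $g(x)=\tfrac{1}{2\pi}\log|x|$, integrate by parts, use antisymmetry to kill the Hessian and quadratic contractions, then Cauchy--Schwarz against the Fisher term. This leaves
\[
\frac{C}{\sigma N}\sum_{i}\int_{\R^{2N}}\rho_N\,(1+|x_i|^2)\,\Big|\frac{1}{N}\sum_{j}\big(V(x_i-x_j)-W(t,x_i)\big)\Big|^2\ud X^N,
\]
and you then appeal to ``the exponential-moment estimates of \cite{jabin2018quantitative}''. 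But the relevant Jabin--Wang lemma for \emph{squared} empirical fluctuations (their Theorem 3) requires the fluctuation function to lie in $L^\infty$ with small norm; here $V(x-y)\sim\log|x-y|$ is unbounded near the diagonal \emph{and} at infinity, and the polynomial weight $(1+|x_i|^2)$ makes things worse. Their Theorem 4 (Theorem~\ref{deviation} here), which only needs the moment condition \eqref{gamma}, applies to \emph{bilinear} forms $\tfrac1{N^2}\sum_{i,j}\phi(x_i,x_j)$, not to this triple-sum/squared structure. So the step where you invoke Jabin--Wang does not go through as stated, and no variant of their lemmas in the literature covers your term. Notice also that your route never actually uses the log-Hessian estimate of Theorem~\ref{loghessian}, even though you list it in Step~2; this is a signal that something is being bypassed.

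What the paper does instead is much more direct and avoids the squared fluctuation entirely. Starting from the equivalent form of $\mathcal{E}_N$ with $\nabla\log\bar\rho(x_i)$ (no $\nabla_{x_i}\log\tfrac{\rho_N}{\bar\rho_N}$ and hence no need for the Fisher term or Cauchy--Schwarz), one uses that $K$ is \emph{odd} to symmetrize and obtain a genuine bilinear form $\tfrac1{N^2}\sum_{i,j}\phi(x_i,x_j)$ with
\[
\phi(x,y)=\tfrac12 b(x)\!\cdot\!\nabla\log\bar\rho(x)+\tfrac12 b(y)\!\cdot\!\nabla\log\bar\rho(y)-\tfrac12 K(x-y)\!\cdot\!\big(\nabla\log\bar\rho(x)-\nabla\log\bar\rho(y)\big),
\]
which has the two-sided cancellation \eqref{cancel}. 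The crux (Lemma~\ref{psi}) is then the pointwise bound $\sup_y|\phi(x,y)|\le C(1+\sqrt t+|x|^2)$: for $|x-y|\le 1$ one combines the $|x-y|^{-1}$ singularity of $K$ with the mean-value theorem and the log-\emph{Hessian} bound of Theorem~\ref{loghessian}; for $|x-y|\ge 1$ one uses the log-\emph{gradient} bound of Theorem~\ref{loggradient}. With this, the moment condition \eqref{gamma} follows from the Gaussian upper bound on $\bar\rho_t$ exactly as you anticipate, with $\eta(t)\asymp(1+t)^{-1}$, and Theorem~\ref{deviation} closes the argument.
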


The significance of the entropy convergence is well studied now. In particular, once we define the $k$-marginal that is simply the law of the first $k$ particles $(X_t^1, \cdots, X_t^k)$ due to exchangeability, 
\begin{equation*}
    \rho_{N,k}(t,x_1,\cdots,x_k)=\int_{\mathbb{R}^{2(N-k)}} \rho_N(t,x_1,\cdots,x_N) \ud x_{k+1} \cdots \ud x_N,
\end{equation*}
we are able to control the (scaled) relative entropy between $\rho_{N,k}$ and the tensorized law $\bar{\rho}^{\otimes k}$ by the sub-additivity of relative entropy:
\begin{equation*}
    H_k(\rho_{N,k}|\bar{\rho}^{\otimes^k}) := \frac 1 k \int_{\mathbb{R}^{2 k} } \rho_{N, k} \log \frac{\rho_{N,k}}{\bar \rho^{\otimes^k}} \leq H_N(\rho_N|\bar{\rho}^{\otimes^N}).
\end{equation*}
Strong propagation of chaos then follows from the classical Csisz\'ar-Kullback-Pinsker inequality.
\begin{cor}[$L^1$-Propagation of chaos]
    Under the same assumptions of Theorem \ref{entropybound}, assume further that $H_N(\rho_N^0|\bar{\rho}_N^0) \rightarrow 0$ as $N \rightarrow \infty$, then for any $t \in [0,T]$, 
    \begin{equation*}
        H_N(\rho_N|\bar{\rho}_N)(t) \rightarrow 0, \,  \mbox{ as } N \rightarrow \infty.
    \end{equation*}
    Hence the $L^1$-propagation of chaos holds:
    \begin{equation*}
        \Vert \rho_{N,k} -\bar{\rho}^{\otimes^k} \Vert_{L^\infty([0,T],L^1(\mathbb{R}^{2k}))} \rightarrow 0.
    \end{equation*}
    Moreover, if $H_N(\rho_N^0|\bar{\rho}_N^0) \red{\leq \frac{C_0}{N}}$ \red{for some constant $C_0$}, then we obtain the convergence rate
    \begin{equation*}
        \Vert \rho_{N,k} -\bar{\rho}^{\otimes k} \Vert_{L^\infty([0,T],L^1(\mathbb{R}^{2k}))} \leq \frac{C_T}{\sqrt{N}}
    \end{equation*}
    \red{for some large constant $C_T$ that depends on $C_0,C_1,C_2,C_3,T,\|\bar\rho_0\|_{W^{2,\infty}}$.}
\end{cor}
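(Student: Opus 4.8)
The plan is to deduce everything from Theorem~\ref{entropybound} together with two soft, classical inputs: the sub-additivity of relative entropy over a tensorized reference measure, and the Csisz\'ar--Kullback--Pinsker (CKP) inequality. First I would observe that for $t\in[0,T]$ the prefactor $Me^{Mt^2}$ is non-decreasing, so Theorem~\ref{entropybound} already yields the uniform-in-time bound
\[
\sup_{t\in[0,T]} H_N(\rho_N|\bar{\rho}_N)(t) \leq Me^{MT^2}\Big(H_N(\rho_N^0|\bar{\rho}_N^0)+\frac{1}{N}\Big).
\]
Under the hypothesis $H_N(\rho_N^0|\bar{\rho}_N^0)\to 0$, the right-hand side tends to $0$ as $N\to\infty$; since relative entropy is nonnegative, this forces $H_N(\rho_N|\bar{\rho}_N)(t)\to 0$, uniformly on $[0,T]$. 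This is the first assertion of the corollary.

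For the marginals I would invoke the sub-additivity inequality $H_k(\rho_{N,k}|\bar{\rho}^{\otimes k})\leq H_N(\rho_N|\bar{\rho}^{\otimes N})$ recorded in the text just before the corollary; it holds because $\rho_N(t)$ is exchangeable for every $t$ and the reference law is a tensor power, and the only point to check is that $\bar{\rho}_t$ is a genuine probability density, which is part of the hypotheses on $\bar{\rho}$ in Theorem~\ref{entropybound}. The CKP inequality applied to the (un-scaled) relative entropy $k\,H_k(\rho_{N,k}|\bar{\rho}^{\otimes k})$ then gives, for every $t\in[0,T]$,
\[
\|\rho_{N,k}-\bar{\rho}^{\otimes k}\|_{L^1(\mathbb{R}^{2k})}^2 \leq 2k\,H_k(\rho_{N,k}|\bar{\rho}^{\otimes k}) \leq 2k\,H_N(\rho_N|\bar{\rho}_N)(t).
\]
Taking the supremum over $t\in[0,T]$ and combining with the first part yields $\|\rho_{N,k}-\bar{\rho}^{\otimes k}\|_{L^\infty([0,T],L^1(\mathbb{R}^{2k}))}\to 0$.

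For the rate, I would simply insert the quantitative hypothesis $H_N(\rho_N^0|\bar{\rho}_N^0)\leq C_0/N$ into the uniform entropy bound and then into the last display, obtaining
\[
\|\rho_{N,k}-\bar{\rho}^{\otimes k}\|_{L^\infty([0,T],L^1(\mathbb{R}^{2k}))}^2 \leq 2k\,Me^{MT^2}\,\frac{C_0+1}{N},
\]
so one may take $C_T=\big(2k\,M(C_0+1)\big)^{1/2}e^{MT^2/2}$, which depends only on $k$ and, through $M$, on $C_0,C_1,C_2,C_3,T,\|\bar{\rho}_0\|_{W^{2,\infty}}$.

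I do not expect a serious obstacle here: the entire analytic content sits in Theorem~\ref{entropybound}, and the corollary is a packaging statement. The only points that require a line of care are (i) the monotonicity of the prefactor, so that the $[0,T]$-supremum is controlled by the endpoint value; (ii) tracking the correct numerical constant in CKP while passing between the scaled entropy $H_k$, the un-scaled Kullback--Leibler divergence, and the $L^1$ distance (which is twice the total variation distance); and (iii) checking the standing hypotheses of the sub-additivity lemma, namely exchangeability of $\rho_N(t)$ and $\int_{\mathbb{R}^2}\bar{\rho}_t\,\ud x=1$, both of which are already assumed.
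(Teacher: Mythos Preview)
Your proposal is correct and is exactly the approach the paper indicates: the text immediately preceding the corollary records the sub-additivity inequality $H_k(\rho_{N,k}\mid\bar\rho^{\otimes k})\le H_N(\rho_N\mid\bar\rho^{\otimes N})$ and states that strong propagation of chaos then follows from the Csisz\'ar--Kullback--Pinsker inequality, with no further argument given. Your write-up supplies precisely these two steps together with the trivial observation that $Me^{Mt^2}$ is monotone on $[0,T]$; the only cosmetic discrepancy is that your constant $C_T$ carries an explicit $\sqrt{k}$ factor which the paper's statement suppresses.
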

\begin{rmk}\label{longtime}
	If we only care about the global convergence rate with respect to $N$ in terms of relative entropy, then the entropy bound in Theorem \ref{entropybound}, i.e. $H_N(\rho_N|\bar{\rho}_N)(t) \leq \frac{\red{M}e^{\red{M}t^2}}{N}$ given i.i.d. initial data $\bar \rho_N(0) = \bar \rho_0^{\otimes N}$ is optimal\red{-in-$N$}. 
	However, if the \red{interaction} kernel $K$ is bounded, for instance, one may expect to obtain a better convergence rate of marginals with respect to $N$ via the BBGKY hierarchy as done by Lacker \cite{lacker2021hierarchies}.  
    See also a recent mean-field limit result using similar approaches based on hierarchy in \cite{bresch2025new}. 
    It is still open \red{whether an optimal-in-$N$} convergence rate $\|\rho_{N, k}(t) - \bar \rho_t^{\otimes^k } \|_{L^1} \lesssim \frac{\red{k}}{N}$ holds for the vortex model with the \red{interaction} kernel given by the Biot-Savart law.  \red{For developments subsequent to the release of this manuscript, see Remark \ref{Added}.}
\end{rmk}
\begin{rmk}
	 Theorem \ref{entropybound} holds for any finite time horizon $[0, T]$. 
    We expect to extend this finite time convergence result to a uniform-in-time version as in \cite{guillin2024uniform}. This may rely on the dissipation properties of the limit PDE, which is here the vorticity formulation of the 2D Navier-Stokes equation, via some logarithmic Sobolev inequalities with Gaussian measures in $\mathbb{R}^2$ playing the role of the reference measure. We are content with the current propagation of chaos result and leave the possible uniform-in-time version for our future study. \red{For developments subsequent to the release of this manuscript, see Remark \ref{Added}.}
\end{rmk}

\red{
\begin{rmk}[Added in Proof] \label{Added}
  While this article was under review, several notable developments appeared in subsequent works. The sharp local propagation of chaos initiated by Lacker \cite{lacker2021hierarchies} has been extended to $\dot W^{-1,\infty}$ interaction kernels on the torus by S. Wang \cite{wang2024sharp}, under the additional assumption of high viscosity. This result essentially covers the periodic Biot-Savart kernel. For the whole space case, this result has been addressed in another work of the authors \cite{feng2024quantitative}. We have also generalized the model to the general circulation case and optimized the time dependence for the convergence of relative entropy, i.e. $H_N(\rho_N|\bar\rho_N) \lesssim \frac{(1+t)^M}{N}$. The work of Rosenzweig-Serfaty \cite{rosenzweig2024relative} considered the same problem in the general Riesz setting. A self-similar transformation has been applied to create a quadratic confinement in the mean-field dynamics. Under the assumption of uniform-in-time boundedness of the Hessian of the logarithm of the ratio of the solution relative to a Gaussian, a uniform-in-time propagation of chaos (actually stronger generation of chaos) is proved. This uniform-in-time bound has been later proved by Monmarch\'e-Ren-Wang \cite{monmarche2024time}.
\end{rmk}
}

\subsection{Gaussian Fluctuation}
Since we have established the relative entropy bound as in Theorem \ref{entropybound}, we can adapt the analysis in \cite{wang2023gaussian} to obtain a Gaussian fluctuation/Central Limit Theorem type result for the viscous vortex model on the whole space now.  We follow all the notation from the article \cite{wang2023gaussian}. We define the empirical measure as 
\[
\mu_N(t) = \frac 1 N \sum_{i=1}^N \delta_{X_t^i},
\]
and the fluctuation measure as 
\begin{equation}\label{fluc}
\eta^N_t = \sqrt{N} (\mu_N(t) - \bar \rho_t). 
\end{equation}
Propagation of chaos established in Theorem \ref{entropybound} implies that $\mu_N(t)$ converges weakly to $\bar \rho_t$. Gaussian fluctuation is about the next-order approximation of the empirical measure $\mu_N(t)$. Roughly speaking, from \eqref{fluc}, one has 
\[
\mu_N(t) = \bar \rho_t + \frac{1}{\sqrt{N}} \eta_N = \bar \rho_t + \frac{1}{\sqrt{N}} \eta_t + o\Big(\frac{1}{\sqrt{N}} \Big),  
\]
given that the asymptotic behavior (as $N\to \infty$) of the fluctuation measures $(\eta_t^N)$ can be described by a continuum model $(\eta_t)$.  

For the vortex model \eqref{particle} setting on the torus $\mathbb{T}^2$, both with $\sigma>0$ or $\sigma=0$, it has been shown in \cite{wang2023gaussian} that the fluctuation measures $(\eta_t^N)$ converges in distribution to a generalized Ornstein–Uhlenbeck process that can be described by a stochastic PDE. See Theorem 1.4 and Theorem 1.7 in \cite{wang2023gaussian} for details. 

When the viscous vortex model ($\sigma>0$) \eqref{particle} is set on the whole space $\mathbb{R}^2$, we still have the following Gaussian fluctuation type theorem. 

\begin{thm}[Gaussian Fluctuation] 
Under the same assumptions as in Theorem \ref{entropybound}, assume further that the sequence of initial fluctuations $\eta_0^N$ converges in distribution to $\eta_0$ in $\mathcal{S}'(\mathbb{R}^2)$, $H_N(\rho_N^0|\bar{\rho}_N^0) \leq \frac{C}{N}$ and $\bar \rho_0 \in W^{3, \infty}(\mathbb{R}^2)$.  Then the sequence of fluctuation measures $(\eta_t^N)_{t \in [0, T]}$ defined in \eqref{fluc} converges in distribution to $(\eta_t)_{t \in [0, T]}$ in the space $L^2 ([0, T], H^{- \alpha}) \cap C([0, T], H^{- \alpha -2})$, for every $\alpha >1$, where $(\eta_\cdot )$ is the unique martingale solution to the stochastic PDE:
	\[
	\partial_t \eta  = \sigma \Delta \eta  -  K \ast \eta \cdot \nabla \bar \rho -  K \ast \bar \rho \cdot \nabla \eta  - \sqrt{2 \sigma} \nabla \cdot (\sqrt{\bar \rho \xi }), \quad \eta(0) = \eta_0,
	\]
	where $\xi$ is a vector-valued space-time white noise on $\mathbb{R}^{+} \times \mathbb{R}^2$, i.e. a family of centered Gaussian random variables $\{ \xi (h) \vert h \in L^2 (\mathbb{R}^{+} \times \mathbb{R}^2, \mathbb{R}^2)\}$, such that 
	\[
	\mathbb{E} \big[ |\xi(h)|^2\big] = \|h\|^2_{L^2 (\mathbb{R}^+ \times \mathbb{R}^2, \mathbb{R}^2)}. 
 	\] 
\end{thm}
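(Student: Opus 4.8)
The plan is to adapt the strategy of \cite{wang2023gaussian}, which treats the torus case, replacing the compactness and boundedness features of $\mathbb{T}^2$ by the Gaussian-decay and Li--Yau--Hamilton regularity estimates for $\bar{\rho}$ proved in the previous sections, together with the relative entropy bound of Theorem \ref{entropybound}. The argument has four steps: (i) derive the finite-$N$ evolution equation for $\eta_t^N$; (ii) prove uniform moment bounds and tightness of the laws of $(\eta^N_\cdot)$ in $L^2([0,T],H^{-\alpha})\cap C([0,T],H^{-\alpha-2})$; (iii) identify every limit point as a martingale solution of the limiting SPDE; (iv) prove uniqueness of that martingale solution, whence the full sequence converges.

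For Step (i) I would apply It\^o's formula to $\langle\mu_N(t),\phi\rangle$ for $\phi\in\mathcal{S}(\mathbb{R}^2)$, subtract the weak form of \eqref{truelimit} tested against $\phi$, and multiply by $\sqrt{N}$ to obtain
\begin{equation*}
\ud\langle\eta_t^N,\phi\rangle=\langle\eta_t^N,\sigma\Delta\phi-(K\ast\bar{\rho}_t)\cdot\nabla\phi\rangle\ud t-\langle\bar{\rho}_t,(K\ast\eta_t^N)\cdot\nabla\phi\rangle\ud t+\tfrac{1}{\sqrt{N}}R_t^N(\phi)\ud t+\ud M_t^N(\phi),
\end{equation*}
where $R_t^N(\phi)=\langle\eta_t^N\otimes\eta_t^N,K(x-y)\cdot\nabla\phi(x)\rangle$ up to an $O(N^{-1/2})$ diagonal correction from the constraint $j\neq i$, and $M_t^N(\phi)=\frac{\sqrt{2\sigma}}{\sqrt{N}}\sum_i\int_0^t\nabla\phi(X_s^i)\cdot\ud W_s^i$ is a martingale with bracket $\langle M^N(\phi)\rangle_t=2\sigma\int_0^t\langle\mu_N(s),|\nabla\phi|^2\rangle\ud s$. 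Using $\udiv K=0$ and $K=\nabla^\perp G$ the linear drift rewrites in divergence form and matches the deterministic part of the stated SPDE, while the bracket matches the variance produced by the noise term $-\sqrt{2\sigma}\,\nabla\cdot(\sqrt{\bar{\rho}}\,\xi)$.

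Steps (ii) and (iii) rest on the estimates, uniform in $N$ and $t\le T$, that $\mathbb{E}|\langle\eta_t^N,\psi\rangle|^2\lesssim\|\psi\|_{H^\alpha}^2$ and $\mathbb{E}|R_t^N(\phi)|\lesssim_\phi 1$ for $\alpha>1$. I would obtain these from $H_N(\rho_N|\bar{\rho}_N)(t)\le C/N$ via the Gibbs variational principle: for any observable $F$, $\mathbb{E}_{\rho_N}[F]\le H_N(\rho_N|\bar{\rho}_N)+\frac{1}{N}\log\mathbb{E}_{\bar{\rho}_N}[e^{NF}]$, and under the tensorized reference $\bar{\rho}_t^{\otimes N}$ the particles are i.i.d.\ with law $\bar{\rho}_t$, which by the Gaussian upper bound has sub-Gaussian tails, so a Bernstein/Hoeffding estimate for the centered sum $N^{-1/2}\sum_i(\psi(X^i)-\langle\bar{\rho}_t,\psi\rangle)$ gives the control, which transfers to $\rho_N$ through the entropy inequality. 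The quadratic term $R_t^N(\phi)$ is treated by splitting the $y$-integral at $|x-y|=1$, using $K\in L^p_{loc}$ for $p<2$ with the $L^\infty$ bounds on $\bar{\rho}_t$ near the diagonal and boundedness of $K$ together with Gaussian decay of $\bar{\rho}_t$ and moment bounds on $\mu_N$ (inherited from the initial data) at infinity. With these bounds, uniform estimates in $L^2([0,T],H^{-\alpha})$ follow by pairing $\eta^N_t$ against an orthonormal basis (using $\alpha>1$, so $H^\alpha(\mathbb{R}^2)\hookrightarrow C_0$), and a Kolmogorov-type time-regularity estimate for $t\mapsto\langle\eta_t^N,\phi\rangle$ read off from the fluctuation equation yields equicontinuity in $C([0,T],H^{-\alpha-2})$; an Aubin--Lions--Simon argument then gives tightness. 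Along a convergent subsequence, the linear drift terms pass to the limit by continuity, $N^{-1/2}R_t^N\to0$ by the uniform bound on $R_t^N$, and $\langle M^N(\phi)\rangle_t\to2\sigma\int_0^t\langle\bar{\rho}_s,|\nabla\phi|^2\rangle\ud s$ by the law of large numbers for $\mu_N$ (a consequence of the propagation of chaos already established), so the limit solves the martingale problem with datum $\eta_0$. For Step (iv), since the SPDE is linear in $\eta$ and the only randomness is additive white noise forced through the fixed coefficient $\sqrt{\bar{\rho}_t}$, every martingale solution is a generalized Ornstein--Uhlenbeck (hence Gaussian) process, and uniqueness reduces to well-posedness of the deterministic dual equation $\partial_t v=\sigma\Delta v-(K\ast\bar{\rho})\cdot\nabla v-(K\ast v)\cdot\nabla\bar{\rho}$ in the test-function class, which I would prove by energy estimates in $H^\alpha$ with a Gaussian weight matching the decay of $\bar{\rho}_t$, using $K\ast\bar{\rho}_t\in L^\infty$, the propagated bounds on $\nabla\bar{\rho}_t,\nabla^2\bar{\rho}_t$ from $\bar{\rho}_0\in W^{3,\infty}$, and $\|K\ast v\|\lesssim\|v\|$ in the relevant spaces; uniqueness then upgrades subsequential tightness to convergence in law of the whole sequence.

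The hard part will be Step (ii): converting the $O(1/N)$ relative-entropy bound into quantitative sub-Gaussian control of the linear fluctuations $\langle\eta_t^N,\psi\rangle$ uniformly over test functions and in time, while simultaneously controlling the Biot--Savart singularity in $R_t^N$ and the lack of compactness of $\mathbb{R}^2$. This is exactly where the Li--Yau--Hamilton and Gaussian heat-kernel estimates of the earlier sections carry the load, and adapting the torus arguments of \cite{wang2023gaussian} to weighted Sobolev spaces on the whole space is the most delicate point.
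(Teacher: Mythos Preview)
Your proposal is correct and follows essentially the same approach as the paper. In fact, the paper does not give a detailed proof of this theorem at all: it simply observes that the assumption $H_N(\rho_N^0|\bar\rho_N^0)\le C/N$ together with Theorem~\ref{entropybound} yields $\sup_{t\in[0,T]}H_N(\rho_N(t)|\bar\rho_t^{\otimes N})\le C_T/N$, points out that this is exactly the key input used in \cite{wang2023gaussian}, and then declares that ``all other parts shall be standard which we leave for the interested readers to go through as in \cite{wang2023gaussian}.'' Your outline---It\^o decomposition of $\langle\eta_t^N,\phi\rangle$, tightness via the Donsker--Varadhan change-of-measure combined with i.i.d.\ sub-Gaussian estimates under $\bar\rho_t^{\otimes N}$, vanishing of the quadratic remainder, identification of the limit martingale problem, and linear uniqueness---is precisely the content of \cite{wang2023gaussian} that the paper is deferring to, so you have supplied more detail than the paper itself. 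One small recalibration: the Li--Yau--Hamilton estimates are consumed entirely in the proof of Theorem~\ref{entropybound}; once the uniform entropy bound is available, the fluctuation argument needs only the Gaussian decay of $\bar\rho_t$ (Lemma~\ref{mostgauss}) and the Sobolev bounds of Section~\ref{section2}, not the logarithmic gradient or Hessian estimates directly.
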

When the viscous vortex model \eqref{particle} is endowed with independent and identically distributed (i.i.d.) initial data, i.e. $\rho_N(0) = \bar \rho_0^{\otimes N}$, or more generally when initially the (not scaled) relative entropy  $\int_{\mathbb{R}^{2N}} \rho_N(0) \log \frac{\rho_N(0)}{\bar \rho_0^{\otimes N }} \to 0$ as $N \to \infty$ (see Proposition 5.2 in \cite{wang2023gaussian}), then the sequence of initial fluctuations $\eta_0^N $ converges in distribution to $\eta_0$, a Gaussian random variable, automatically. Furthermore, by Theorem \ref{entropybound}, the condition that $H_N(\rho_N^0|\bar{\rho}_N^0) \leq \frac{C}{N}$ implies that 
\[
\sup_{t \in [0, T]} H_N (\rho_N(t) \vert \bar \rho_t^{\otimes N}) \leq \frac{C_T}{N}, 
\]
where $C_T$ is a constant that only depends on $T$ and the initial data. This estimate plays a key role in \cite{wang2023gaussian} in establishing the Gaussian fluctuation result for $(\eta_t^N)_{t \in [0, T]}$. \red{In particular, combining with \cite[Lemma 2.6, Corollary 2.7]{wang2023gaussian}, we deduce that the initial data $\eta_0 \in H^{-\alpha}$, for $\alpha >1$.}   All other parts shall be standard which we leave for the interested readers to go through as in \cite{wang2023gaussian}. The remaining part of this article will then only concern propagation of chaos.   

\subsection{Related Literature}

Propagation of chaos and mean-field limit for the first-order system given in our canonical form \eqref{particle} have been extensively studied over the last decade. The basic idea of deriving some effective PDE describing the large scale behavior of interacting particle systems dates back to Maxwell and Boltzmann. But in our setting, the very first mathematical investigation can be traced back to McKean in \cite{mckean1967propagation}. See also the classical mean-field limit from Newton dynamics towards Vlasov kinetic PDEs in \cite{dobrushin1979vlasov,braun1977vlasov} and their recent development as in \cite{jabin2014review,bresch2025new}. Recently much progress has been made in the mean-field limit for systems as \eqref{particle} with singular interaction kernels, including those results focusing on the vortex model \cite{osada1986propagation,fournier2014propagation} and very recently on more general singular kernels as in \cite{duerinckx2016mean,jabin2018quantitative,serfaty2020mean,bresch2020mean,guillin2024uniform,nguyen2022mean,rosenzweig2022mean,rosenzweig2023global}. See also the references therein for more complete development on this subject. 

The point vortex approximation towards 2D Navier-Stokes/Euler equation arouses a lot of interest since the 1980s. Osada \cite{osada1986propagation} first obtained a \red{propagation} of chaos result for \eqref{particle} with a bounded initial distribution and a large viscosity $\sigma$. More recently, Fournier-Hauray-Mischler \cite{fournier2014propagation} obtained entropic propagation of chaos by the compactness argument and their result applies to all viscosities, as long as it is positive, and to all initial distributions with finite $k$-moment ($k>0$) and Boltzmann entropy. The vortex model \red{considered} in \cite{fournier2014propagation} is set on the entire plane $\mathbb{R}^2$, while no explicit convergence rate has been obtained there. A quantitative estimate of the propagation of chaos has been established in \cite{jabin2018quantitative} by evolving the relative entropy between the joint distribution of \eqref{particle} and the tensorized law at the limit. Quantitative propagation of chaos results in \cite{jabin2018quantitative} can cover first-order systems with a general class of \red{interaction} kernels, for instance those $K \in W^{-1, \infty}$ and $\udiv K \in W^{-1, \infty}$. However, the particle system \eqref{particle} lives in the compact domain $\mathbb{T}^d$. Theorem 1 in \cite{jabin2018quantitative} requires that $ \inf_{t \in [0, T] }\inf_{x \in \mathbb{R}^2} \bar \rho_t(x) >0$, which is not possible if the domain is the whole space. The validity of  Theorem 2 in \cite{jabin2018quantitative} is essentially not restricted to the torus case, though we still need some strong assumption as $\log \bar \rho$ is in some $BMO$ space. In this article, we firstly derive the quantitative propagation of chaos, which is optimal \red{in the number of particles $N$}, for the viscous vortex model set on the whole plane $\mathbb{R}^2$ that approximating the vorticity formulation of the 2D Navier-Stokes equation. 

The modulated energy method introduced in \cite{duerinckx2016mean,serfaty2020mean} can treat the mean-field limit problem set in the whole space $\mathbb{R}^d$ very well, in particular for the deterministic interacting particle systems as \eqref{particle} with $\sigma =0$.  We refer to the result in \cite{rosenzweig2022mean} for a mean-field convergence of point vortices to the 2D incompressible Euler equation with the only assumption that the vorticity is in $L^\infty$. For sub-Coulomb interactions, the authors in \cite{rosenzweig2023global} can even obtain global-in-time mean-field convergence for gradient/conservative diffusive flows. However, the results in \cite{rosenzweig2023global} cannot cover the situation of our viscous vortex model, even in a finite time horizon. 

\subsection{Structure of the Article}
The rest of this article is organized as follows. In Section \ref{section2} we first obtain some \textit{a priori} Sobolev-type regularity results of the limit PDE. Section \ref{section3} is devoted to the derivation of the Li-Yau-type gradient estimate for the 2D Navier-Stokes \red{equation}, and the parabolic Harnack-type inequality and the Gaussian-type decay from below to the solution are obtained as consequences. Section \ref{section4} is devoted to the Hamilton-type heat kernel estimate together with the Gaussian-type decay from above, which are the key estimates in proving the main result. In Section \ref{section5} we finish the proof of Theorem \ref{entropybound}. Section \ref{appA} and \ref{appB} are left for the elementary calculations for the proof of two technical lemmas for the Hamilton-type estimates in Section \ref{section4}.

\section{\textit{A Priori} Regularity Results}
\label{section2}
Our main result relies on the regularity of the limit equation (\ref{truelimit}). Consider the following 2D Navier-Stokes equation in the vorticity formulation
\begin{align}
    \label{vor}
    \partial_t \Bar{\rho} +(K \ast \Bar{\rho}) \cdot \nabla \Bar{\rho} &=\Delta \Bar{\rho},\\
    \Bar{\rho}(\cdot, 0)&=\bar{\rho}_0.
\end{align}
For simplicity we take $\sigma=1$. Let $u=K \ast \Bar{\rho}$ be the velocity field.

The well-posedness results and long-time behavior of the above Cauchy problem have been well studied in \cite{ben1994global,kato1994navier}. Given $\bar{\rho}_0 \in L^1(\mathbb{R}^2)$, one has $u(x,t) \in C^\infty(\mathbb{R}^2 \times \mathbb{R}^+)$, $\Bar{\rho}(x,t) \in C^\infty(\mathbb{R}^2 \times \mathbb{R}^+)$ and solves (\ref{vor}) in the classical sense. 

In our simplified setting, we also assume that $\Bar{\rho} \geq 0$ and that $\Bar{\rho} \in L^1 \cap L^\infty(\mathbb{R}^2)$ in order to keep $\bar{\rho}_0$ and thus $\Bar{\rho}(\cdot, t)$ a probability density function, whence the smoothness of $\Bar{\rho}$ is valid. We also assume on $\bar{\rho}_0$ the initial bounds (\ref{initialbound1})-(\ref{initialbound3}). These assumptions are consistent with those in our main result.

We are now ready to derive some Sobolev-type regularity results of $\Bar{\rho}$ directly from the PDE. Our main tool is the parabolic maximum principle. \red{Throughout this article, all the constants, unless explained specifically, are independent of time $t$ and only depend on the initial bounds $C_1,C_2,C_3,\|\bar\rho_0\|_{W^{2,\infty}}$ that appear in our main result. The constant $C$ may vary line-by-line, but all the other constants are fixed once being defined.}

\begin{lemma}\label{kconw} 
    Given  the initial data $\bar{\rho}_0 \in L^\infty \cap L^1(\mathbb{R}^2)$ for \eqref{vor}, one has that  for any $t \in [0, \infty)$, the velocity field $u_t=K\ast \Bar{\rho}_t \in L^\infty(\mathbb{R}^2)$ with
    \begin{equation}\label{ureg}
         \| u_t \|_{\infty} \leq \frac{\red{A_1}}{\sqrt{t \vee 1}},
    \end{equation}
    where $\Bar{\rho}_t (\cdot ) = \Bar{\rho}(\cdot, t)$ and $u_t(\cdot) = u(\cdot, t)$. \red{Here the constant $A_1$ depends only on $\|\bar\rho_0\|_\infty$.}
\end{lemma}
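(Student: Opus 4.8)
The plan is to combine an elementary convolution estimate for the Biot--Savart kernel with the known $L^p$ decay of the vorticity. First I would record the pointwise bound $|K(x)| = \frac{1}{2\pi |x|}$, which places $K$ in weak $L^2$, and then establish the interpolation inequality
\[
\|K \ast f\|_\infty \le C \|f\|_1^{1/2}\|f\|_\infty^{1/2}
\]
by the standard near-field/far-field splitting: for any $R>0$,
\[
|K\ast f(x)| \le \|f\|_\infty \int_{|z|\le R}\frac{\ud z}{2\pi|z|} + \|f\|_1 \sup_{|z|>R}|K(z)| = \|f\|_\infty R + \frac{\|f\|_1}{2\pi R},
\]
and then optimizing in $R$. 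Applying this with $f = \bar{\rho}_t$ and using that mass is conserved, $\|\bar{\rho}_t\|_1 = \|\bar{\rho}_0\|_1 = 1$, reduces the lemma to a bound on $\|\bar{\rho}_t\|_\infty$, since it gives $\|u_t\|_\infty \le C \|\bar{\rho}_t\|_\infty^{1/2}$.

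Next I would invoke two facts about the solution of \eqref{vor}. For short times, the maximum principle — valid here because $\udiv u = 0$ so that the transport term does not affect the supremum — yields $\|\bar{\rho}_t\|_\infty \le \|\bar{\rho}_0\|_\infty$ for all $t\ge0$; this already handles $0 \le t \le 1$, where $t\vee 1 = 1$. For long times I would use the $L^1\to L^\infty$ smoothing/decay estimate $\|\bar{\rho}_t\|_\infty \le \frac{C}{t}\|\bar{\rho}_0\|_1 = \frac{C}{t}$ for $t>0$ with a \emph{universal} constant $C$; this is part of the well-posedness theory of \cite{ben1994global,kato1994navier}, and can also be derived directly from Nash's inequality in $\mathbb{R}^2$ via a Moser-type iteration, the transport contribution $\int u\cdot\nabla(\bar{\rho}^p) = -\int (\udiv u)\,\bar{\rho}^p$ dropping out because the drift is divergence-free. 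Combining the two regimes gives $\|\bar{\rho}_t\|_\infty \le C(1+\|\bar{\rho}_0\|_\infty)/(t\vee 1)$, hence
\[
\|u_t\|_\infty \le C\,\frac{(1+\|\bar{\rho}_0\|_\infty)^{1/2}}{\sqrt{t\vee 1}},
\]
which is the asserted bound with $A_1 = C(1+\|\bar{\rho}_0\|_\infty)^{1/2}$ depending only on $\|\bar{\rho}_0\|_\infty$.

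The only nontrivial ingredient is the long-time $L^1$--$L^\infty$ decay of $\bar{\rho}_t$; everything else is soft. I expect this to be the main point to handle with care — essentially the choice between citing it from \cite{ben1994global,kato1994navier} or including the short Nash-inequality derivation for completeness. If one wants a self-contained argument, one must also check that the constant in the decay is genuinely independent of $t$ and of the drift $u$ (which is exactly where $\udiv u = 0$ enters), so that the final constant $A_1$ depends on $\|\bar{\rho}_0\|_\infty$ alone, as claimed.
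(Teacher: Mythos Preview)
Your proposal is correct and follows essentially the same approach as the paper: a near-field/far-field splitting of the Biot--Savart kernel to bound $\|K\ast\bar\rho_t\|_\infty$ in terms of $\|\bar\rho_t\|_1$ and $\|\bar\rho_t\|_\infty$, the maximum principle for short times, and the cited literature \cite{ben1994global,kato1994navier} for the long-time decay. The only cosmetic difference is that the paper fixes the splitting radius at $1$ and cites the $t^{-1/2}$ decay of $u_t$ directly from \cite{kato1994navier}, whereas you optimize the radius to obtain the interpolation inequality and then deduce the decay from that of $\|\bar\rho_t\|_\infty$.
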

\begin{proof} \red{Write $\mathbf{1}_E$ for the indicator of a set $E$ (i.e., $\mathbf{1}_E(x)=1$ if $x\in E$ and $0$ otherwise). } Let $K_1 \red{(x)}=K\red{(x) \cdot \mathbf{1}}_{|x|\geq 1}$ and $K_2 \red{(x)}=K \red{(x) \cdot \mathbf{1}}_{|x|\leq 1}$.  Obviously $K_1 \in L^\infty$ and $K_2 \in L^1$. Hence by H\"older's inequality:
    \begin{equation*}
        \Vert K\ast \Bar{\rho}_t \Vert_\infty \leq \Vert K_1 \Vert_\infty \cdot \Vert \Bar{\rho}_t \Vert_1 +\Vert K_2 \Vert_1 \cdot \Vert \Bar{\rho}_t \Vert_\infty \leq C <\infty.
    \end{equation*}
Since $\|\Bar{\rho}_t\|_{1}=1$ is preserved over time and $\|\Bar{\rho}_t \|_{\infty} \leq \|\bar{\rho}_0 \|_{\infty}$, the above $C$ is uniform in time. The long-time decay rate is given in \cite[\red{(0.3)}]{kato1994navier}. \red{Therefore, there exists a constant $A_1$ satisfying \eqref{ureg}.}
\end{proof}

Notice that assumptions (\ref{initialbound1})-(\ref{initialbound3}) yield that $\bar{\rho}_0 \in W^{2,1} \cap W^{2,\infty}(\mathbb{R}^2)$.
\begin{lemma}\label{allsobolev}
    Given the initial data $\bar{\rho}_0 \in W^{2,1} \cap W^{2,\infty}(\mathbb{R}^2)$, one has that for any $t \in [0,\infty)$, the vorticity $\Bar{\rho}_t$ and the velocity field $u_t$ belong to $W^{2,1} \cap W^{2,\infty}(\R^2)$ with
    \begin{equation}\label{rhosobolev}
        \| \nabla^k \Bar{\rho} \|_{\infty} \leq \frac{\red{A_2}}{(t \vee 1)^{1+\frac{k}{2}}},
    \end{equation}
    \begin{equation}\label{usobolev}
        \| \nabla^k u \|_{\infty} \leq \frac{\red{A_2}}{(t \vee 1)^{\frac{1}{2}+\frac{k}{2}}},
    \end{equation}
    where $k=1,2$. \red{Here the constant $A_2$ depends only on $\|\bar\rho_0\|_{W^{2,\infty}}$.}
\end{lemma}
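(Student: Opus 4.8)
The plan is to derive the higher-order bounds \eqref{rhosobolev} and \eqref{usobolev} by differentiating the vorticity equation \eqref{vor}, applying the parabolic maximum principle to the resulting equations for $\nabla \bar\rho$ and $\nabla^2 \bar\rho$, and then bootstrapping on the velocity field through the Biot-Savart kernel. First I would establish the short-time part of \eqref{rhosobolev}, i.e. the bound on $[0,1]$, where $t\vee 1 = 1$ and we only need $\|\nabla^k\bar\rho_t\|_\infty \le A_2$ uniformly. Since $\bar\rho_0 \in W^{2,\infty}$ and the equation is a perturbed heat equation, writing $\partial_t \bar\rho = \Delta\bar\rho - u\cdot\nabla\bar\rho$ with $u = K\ast\bar\rho$ bounded by Lemma \ref{kconw}, one differentiates once to get $\partial_t(\partial_i\bar\rho) = \Delta(\partial_i\bar\rho) - u\cdot\nabla(\partial_i\bar\rho) - (\partial_i u)\cdot\nabla\bar\rho$; the last term is the source, and since $\partial_i u = \partial_i K \ast \bar\rho$ can be controlled in $L^\infty$ via a Calderón–Zygmund/Biot–Savart estimate combined with the $L^1\cap L^\infty$ bound on $\bar\rho_t$ (and a logarithmic correction handled by interpolation with $\|\nabla\bar\rho\|_\infty$ itself), one closes a Grönwall-type inequality for $\|\nabla\bar\rho_t\|_\infty$ on $[0,1]$. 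Differentiating once more and repeating handles $\|\nabla^2\bar\rho_t\|_\infty$ on $[0,1]$, using the already-obtained bound on $\|\nabla\bar\rho_t\|_\infty$ to control the lower-order source terms. The corresponding $L^1$ bounds follow the same way using that $\|\bar\rho_t\|_1$ is conserved and $K\ast$ is also bounded on the relevant spaces after splitting $K = K_1 + K_2$ as in Lemma \ref{kconw}.

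For the long-time decay, i.e. the factor $(t\vee 1)^{-1-k/2}$ for $t\ge 1$, I would invoke the known large-time asymptotics of the 2D Navier-Stokes vorticity equation: from \cite{kato1994navier, ben1994global} one has the decay rates $\|\nabla^k\bar\rho_t\|_p \lesssim t^{-1-k/2 + 1/p}$ (so $\|\bar\rho_t\|_\infty \lesssim t^{-1}$, $\|\nabla\bar\rho_t\|_\infty \lesssim t^{-3/2}$, $\|\nabla^2\bar\rho_t\|_\infty \lesssim t^{-2}$) for solutions with $L^1$ initial data. Alternatively, one can re-derive these rates self-containedly by running the maximum-principle argument above but starting from time $t=1$, where by the short-time step $\bar\rho_1 \in W^{2,\infty}$ with controlled norms, and then using the smoothing of the heat semigroup together with the scaling structure: the parabolic rescaling $\bar\rho^\lambda(x,t) = \lambda^2\bar\rho(\lambda x, \lambda^2 t)$ preserves the equation and the $L^1$ norm, which forces the stated homogeneity in $t$. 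Either route gives \eqref{rhosobolev}; combining the short-time and long-time regimes and absorbing constants yields a single $A_2$ depending only on $\|\bar\rho_0\|_{W^{2,\infty}}$ (and $A_1$, hence on $\|\bar\rho_0\|_\infty$, which is dominated).

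Finally, the velocity bounds \eqref{usobolev} follow from \eqref{rhosobolev} by the Biot-Savart law: $\nabla^k u = \nabla^k K \ast \bar\rho = \nabla^{k-1}K \ast \nabla\bar\rho$ for $k\ge 1$, so splitting $K$ into its near- and far-field parts one estimates $\|\nabla^k u_t\|_\infty \lesssim \|\nabla\bar\rho_t\|_\infty \cdot \||x|^{-1}\mathbf{1}_{|x|\le1}\|_1 + \|\nabla^{k-1}\bar\rho_t\|_\infty$-type contributions plus far-field terms bounded by $\|\bar\rho_t\|_1$; matching the time-decay exponents $1/2 + k/2$ comes precisely from the $1 + k/2$ exponent on $\bar\rho$ improved by one power of $(t\vee1)^{1/2}$ through the gain of an $L^1$ convolution. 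I expect the main obstacle to be the borderline case $k=1$ for the velocity gradient $\nabla u = \nabla K \ast \bar\rho$: since $\nabla K$ is a Calderón–Zygmund kernel (not integrable near the origin), the naive splitting produces a logarithmically divergent term, and one must instead use the standard estimate $\|\nabla K \ast f\|_\infty \lesssim \|f\|_{L^1}^{1/2}\|\nabla f\|_{L^\infty}^{1/2}$ or $\|f\|_\infty(1 + \log(\|\nabla f\|_\infty/\|f\|_\infty))$, then feed in the decay rates for $\bar\rho$ and check that the logarithm is harmless after the time-scaling; this is the one place where a little care beyond routine convolution estimates is needed, and the same subtlety recurs when controlling the source term $(\partial_i u)\cdot\nabla\bar\rho$ in the short-time maximum-principle step.
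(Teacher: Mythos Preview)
Your overall strategy matches the paper's: differentiate the vorticity equation, use the parabolic maximum principle with a Gr\"onwall argument for the short-time bounds, and invoke \cite{ben1994global,kato1994navier} for the long-time decay rates. The one place you diverge is in how to handle $\nabla u$. You anticipate a Calder\'on--Zygmund obstacle from writing $\nabla u = \nabla K \ast \bar\rho$ and propose log-Lipschitz or interpolation estimates to get around it. The paper sidesteps this entirely: it puts the derivative on the vorticity, $\partial_i u = K \ast \partial_i\bar\rho$, and then uses the same near/far splitting $K=K_1+K_2$ as in Lemma~\ref{kconw} to get $\|K\ast\nabla\bar\rho\|_\infty \le \|K_1\|_\infty\|\nabla\bar\rho\|_1 + \|K_2\|_1\|\nabla\bar\rho\|_\infty$. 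The price is that you must control $\|\nabla\bar\rho\|_1$ and $\|\nabla\bar\rho\|_\infty$ \emph{simultaneously}; the paper derives coupled differential inequalities for both (each right-hand side involves the sum $\|\nabla\bar\rho\|_1+\|\nabla\bar\rho\|_\infty$) and closes by Gr\"onwall on the sum. The same device handles $\nabla^2 u = K\ast\nabla^2\bar\rho$. This is more elementary than your route and removes the logarithmic subtlety you flagged; your approach would also work but with unnecessary extra effort.
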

\begin{proof}
    According to Theorem B in \cite{ben1994global} and \cite{kato1994navier}, once the initial data $\bar{\rho}_0 \in L^1(\mathbb{R}^2)$, the Sobolev regularity and the long-time decay rate holds. Hence we only need to obtain \red{a} uniform Sobolev bound for a short time $t \in [0,t_0]$ for some small $t_0>0$.

    Denote by $\mathcal{L}$ the linear operator for a fixed $\Bar{\rho}$ that $\mathcal{L}=(K \ast \Bar{\rho}) \cdot \nabla$.
    We \red{derive the following equation solved by $\nabla \bar \rho$ from \eqref{vor}:}
    \begin{equation*}
        \partial_t(\nabla \Bar{\rho}) +\mathcal{L}(\nabla \Bar{\rho})= \Delta (\nabla \Bar{\rho}) -(K \ast \nabla \Bar{\rho}) \cdot \nabla \Bar{\rho}.
    \end{equation*}
    From the same decomposition of $K = K_1 + K_2$ as in Lemma \ref{kconw} and by H\"older's inequality:
    \begin{align*}
        \Vert K \ast \nabla \Bar{\rho} \Vert_\infty &\leq \Vert K_1 \ast \nabla \Bar{\rho} \Vert_\infty+\Vert K_2 \ast \nabla \Bar{\rho} \Vert_\infty \\
        &\leq \Vert K_1 \Vert_\infty \cdot \Vert \nabla \Bar{\rho} \Vert_1 +\Vert K_2 \Vert_1 \cdot \Vert \nabla \Bar{\rho} \Vert_\infty\\
        &\leq C(\Vert \nabla \Bar{\rho} \Vert_1+\Vert \nabla \Bar{\rho} \Vert_\infty).
    \end{align*}
    Hence by the  parabolic maximum principle, the evolution of $\Vert \nabla \Bar{\rho} \Vert_\infty$ is given by 
    \begin{equation*}
        \frac{\ud}{\ud t} \Vert \nabla \Bar{\rho} \Vert_\infty \leq C \Vert K \ast \nabla \Bar{\rho} \Vert_\infty \|\nabla \Bar{\rho}\|_{\infty} \leq C \Vert \nabla \Bar{\rho} \Vert_\infty (\Vert \nabla \Bar{\rho} \Vert_1+\Vert \nabla \Bar{\rho} \Vert_\infty),
    \end{equation*}
    while the evolution of $\Vert \nabla \Bar{\rho} \Vert_1$ is given by
    \begin{equation*}
        \frac{\ud}{\ud t} \Vert \nabla \Bar{\rho} \Vert_1 \leq C \Vert K \ast \nabla \Bar{\rho} \Vert_\infty \Vert \nabla \Bar{\rho} \Vert_1 \leq C \Vert \nabla \Bar{\rho} \Vert_1(\Vert \nabla \Bar{\rho} \Vert_1+\Vert \nabla \Bar{\rho} \Vert_\infty).
    \end{equation*}
    Summing up the two inequalities we obtain a closed inequality for $\Vert \nabla \Bar{\rho} \Vert_1+\Vert \nabla \Bar{\rho} \Vert_\infty$. Hence by Gr\"onwall's inequality these two norms for $\nabla \Bar{\rho}$ are finite for $t \leq t_0$ for some $t_0>0$. As a direct result $K \ast \nabla \Bar{\rho}=\nabla u \in L^\infty.$
    
    As for the second derivatives, we propagate $\nabla^2 \Bar{\rho}$ from (\ref{vor}) similarly that
    \begin{equation*}
        \partial_t(\nabla^2 \Bar{\rho})+\mathcal{L}(\nabla^2 \Bar{\rho})=\Delta(\nabla^2 \Bar{\rho})-2(K \ast \nabla \Bar{\rho}) \cdot \nabla^2 \Bar{\rho}-(K \ast \nabla^2 \Bar{\rho})\cdot \nabla \Bar{\rho}.
    \end{equation*}
    Notice that by H\"older's inequality:
    \begin{align*}
        \Vert K \ast \nabla^2 \Bar{\rho} \Vert_\infty &\leq \Vert K_1 \ast \nabla^2 \Bar{\rho} \Vert_\infty+\Vert K_2 \ast \nabla^2 \Bar{\rho} \Vert_\infty\\
        &\leq \Vert K_1 \Vert_\infty \cdot \Vert \nabla^2 \Bar{\rho} \Vert_1+ \Vert K_2 \Vert_1 \cdot \Vert \nabla^2 \Bar{\rho} \Vert_\infty\\
        &\leq C(\Vert \nabla^2 \Bar{\rho} \Vert_1+\Vert \nabla^2 \Bar{\rho} \Vert_\infty).
    \end{align*}
    Hence by the parabolic maximum principle, the evolution of $\Vert \nabla^2 \Bar{\rho} \Vert_\infty$ is given by
    \begin{equation*}
        \frac{\ud}{\ud t} \Vert \nabla^2 \Bar{\rho} \Vert_\infty \leq C \Vert 2(K \ast \nabla \Bar{\rho})\nabla^2 \Bar{\rho}+(K \ast \nabla^2 \Bar{\rho})\cdot \nabla \Bar{\rho} \Vert_\infty \leq C (\Vert \nabla^2 \Bar{\rho} \Vert_1+\Vert \nabla^2 \Bar{\rho} \Vert_\infty),
    \end{equation*}
    while the evolution of $\Vert \nabla^2 \Bar{\rho} \Vert_1$ is given by
    \begin{equation*}
        \frac{\ud}{\ud t} \Vert \nabla^2 \Bar{\rho} \Vert_1 \leq C \Vert 2(K \ast \nabla \Bar{\rho})\nabla^2 \Bar{\rho}+(K \ast \nabla^2 \Bar{\rho})\cdot \nabla \Bar{\rho} \Vert_1 \leq C (\Vert \nabla^2 \Bar{\rho} \Vert_1+\Vert \nabla^2 \Bar{\rho} \Vert_\infty).
    \end{equation*}
    Summing up the two inequalities we obtain a closed inequality for $\Vert \nabla^2 \Bar{\rho} \Vert_1+\Vert \nabla^2 \Bar{\rho} \Vert_\infty$. Hence by Gr\"onwall's inequality these two norms for $\nabla^2 \Bar{\rho}$ are finite for $t \leq t_0$ for some $t_0>0$. As a direct result $K \ast \nabla^2 \Bar{\rho}=\nabla^2 u \in L^\infty.$ \red{Therefore, there exists a constant $A_2$ satisfying \eqref{rhosobolev} and \eqref{usobolev}.}
\end{proof}

The above results show that $\Bar{\rho} \in W^{2,\infty}$ with a uniform Sobolev norm in $[0,\infty)$ that only depends on the norm of the initial data. Finally we conclude that
\begin{lemma}\label{kconwt}
    Under the same assumptions, one has for any $t \in [0,\infty)$, $K \ast \partial_t \Bar{\rho} \in L^\infty(\mathbb{R}^2)$ with
    \begin{equation}\label{partialtu}
        \| K \ast \partial_t \bar\rho \|_{\infty} \leq \frac{\red{A_3}}{(t \vee 1)^\frac{3}{2}}.
    \end{equation}
    \red{Here the constant $A_3$ depends only on $\|\bar\rho_0\|_{W^{2,\infty}}$.}
\end{lemma}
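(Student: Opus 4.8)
\emph{Proof plan.} Since $\udiv u=0$, the equation \eqref{vor} may be rewritten as $\partial_t\bar\rho=\Delta\bar\rho-\udiv(\bar\rho u)$, and convolving with $K$ gives
\[
K\ast\partial_t\bar\rho=\Delta(K\ast\bar\rho)-K\ast\udiv(\bar\rho u)=\Delta u-K\ast\udiv(\bar\rho u).
\]
The first term is immediate from Lemma \ref{allsobolev}: $\|\Delta u\|_\infty=\|\nabla^2 u\|_\infty\le A_2(t\vee 1)^{-3/2}$ by \eqref{usobolev} with $k=2$. So the lemma reduces to the bound $\|K\ast\udiv(\bar\rho u)\|_\infty\lesssim(t\vee 1)^{-3/2}$. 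The key observation is that the crude decomposition $K=K_1+K_2$ used in Lemma \ref{kconw} is \emph{not} enough here: it would only produce $\|K_1\|_\infty\|\udiv(\bar\rho u)\|_{L^1}\lesssim(t\vee1)^{-1}$, because the mass factor $\|\bar\rho_t\|_{L^1}=1$ carried by the term $\bar\rho\,\nabla u$ does not decay, so the genuine cancellation in $K$ must be exploited.

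The plan is therefore to integrate by parts and transfer the divergence onto the kernel: componentwise $[K\ast\udiv(\bar\rho u)]_i=\sum_j(\partial_j K_i)\ast(\bar\rho u_j)$, where $\partial_j K_i$ is read as a distribution. Since $K_1=-\partial_2\Phi$, $K_2=\partial_1\Phi$ with $\Phi=\tfrac1{2\pi}\log|\cdot|$ the fundamental solution of the Laplacian, each $\partial_j K_i$ is a second derivative of $\Phi$, hence a principal-value Calder\'on--Zygmund kernel, homogeneous of degree $-2$ with zero average on circles, plus at most a constant multiple of $\delta_0$. The $\delta_0$-contribution is a bounded multiple of $(\bar\rho u)(x)$, controlled by $\|\bar\rho u\|_\infty\le\|\bar\rho_t\|_\infty\|u\|_\infty\lesssim(t\vee1)^{-3/2}$, using \eqref{ureg} and the $L^1$--$L^\infty$ smoothing bound $\|\bar\rho_t\|_\infty\lesssim(t\vee1)^{-1}$ (the $k=0$ analogue of \eqref{rhosobolev}; see \cite{ben1994global,kato1994navier}). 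For the principal-value part I would fix $x$ and split the $y$-integral at the parabolic radius $r=\sqrt{t\vee1}$. On $\{|x-y|<r\}$, subtract the value $(\bar\rho u_j)(x)$ (legitimate by the vanishing mean), use $|\partial_j K_i(z)|\lesssim|z|^{-2}$ together with the Lipschitz bound $\|\nabla(\bar\rho u)\|_\infty\le\|\nabla\bar\rho\|_\infty\|u\|_\infty+\|\bar\rho_t\|_\infty\|\nabla u\|_\infty\lesssim(t\vee1)^{-2}$ (from \eqref{ureg}, \eqref{rhosobolev}, \eqref{usobolev}), which gives a contribution $\lesssim r\,(t\vee1)^{-2}$. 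On $\{|x-y|\ge r\}$ there is no singularity, and H\"older's inequality with $\big\||z|^{-2}\mathbf 1_{|z|\ge r}\big\|_{L^2}\lesssim r^{-1}$ and $\|\bar\rho u\|_{L^2}\le\|u\|_\infty\|\bar\rho_t\|_{L^1}^{1/2}\|\bar\rho_t\|_\infty^{1/2}\lesssim(t\vee1)^{-1}$ gives a contribution $\lesssim r^{-1}(t\vee1)^{-1}$. With $r=\sqrt{t\vee1}$ both are $\lesssim(t\vee1)^{-3/2}$; summing over $i,j$ and combining with the $\Delta u$ term yields the claim. The constant so produced is built only from $A_1,A_2$ and universal Calder\'on--Zygmund constants, hence depends only on $\|\bar\rho_0\|_{W^{2,\infty}}$, and we call it $A_3$.

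The step I expect to be the main obstacle is precisely the treatment of the Calder\'on--Zygmund kernel $\partial_j K_i$: as it fails to be in $L^1$ near the origin, none of the product estimates that sufficed for Lemmas \ref{kconw}--\ref{allsobolev} apply, and one must genuinely use the cancellation---equivalently, the fact that $\int\udiv(\bar\rho u)=0$, so $K\ast\udiv(\bar\rho u)$ only feels increments of $K$---while performing the split at the correct parabolic scale $r=\sqrt{t\vee1}$ and keeping track of the $\delta_0$-part of $\partial_j K_i$. The remaining points (justifying the integration by parts and the bound $\|\bar\rho_t\|_\infty\lesssim(t\vee1)^{-1}$) are routine given the regularity already recorded in this section and the references \cite{ben1994global,kato1994navier}.
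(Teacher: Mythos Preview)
Your argument is correct, but it is more elaborate than the paper's. The paper does not attempt to extract the $t^{-3/2}$ decay from scratch: it writes $\partial_t\bar\rho=\Delta\bar\rho-(K\ast\bar\rho)\cdot\nabla\bar\rho$, bounds $\|K\ast\Delta\bar\rho\|_\infty=\|\Delta u\|_\infty$ via Lemma~\ref{allsobolev} exactly as you do, and then for the nonlinear term simply applies the crude split $K=K_1+K_2$ to obtain the \emph{uniform-in-time} bound $\|K\ast((K\ast\bar\rho)\cdot\nabla\bar\rho)\|_\infty\le C(\|\nabla\bar\rho\|_1\|u\|_\infty+\|\nabla\bar\rho\|_\infty\|u\|_\infty)\le C$; the $(t\vee1)^{-3/2}$ rate is then quoted directly from \cite[(0.5)]{kato1994navier}, since $K\ast\partial_t\bar\rho=\partial_t u$. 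So your observation that the $K_1+K_2$ split only yields $t^{-1}$ for large time is correct but beside the point for the paper's strategy. Your Calder\'on--Zygmund argument with the parabolic-scale splitting at $r=\sqrt{t\vee1}$ is a legitimate and self-contained alternative that reduces everything to the decay rates already recorded in Lemmas~\ref{kconw}--\ref{allsobolev} (plus the standard $\|\bar\rho_t\|_\infty\lesssim(t\vee1)^{-1}$); this is more informative but also noticeably longer. One minor remark: your stated reason for the crude split failing---the mass factor carried by ``$\bar\rho\,\nabla u$''---is slightly off, since $\udiv(\bar\rho u)=u\cdot\nabla\bar\rho$ contains no such term once $\udiv u=0$ is used; the actual bottleneck is $\|u\cdot\nabla\bar\rho\|_{L^1}\le\|u\|_\infty\|\nabla\bar\rho\|_{L^1}\lesssim t^{-1}$.
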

\begin{proof}
    Due to the Sobolev bounds obtained in Lemma \ref{allsobolev}, by (\ref{vor})
    \begin{align*}
        \Vert K \ast \partial_t \Bar{\rho} \Vert_\infty &\leq \Vert K \ast \Delta \Bar{\rho} \Vert_\infty + \Vert K \ast ((K \ast \Bar{\rho}) \cdot \nabla \Bar{\rho}) \Vert_\infty \\
        &\leq C+C \Vert K_1 \Vert_\infty \cdot \Vert (K \ast \Bar{\rho})\cdot \nabla \Bar{\rho} \Vert_1+C \Vert K_2 \Vert_1 \cdot \Vert (K\ast \Bar{\rho})\cdot \nabla \Bar{\rho} \Vert_\infty\\
        &\leq C(1+\Vert \nabla \Bar{\rho} \Vert_1 \cdot \Vert K \ast \Bar{\rho} \Vert_\infty+\Vert \nabla \Bar{\rho} \Vert_\infty \cdot \Vert K \ast \Bar{\rho} \Vert_\infty)\\
        &\leq C.
    \end{align*}
    The long-time decay rate is again given in \cite[\red{(0.5)}]{kato1994navier}. \red{Therefore, there exists a constant $A_3$ satisfying \eqref{partialtu}.} This completes the proof. 
\end{proof}
We should emphasize that all the results above are quite rough and only used to prepare for the following Li-Yau type estimates and Hamilton type estimates, which will give more detailed regularity results on the PDE. \red{These results are essentially contained in the literature of parabolic equations, see for instance \cite{giga2010nonlinear}. We collect them in this section for completeness and claim no originality of these results.}

\section{Li-Yau Type Gradient Estimates and Parabolic Harnack Inequality}
\label{section3}

In their celebrated paper \cite{Li1986OnTP}, Li and Yau derived the logarithmic gradient estimates of the solution of the heat equation, which led to a parabolic Harnack-type inequality. Now we are able to obtain these results for the vorticity function of the 2D Navier-Stokes equation. These results are enough to show that for any fixed time $t$, the solution $\Bar{\rho}(\cdot,t)$ satisfies a Gaussian-type decay from below on the whole space.

\begin{thm}[Li-Yau type Gradient Estimate]\label{liyau}
    Let $f=\log \Bar{\rho}$, where $\Bar{\rho}$ solves the 2D Navier-Stokes equation \eqref{vor}. Then \red{there exists a constant $A$ such that,} for $|x| \geq 2$, the following gradient estimate holds
    \begin{equation*}
        |\nabla f|^2-(1+|x|^{-2})\partial_t f \leq \red{A}(1+|x|^2),
    \end{equation*}
    while for $|x| \leq 2$ we have the following estimate
    \begin{equation*}
        |\nabla f|^2-\frac{5}{4}\partial_t f \leq \red{A}(1+|x|^2).
    \end{equation*}
    \red{Here the constant $A$ depends only on $\|\bar\rho_0\|_{W^{2,\infty}}$.}
\end{thm}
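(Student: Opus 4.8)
\emph{Proof proposal.} The plan is to run a Li--Yau/Bochner computation combined with the parabolic maximum principle, using a spatially varying multiplier in front of $\partial_t f$. First I would pass to the logarithm: since $\bar\rho$ is smooth and positive for $t>0$, the function $f=\log\bar\rho$ solves the quasilinear equation
\begin{equation*}
\partial_t f + u\cdot\nabla f = \Delta f + |\nabla f|^2 , \qquad u = K\ast\bar\rho .
\end{equation*}
I would then fix the weight $\phi(x) := \min\{\,5/4,\ 1+|x|^{-2}\,\}$, which equals $1+|x|^{-2}$ on $\{|x|\ge 2\}$, equals $5/4$ on $\{|x|\le 2\}$, lies in $(1,5/4]$ everywhere, tends to $1$ at infinity, and satisfies $|\nabla\phi|,\,|\Delta\phi|\lesssim |x|^{-3}$ for $|x|\ge 2$ (the mild concave corner on $\{|x|=2\}$ is compatible with the maximum principle, or one smooths it out). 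Setting $P := |\nabla f|^2 - \phi\,\partial_t f$ and, for a large constant $A$ to be chosen, $G := P - A(1+|x|^2)$, the goal becomes $G\le 0$ on $\mathbb{R}^2\times[0,\infty)$, which is precisely the asserted two-region estimate once one unpacks $\phi$.

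Next I would carry out the Bochner step. Writing $\mathcal{L} := \partial_t - \Delta - 2\nabla f\cdot\nabla + u\cdot\nabla$ for the natural linearized operator, the identity $\Delta|\nabla f|^2 = 2|\nabla^2 f|^2 + 2\nabla f\cdot\nabla\Delta f$ together with the $f$-equation gives
\begin{equation*}
\mathcal{L}|\nabla f|^2 = -2|\nabla^2 f|^2 - 2(\nabla f)^{\mathsf T}(\nabla u)\nabla f , \qquad \mathcal{L}(\partial_t f) = -(\partial_t u)\cdot\nabla f ,
\end{equation*}
and hence an explicit expression for $\mathcal{L}G$ in which the only favorable term is $-2|\nabla^2 f|^2\le -(\Delta f)^2$, while the remaining terms are: (i) drift terms controlled by $\|\nabla u\|_\infty$, $\|u\|_\infty$, $\|\partial_t u\|_\infty=\|K\ast\partial_t\bar\rho\|_\infty$, all bounded uniformly in $t$ by Lemmas \ref{kconw}, \ref{allsobolev}, \ref{kconwt}; (ii) cross terms carrying $\nabla\phi$, $\Delta\phi$, which are small at infinity; and (iii) the barrier terms $4A+4A\,\nabla f\cdot x-2A\,u\cdot x$ coming from $\mathcal{L}(1+|x|^2)=-4-4\nabla f\cdot x+2u\cdot x$. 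Using $\Delta f=\partial_t f-|\nabla f|^2+u\cdot\nabla f$ one rewrites $-(\Delta f)^2$ as a negative quadratic form in $(\partial_t f,\,|\nabla f|^2)$ modulo $O(\|u\|_\infty)$ errors, which is the engine that will dominate.

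I would then invoke the maximum principle. On the initial slice, \eqref{initialbound1}--\eqref{initialbound2} and the $f$-equation at $t=0$ give $\partial_t f|_{t=0}=\Delta\log\bar\rho_0+|\nabla\log\bar\rho_0|^2-u_0\cdot\nabla\log\bar\rho_0=O(1+|x|^2)$, hence $G(\cdot,0)\le 0$ once $A$ is large (depending only on $C_1,C_2,C_3,\|\bar\rho_0\|_{W^{2,\infty}}$). If $G$ became positive, one looks at the first time $t_1$ and a point $x_1$ realizing this, where $\nabla G=0$, $\Delta G\le 0$, $\partial_t G\ge 0$, so $\mathcal{L}G(x_1,t_1)\ge 0$. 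Substituting the relations $\nabla|\nabla f|^2=\phi\,\nabla\partial_t f+(\partial_t f)\nabla\phi+2Ax$ and $|\nabla f|^2=\phi\,\partial_t f+A(1+|x|^2)$ into $\mathcal{L}G$, bounding $|\nabla|\nabla f|^2|\le 2|\nabla^2 f|\,|\nabla f|$ and absorbing this and the drift/weight cross terms into the good term $-2|\nabla^2 f|^2$ via Young's inequality, one is reduced to a scalar inequality in $(|\nabla f|^2,\partial_t f,|x|)$ that, after the standard Li--Yau case split according to the sign and size of $\partial_t f$ relative to $|\nabla f|^2$, is violated for $A$ large. This contradiction forces $G\le 0$, which is the theorem.

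The main obstacle I anticipate is twofold. First, making the maximum principle genuinely valid on the noncompact $\mathbb{R}^2$: a priori $|\nabla f|^2$ itself may grow like $(1+|x|^2)^2$, the same order as the quantity the barrier must beat, so $G$ need not tend to $-\infty$ and the maximum need not be attained. I would handle this by first establishing a crude polynomial-in-$|x|$ bound on $|\nabla f|$ (with a possibly $t$-dependent constant) --- for instance via an Aronson-type Gaussian lower bound for $\bar\rho$ combined with interior gradient estimates for the quasilinear $f$-equation --- and then adding a small regularizer $\varepsilon(1+|x|^2)^m$ with $m$ large, checking that at a would-be positive maximum its contribution to $\mathcal{L}G$ is $O(\varepsilon)$ once one uses the very inequality it forces, and letting $\varepsilon\to 0$. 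Second, the bookkeeping of the cross terms generated by the $x$-dependence of $\phi$ and of the barrier, and in particular keeping careful track of signs near $\{|x|=2\}$, is the delicate computational core; this is exactly where the precise choice $\phi=1+|x|^{-2}$ (so that $\phi\to 1$ precisely where the drift decays) and the cutoff value $5/4$ enter.
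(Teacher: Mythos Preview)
Your overall Bochner-plus-maximum-principle strategy is correct, but the route differs from the paper's in one key organizational point. The paper does \emph{not} carry an $x$-dependent coefficient $\phi(x)$ in front of $\partial_t f$. Instead, for each fixed $R\ge 2$ it sets the \emph{constant} $\alpha=1+R^{-2}$, defines $F=|\nabla f|^2-\alpha\,\partial_t f$, and localizes with the standard Li--Yau cutoff $\psi(|x|/R)$ supported in $B_{2R}$. Because $\psi F$ has compact support, the parabolic maximum is automatically attained, and at that point the Bochner computation together with the substitution $\Delta f=F+(\alpha-1)\partial_t f-u\cdot\nabla f$ produces a clean quadratic inequality $\frac{C}{\alpha}(\psi F)^2-C(1+R^{-1}+R^{-2})(\psi F)-CR^4\le 0$, yielding $F\le C(1+R^2)$ on $B_R$ (stated as Proposition~\ref{liyauesti}). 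Only \emph{after} this bound is in hand does one specialize $R=|x|$ (or $R=2$ for $|x|\le 2$) to read off the pointwise statement.

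What this buys over your approach: the cutoff eliminates the noncompactness obstacle you correctly flag, so no regularizer and no external a priori bound on $|\nabla f|$ are needed (your proposed Aronson-type input is available in the literature but is exactly what the paper avoids); and keeping $\alpha$ constant removes all the cross terms $(\Delta\phi)\partial_t f$, $\nabla\phi\cdot\nabla\partial_t f$ that your computation must absorb. Conversely, your global-barrier formulation is conceptually direct and, with the bookkeeping carried out, should also close; but the paper's ``constant $\alpha$ on $B_R$, then $R=|x|$'' device is precisely the classical Li--Yau trick, and it is the cleaner path here.
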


\begin{proof}
	For any $R\geq 2$, let $\alpha=1+R^{-2}$ and we denote by $F=|\nabla f|^2-\alpha \red{\partial_t f}$. The key idea is to estimate the maximum of $F$ in $B_R \times [0,T]$, where $B_R$ is the ball centered at $0$ with radius $R$. 
	 
	From (\ref{vor}) we obtain the propagation of $f(x,t)$ easily that
	\begin{equation*}
	\partial_t f+(K \ast \Bar{\rho}) \cdot \nabla f=\Delta f+|\nabla f|^2.
	\end{equation*}
	
	Hence we estimate $\Delta F$ by direct calculations that
	\begin{align*}
	\Delta F &=2|\nabla^2 f|^2 +2\nabla f \cdot \nabla(\Delta f)-\alpha \Delta \red{\partial_t f}\\
	&\geq |\Delta f|^2 +2\nabla f \cdot \nabla (\red{\partial_t f}+(K \ast \Bar{\rho}) \cdot \nabla f-|\nabla f|^2)-\alpha\red{\partial_t}(\red{\partial_t f}+(K \ast \Bar{\rho}) \cdot \nabla f-|\nabla f|^2)\\
	&=|\Delta f|^2+\red{\partial_t F}-2\nabla f \cdot \nabla F+\red{2\nabla(K \ast \bar\rho):(\nabla f \otimes \nabla f)}+(K\ast \Bar{\rho}) \cdot \nabla F-\alpha(K\ast \red{\partial_t}\Bar{\rho})\cdot \nabla f,
	\end{align*}
	where the inequality follows from the Cauchy-Schwarz inequality.
	
	In order to apply the parabolic maximum principle in a bounded domain, we introduce the cutoff function $\psi \in C^2[0,\infty)$ as done in \cite{Li1986OnTP} such that
	\begin{equation*}
	0 \leq \psi \leq 1, \quad \psi=1 \text{ on } [0,1], \quad \psi=0 \text{ on } [2,\infty),
	\end{equation*}
	and
	\begin{equation*}
	0 \geq \frac{\psi^{\prime}}{\psi^\frac{1}{2}} \geq -C, \quad \psi^{\prime \prime} \geq -C.
	\end{equation*}
	Let $\phi(x)=\psi\red{\Big(}\frac{|x|}{R}\red{\Big)}$. Hence $\phi F$ is supported in $B_{2R} \times [0,\infty)$ and equals to $F$ in $B_R \times [0,\infty)$.
	
	Now for fixed $T>0$, we assume that $\phi F$ attains at $(x_0,t_0)$ its maximum in $\mathbb{R}^2 \times [0,T]$. Without loss of generality, we may assume that $F(x_0,t_0) \geq 0$. Notice also that if $t_0=0$ then the maximum of $F$ in $B_R \times [0,T]$ is bounded by
 \begin{equation*}
     |\nabla \log \bar\rho_0|^2-\alpha \red{\partial_t \log \bar\rho \big|_{t=0}} \leq C(1+|x|^2).
 \end{equation*}
    Hence we may assume that $t_0>0$.
	
	By the parabolic maximum principle,
	\begin{equation*}
	\Delta(\phi F)(x_0,t_0) \leq 0, \quad \partial_t(\phi F)(x_0,t_0) \geq 0, \quad \nabla (\phi F)(x_0,t_0)=0.
	\end{equation*}
	Hence following the steps in \cite{Li1986OnTP}, at $(x_0,t_0)$ we have
	\begin{align*}
	0 \geq&\, \phi \Delta(\phi F)=\phi^2 \Delta F+2\phi \nabla \phi \cdot \nabla F+\phi F \Delta \phi\\
	\geq&\, -CR^{-2} (\phi F)+2\nabla \phi \cdot \nabla (\phi F)-2F|\nabla \phi|^2+\phi^2 |\Delta f|^2+\red{\partial_t (\phi(\phi F))}-2\phi^2\nabla f \cdot \nabla F\\
	&\, +\red{2\phi^2\nabla(K \ast \bar\rho):(\nabla f \otimes \nabla f)}+\phi^2(K \ast \Bar{\rho}) \cdot \nabla F-\alpha \phi^2(K \ast \red{\partial_t}\Bar{\rho}) \cdot \nabla f\\
	\geq&  -CR^{-2}(\phi F)+\phi^2 |\Delta f|^2+2\phi F \nabla f \cdot \nabla \phi-C\phi^2 |\nabla f|^2-\phi F(K \ast \Bar{\rho}) \cdot \nabla \phi -C\alpha \phi^2|\nabla f|,
	\end{align*}
	where we make use of Lemma \ref{allsobolev} and Lemma \ref{kconwt} to bound the terms with the \red{interaction} kernel $K$.
	
	Respectively, we have from Cauchy–Schwarz inequality
	\begin{equation*}
	2(\phi F)\nabla f \cdot \nabla \phi \geq -\epsilon_1 (\phi F)^2-\alpha \epsilon_1 \phi^2F\red{\partial_t f}-C\epsilon_1^{-1}R^{-2}\phi F,
	\end{equation*}
	\begin{equation*}
	-C\phi^2|\nabla f|^2-C\alpha \phi^2|\nabla f| \geq -C\phi^2 F-C\alpha \phi^2 \red{\partial_t f}-C\alpha \phi^2 -C\alpha \phi^2 F-C\alpha^2\phi^2 \red{\partial_t f},
	\end{equation*}
        \begin{equation*}
            -\phi F(K \ast \bar\rho) \cdot \nabla \phi \geq -CR^{-1} \phi^{\frac{3}{2}} F, 
        \end{equation*}
	where we make use of Lemma \ref{kconw} to bound the term with $K$.
	
	Also, from (\ref{vor}) we have
	\begin{align*}
	\phi^2 |\Delta f|^2 =&\phi^2(\red{\partial_t f}+(K\ast \Bar{\rho})\cdot \nabla f-|\nabla f|^2)^2\\
	=& \phi^2(F+(\alpha-1)\red{\partial_t f}-(K \ast \Bar{\rho})\cdot \nabla f)^2\\
	\geq& (\phi F)^2+2(\alpha-1)\phi^2 F\red{\partial_t f}-2((K \ast \Bar{\rho}) \cdot \nabla f) \phi^2F\\
	&+(\alpha-1)^2\phi^2\red{(\partial_t f)}^2+2(\alpha-1)\phi^2\red{\partial_t f}(K \ast \Bar{\rho})\cdot \nabla f\\
	\geq& \frac{1}{2}(\phi F)^2+2(\alpha-1)\phi^2F\red{\partial_t f}-C \phi^2F-C\alpha\phi^2\red{\partial_t f}+\frac{(\alpha-1)^2}{2}\phi^2\red{(\partial_t f)}^2.
	\end{align*}
	
	In order to cancel the term $\phi^2Ff_t$, we choose $\epsilon_1=\frac{2(\alpha-1)}{\alpha}$ and sum up all the terms.
	\begin{align*}
	0 \geq &\Big(\frac{2}{\alpha}-\frac{3}{2}\Big)(\phi F)^2-C\red{\Big(}1+R^{-1}+R^{-2}+\frac{\alpha}{2(\alpha-1)}R^{-2}\red{\Big)}(\phi F)\\
        &+\red{\Big(}\frac{(\alpha-1)^2}{2}\phi^2\red{(\partial_t f)}^2-C(\alpha+\alpha^2)\phi^2\red{\partial_t f}-C\alpha \phi^2\red{\Big)}\\
	\geq &\frac{C}{\alpha}(\phi F)^2-C(1+R^{-1}+R^{-2})(\phi F)-CR^4.
	\end{align*}
	By elementary calculations, $ax^2-bx-c \leq 0$ yields
	\begin{equation*}
	x \leq \frac{1}{2a}(b+\sqrt{b^2+4ac}) \leq \frac{b}{a}+\sqrt{\frac{c}{a}}
	\end{equation*}
	for $a,b,c>0$. Hence we sum up that
	\begin{equation*}
	\phi F \leq C\alpha(1+R^{-1}+R^{-2})+CR^2\sqrt{\alpha} \leq C(1+R^2).
	\end{equation*}
	Notice again that $\phi \equiv 1$ when $x \in B_R$. Hence we arrive actually at the following result.
	\begin{prop}\label{liyauesti}
		For any $R\geq 2$, there holds
		\begin{equation*}
		|\nabla f|^2-(1+R^{-2}) \partial_t f \leq C(1+R^2)
		\end{equation*}
		in $B_R \times [0,T]$.
	\end{prop}
	The last step is almost trivial. For $|x| \geq 2$ we take $R=|x|$, else we take $R=2$ and notice that $1+R^2 \leq C(1+|x|^2)$. \red{Therefore, there exists a constant $A$ satisfying the final result.}
\end{proof}

It is natural to derive from the Li-Yau-type estimate a parabolic Harnack-type inequality using the path integral.
\begin{cor}[Parabolic Harnack-type Inequality]\label{harnack}
Under the same assumptions as in Theorem \ref{liyau} and assume that $R \geq 2$. Then for any $x_1, x_2\in B_R$ and $t_1<t_2$,
    \begin{equation}\label{har}
        f(x_2,t_2)-f(x_1,t_1) \geq -C\frac{|x_1-x_2|^2}{t_2-t_1}-CR^2(t_2-t_1).
    \end{equation}
\end{cor}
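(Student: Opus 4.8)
The plan is to integrate the Li-Yau inequality from Proposition \ref{liyauesti} along a straight-line path in space connecting $x_1$ to $x_2$, traversed in time from $t_1$ to $t_2$. Concretely, I would fix $R \geq 2$ and, for a curve $\gamma: [t_1, t_2] \to B_R$ with $\gamma(t_1) = x_1$ and $\gamma(t_2) = x_2$, consider the function $g(s) = f(\gamma(s), s)$. Differentiating along the path gives $\frac{\ud}{\ud s} g(s) = \partial_t f(\gamma(s), s) + \nabla f(\gamma(s), s) \cdot \dot\gamma(s)$. The idea is to use the gradient estimate to bound $\partial_t f$ from below in terms of $|\nabla f|^2$, and then absorb the $\nabla f \cdot \dot\gamma$ term by Young's inequality.

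More precisely, from Proposition \ref{liyauesti}, in $B_R \times [0,T]$ we have $\partial_t f \geq \frac{1}{1+R^{-2}}\big(|\nabla f|^2 - C(1+R^2)\big) \geq \frac{1}{2}|\nabla f|^2 - C(1+R^2)$, using $1 \leq 1+R^{-2} \leq 2$. Hence $\frac{\ud}{\ud s} g(s) \geq \frac{1}{2}|\nabla f|^2 - C(1+R^2) + \nabla f \cdot \dot\gamma$. By Young's inequality, $\nabla f \cdot \dot\gamma \geq -\frac{1}{2}|\nabla f|^2 - \frac{1}{2}|\dot\gamma|^2$, so the $|\nabla f|^2$ terms cancel and $\frac{\ud}{\ud s} g(s) \geq -\frac{1}{2}|\dot\gamma(s)|^2 - C(1+R^2)$. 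Choosing $\gamma$ to be the constant-speed straight line, $|\dot\gamma(s)| = \frac{|x_1 - x_2|}{t_2 - t_1}$, and integrating over $[t_1, t_2]$ yields
\begin{equation*}
f(x_2, t_2) - f(x_1, t_1) = \int_{t_1}^{t_2} \frac{\ud}{\ud s} g(s)\, \ud s \geq -\frac{1}{2}\frac{|x_1-x_2|^2}{t_2-t_1} - C(1+R^2)(t_2-t_1),
\end{equation*}
which, after absorbing the $1$ into $R^2$ since $R \geq 2$, gives the claimed bound \eqref{har} with a suitable constant $C$.

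One technical point to address is that the straight-line path between $x_1, x_2 \in B_R$ stays inside $B_R$ by convexity of the ball, so the Li-Yau estimate from Proposition \ref{liyauesti} applies at every point along the path; this is why the statement is phrased with both endpoints in $B_R$ rather than on the whole space. A second point is the regularity needed to justify differentiating $g(s)$ and applying the fundamental theorem of calculus: since $\bar\rho$ is smooth and strictly positive (the latter will in fact follow from this very corollary, but positivity of the smooth solution on any finite time interval and bounded region is standard parabolic theory), $f = \log\bar\rho$ is $C^1$ in space-time, so $g$ is absolutely continuous and the computation is legitimate. I do not anticipate a genuine obstacle here — the argument is the classical Li-Yau-to-Harnack path-integral trick, and the only mild care needed is tracking that the constant depends only on the allowed quantities and choosing the Young's inequality split so that the gradient terms cancel exactly.
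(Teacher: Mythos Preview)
Your proposal is correct and follows essentially the same approach as the paper: integrate the Li-Yau estimate from Proposition \ref{liyauesti} along the straight segment joining $(x_1,t_1)$ and $(x_2,t_2)$, then use Young's/Cauchy--Schwarz to cancel the $|\nabla f|^2$ term against the $\nabla f \cdot \dot\gamma$ cross term. The only cosmetic difference is that the paper parametrizes the path by $s\in[0,1]$ rather than by time, and keeps the factor $\frac{1}{1+R^{-2}}$ until the final step instead of bounding it by $\frac12$ at the outset.
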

\begin{proof}
    The proof is based on the method of proving the classical parabolic Harnack inequality, using integration along the straight segment connecting $(x_1,t_1)$ and $(x_2,t_2)$. By Proposition \ref{liyauesti},
    \begin{align*}
        f(x_2,t_2)-f(x_1,t_1) &=\int_0^1 \frac{\ud}{\ud s} f(sx_2+(1-s)x_1,st_2+(1-s)t_1) \ud s\\
        &=\int_0^1 \nabla f \cdot (x_2-x_1)+(t_2-t_1)\red{\partial_t f} \ud s\\
        &\geq \int_0^1 \nabla f \cdot (x_2-x_1)+(t_2-t_1)\frac{|\nabla f|^2-C(1+R^2)}{1+R^{-2}} \ud s\\
        &\geq -C\frac{|x_1-x_2|^2}{t_2-t_1}-CR^2(t_2-t_1), 
    \end{align*}
    where the last inequality is given by the Cauchy-Schwarz inequality, which then completes the proof. 
\end{proof}
It is direct to obtain a lower bound for the time derivative of $\log \Bar{\rho}$ from above.
\begin{cor}\label{timederibelow}
    $\partial_t \log \Bar{\rho}(x,t) \geq -C(1+|x|^2)$.
\end{cor}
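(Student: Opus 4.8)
The plan is to read off the asserted lower bound directly from the Li--Yau-type estimate of Theorem \ref{liyau}; the only input is the nonnegativity of the gradient term $|\nabla f|^2$, which may simply be discarded from the left-hand side of the inequalities there.

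I would argue according to the two regimes of Theorem \ref{liyau}. For $|x| \geq 2$, the bound $|\nabla f|^2 - (1+|x|^{-2})\,\partial_t f \leq A(1+|x|^2)$ combined with $|\nabla f|^2 \geq 0$ yields $(1+|x|^{-2})\,\partial_t f \geq -A(1+|x|^2)$. If $\partial_t f \geq 0$ the claim is trivial; otherwise, since the coefficient satisfies $1 \leq 1+|x|^{-2} \leq \tfrac54$, dividing through preserves the sign and gives $\partial_t f \geq -A(1+|x|^2)$. For $|x| \leq 2$, the bound $|\nabla f|^2 - \tfrac54 \partial_t f \leq A(1+|x|^2)$ together with $|\nabla f|^2 \geq 0$ gives $\partial_t f \geq -\tfrac45 A(1+|x|^2) \geq -A(1+|x|^2)$. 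Since $f = \log \bar\rho$, the two cases combine into $\partial_t \log \bar\rho(x,t) \geq -C(1+|x|^2)$ with $C = A$, uniformly in $t$.

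There is essentially no obstacle here: all the analytic work is already contained in the proof of Theorem \ref{liyau}, and this corollary is just a rearrangement after throwing away the favorable term $|\nabla f|^2$. The only point worth recording is that the constant $C$ may be taken equal to $A$, so it inherits the dependence on $\|\bar\rho_0\|_{W^{2,\infty}}$ alone and is independent of $t$, in accordance with the conventions of Section \ref{section2}.
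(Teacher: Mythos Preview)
Your argument is correct and essentially the same as the paper's. The paper routes the bound through the Harnack-type inequality of Corollary~\ref{harnack} (taking $x_1=x_2=x$, $R=|x|$ or $R=2$, and letting $t_2\to t$), whereas you read it off directly from Theorem~\ref{liyau} by discarding $|\nabla f|^2\ge 0$; both are immediate rearrangements of the Li--Yau estimate and yield the same constant.
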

\begin{proof}
    Let $x_1=x_2=x$ and $t_1=t$ in (\ref{har}). If $|x| \geq 2$, then we let $R=|x|$ and obtain
    \begin{equation*}
        f(x,t_2)-f(x,t) \geq -C|x|^2(t_2-t).
    \end{equation*}
    If $|x| \leq 2$,  then we let $R=2$ and obtain that 
    \begin{equation*}
        f(x,t_2)-f(x,t) \geq -C(t_2-t).
    \end{equation*}
    Letting  $t_2\rightarrow t$ gives the result.
\end{proof}

Furthermore, integrating over time will give the following Gaussian lower bound. 
\begin{cor}[Gaussian Lower Bound]\label{leastgauss}
    The solution to the vorticity formulation of the 2D Navier-Stokes equation has a Gaussian bound from below:
    \begin{equation*}
        \Bar{\rho}(x,t) \geq \Bar{\rho}(x,0)e^{-Ct(1+|x|^2)}.
    \end{equation*}
    \red{Here the constant $C$ depends only on $\|\bar\rho_0\|_{W^{2,\infty}}$.}
\end{cor}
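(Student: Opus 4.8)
The plan is to obtain the Gaussian lower bound directly by integrating the pointwise lower bound on $\partial_t \log \bar\rho$ furnished by Corollary \ref{timederibelow}. Fix a point $x \in \mathbb{R}^2$ and regard $t \mapsto f(x,t) = \log \bar\rho(x,t)$ as a function of time alone. Since $\bar\rho$ is smooth and strictly positive for $t>0$ (by the smoothing of the 2D Navier--Stokes equation recalled in Section \ref{section2}, together with the lower bound already implicit in the Harnack estimate), $f(x,\cdot)$ is differentiable on $(0,\infty)$ and continuous up to $t=0$ since $\bar\rho_0$ itself is bounded below on compact sets (indeed $\bar\rho_0 \in W^{2,\infty}$ with, by \eqref{initialbound1}, $|\nabla \log \bar\rho_0|$ locally bounded, so $\bar\rho_0>0$). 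Then Corollary \ref{timederibelow} gives $\partial_s f(x,s) \geq -C(1+|x|^2)$ for all $s \in (0,t]$, with $C$ depending only on $\|\bar\rho_0\|_{W^{2,\infty}}$.

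The key step is then just the fundamental theorem of calculus:
\begin{equation*}
    f(x,t) - f(x,0) = \int_0^t \partial_s f(x,s) \,\mathrm{d} s \geq \int_0^t -C(1+|x|^2) \,\mathrm{d} s = -Ct(1+|x|^2).
\end{equation*}
Exponentiating both sides yields
\begin{equation*}
    \bar\rho(x,t) = e^{f(x,t)} \geq e^{f(x,0)} e^{-Ct(1+|x|^2)} = \bar\rho(x,0) e^{-Ct(1+|x|^2)},
\end{equation*}
which is precisely the claimed bound, with the same constant $C$ inherited from Corollary \ref{timederibelow}, hence depending only on $\|\bar\rho_0\|_{W^{2,\infty}}$.

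The only subtle point — and the one I would be careful to address — is the integrability of $\partial_s f(x,s)$ near $s=0$ and the validity of the fundamental theorem of calculus up to the initial time. The bound from Corollary \ref{timederibelow} is uniform in $s$, so there is no singularity issue at $s=0$ once we know $f(x,\cdot)$ is absolutely continuous on $[0,t]$; this follows from the parabolic smoothing (the solution is $C^\infty$ for $s>0$) combined with the continuity of $\bar\rho$ down to $s=0$ guaranteed by the well-posedness theory cited in Section \ref{section2}. If one prefers to avoid any regularity discussion at $s=0$, one can instead integrate from $\delta$ to $t$, obtaining $f(x,t) \geq f(x,\delta) - Ct(1+|x|^2)$, and then let $\delta \to 0^+$ using $f(x,\delta) \to f(x,0)$ by continuity of $\bar\rho(x,\cdot)$ at the origin together with positivity of $\bar\rho_0$. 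Either way the argument is short; no further PDE input beyond Corollary \ref{timederibelow} is needed.
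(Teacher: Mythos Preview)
Your proof is correct and follows exactly the approach the paper intends: the paper introduces this corollary with the single sentence ``Furthermore, integrating over time will give the following Gaussian lower bound,'' and provides no further argument. Your integration of the pointwise bound from Corollary~\ref{timederibelow} via the fundamental theorem of calculus, together with the brief regularity remarks at $s=0$, is precisely what is meant and in fact supplies more detail than the paper does.
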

Corollary \ref{leastgauss} exactly shows that the solution to the vorticity formulation must decay with at most Gaussian-type rate, given a fixed time, once the initial data satisfies this condition. This suggests that $\Bar{\rho}(x,t)$ should satisfy some regularity results similar to the Gaussian. 
This may agree to the idea in Remark \ref{longtime}, say $\Bar{\rho}_t$ could satisfy some logarithmic Sobolev inequality. \red{After the release of the preprint of this article, the idea of establishing an LSI for $\bar{\rho}_t$ by perturbing around the Gaussian was employed in \cite{rosenzweig2024relative}, which is the whole-space analogue of establishing an LSI by perturbing around the uniform measure, as done on the torus in \cite{guillin2024uniform}.
}


\red{Notice that by the mean-value inequality, the linear growth of $\nabla \log \bar\rho_0$ implies a Gaussian lower bound for $\bar\rho_0(x)$. Combining with the above corollary, we obtain the Gaussian lower bound of the vorticity function $\bar\rho(t,x)$.}
\begin{cor}[Gaussian-type decay I]\label{leastgauss2} Suppose that the initial vorticity $\bar{\rho}_0$ to the 2D Navier-Stokes equation \eqref{limit} satisfies \red{the growth condition \eqref{initialbound1}:
\begin{equation*}
    |\nabla \log \bar\rho_0(x)|^2 \leq C_1(1+|x|^2),
\end{equation*}
}
    then \red{we have a Gaussian} lower bound for any time $t$, i.e. \red{there exists a constant $c_1$ satisfying}
    \begin{equation*}
        \Bar{\rho}(x,t) \geq e^{-\red{c_1}(1+t)(1+|x|^2)}.
    \end{equation*}
\end{cor}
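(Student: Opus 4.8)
The plan is to combine two ingredients: a Gaussian lower bound on the \emph{initial} datum $\bar\rho_0$, extracted from the growth condition \eqref{initialbound1}, and the time-propagated lower bound of Corollary \ref{leastgauss}, namely $\bar\rho(x,t) \geq \bar\rho(x,0)\, e^{-Ct(1+|x|^2)}$.

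First I would show that \eqref{initialbound1} forces $\bar\rho_0(x) \geq e^{-c(1+|x|^2)}$ for a suitable constant $c$. Since $|\nabla \log \bar\rho_0|$ is finite everywhere, $\bar\rho_0$ is a strictly positive smooth function, so $\log\bar\rho_0$ is well-defined and smooth. I would pick a reference point $x_*$ at which $\bar\rho_0(x_*)$ is bounded below by a constant depending only on the data; such a point exists because, by the Gaussian upper bound \eqref{initialbound3}, a definite fraction of the unit mass of $\bar\rho_0$ sits inside a ball $B_{R_0}$ with $R_0 = R_0(C_3)$, whence $\sup_{B_{R_0}} \bar\rho_0 \geq (2\pi R_0^2)^{-1}$. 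Integrating $\nabla\log\bar\rho_0$ along the segment from $x_*$ to $x$ and using \eqref{initialbound1} in the form $|\nabla\log\bar\rho_0(y)| \leq \sqrt{C_1}\,(1+|y|)$ then gives
\[
\log\bar\rho_0(x) \geq \log\bar\rho_0(x_*) - \int_0^1 \sqrt{C_1}\,\bigl(1+|x_* + s(x-x_*)|\bigr)\,|x-x_*|\,\ud s \geq -c(1+|x|^2),
\]
where the last step absorbs the linear term via Young's inequality $|x| \leq \tfrac12(1+|x|^2)$, and $c$ depends only on $C_1$ and $C_3$. Exponentiating yields the claimed Gaussian lower bound for $\bar\rho_0$.

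Next I would feed this into Corollary \ref{leastgauss}:
\[
\bar\rho(x,t) \geq \bar\rho(x,0)\,e^{-Ct(1+|x|^2)} \geq e^{-c(1+|x|^2)}\,e^{-Ct(1+|x|^2)} = e^{-(c+Ct)(1+|x|^2)}.
\]
Choosing $c_1 = \max\{c, C\}$ one has $c + Ct \leq c_1(1+t)$, so $\bar\rho(x,t) \geq e^{-c_1(1+t)(1+|x|^2)}$, as desired; here $c_1$ depends only on $C_1$, $C_3$ and, through $C$ in Corollary \ref{leastgauss}, on $\|\bar\rho_0\|_{W^{2,\infty}}$.

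I do not expect a genuine analytic obstacle: the statement is essentially a bookkeeping consequence of Corollary \ref{leastgauss}. The only points requiring a little care will be (i) producing a lower bound at some reference point, so that the final constant depends only on the stated data rather than on the a priori unknown value $\bar\rho_0(0)$, and (ii) the elementary absorption of the linear-in-$|x|$ contribution of the path integral into the quadratic exponent.
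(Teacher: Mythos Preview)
Your proposal is correct and follows the same route as the paper: the paper merely remarks that the mean-value inequality applied to $\nabla\log\bar\rho_0$ under \eqref{initialbound1} yields a Gaussian lower bound on $\bar\rho_0$, and then invokes Corollary \ref{leastgauss}. Your argument spells this out in full, and your extra care in locating a reference point $x_*$ via \eqref{initialbound3} (so that the final constant depends only on the stated initial bounds, consistent with the paper's blanket convention on constants) is a detail the paper leaves implicit.
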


\red{
\begin{rmk}
    The Gaussian lower bound of the vorticity function has also been established in \cite{osada1987diffusion,giga1988two}. We present here another possible method by establishing the Li-Yau type estimate, which has its own interest for study.
\end{rmk}
}

\section{Hamilton Type Gradient Estimates}
\label{section4}
In the article \cite{hamilton1993matrix}, Hamilton derived another logarithm gradient estimate of the heat equation, together with a lower bound estimate of the Hessian matrix of the logarithm of the solution. Later an upper bound estimate of the Hessian is given in \cite{han2016upper}.

In the article \cite{li2014liyauhamilton}, Li \red{revisited the} notion of diffusion operator \red{\cite{bakry2013analysis}}, which is a natural extension of Laplacian on weighted manifolds. With the help of a related maximum principle, Li proved a logarithm gradient estimate of Hamilton-type and an upper bound for the Hessian matrix. Now similar results can be derived for the 2D Navier-Stokes equation, which are essential in the proof of our main results.

We recall that the velocity field $u= K \ast \Bar{\rho}$ and in particular $u$ can be viewed as the negative gradient vector field of the potential $f=g\ast \Bar{\rho}$, where $g(x)= - \frac{1}{2 \pi } \arctan \frac{x_1}{x_2}$. \red{In addition, we can define} $g(x)=\frac{1}{4}$ for $x_1<0, x_2=0$ and $g(x)=- \frac{1}{4}$ for $x_1>0, x_2=0$. Thus  $g \in L^\infty$ with  singularity only on the line $\{ (x_1, x_2) \vert x_2 = 0\}$. \red{This expression essentially coincides with the one in \cite{jabin2018quantitative}, namely $K=\nabla \cdot V$ for some $L^\infty$ diagonal matrix $V$. The diagonal elements coincide with $g(x)$ under some smooth correction for periodizing.} 

The weighted function $f$ has nice smoothness and regularity. In fact,  according to \cite{ben1994global}, given the initial data $\bar{\rho}_0 \in L^1$, then $\Bar{\rho}$ and the velocity field $u = K \ast \Bar{\rho}$ are smooth. Hence by $u = - \nabla f$ we conclude that $f$ is smooth.  


Now we define the diffusion operator 
\[
\Delta_f=\Delta+\langle \nabla f, \nabla \rangle, 
\]
with $u = K \ast \Bar{\rho} = - \nabla f$. Hence $\Bar{\rho}$ solves the equation $\partial_t \Bar{\rho}=\Delta_f \Bar{\rho}$. 

\subsection{Logarithm Gradient Estimate}

Our first result in this section is the following Hamilton-type logarithm gradient estimate.
\begin{thm}[Logarithm Gradient Estimate]\label{loggradient}
    Assume \red{as in our main result} that the initial data $\bar{\rho}_0 \red{\in W^{2,\infty}(\R^2)}$ satisfies the growth conditions
    \begin{equation}\label{nablaini}
        |\nabla \log \bar{\rho}_0(x)|^2 \leq \red{C_1}(1+|x|^2),
    \end{equation}
    \begin{equation}\label{deltaini}
        |\nabla^2 \log \bar{\rho}_0(x)| \leq \red{C_2}(1+|x|^2),
    \end{equation}
    and \red{the Gaussian upper bound}
    \begin{equation}\label{iniup}
        \bar{\rho}_0(x) \leq \red{C_3}\exp(-\red{C_3}^{-1}|x|^2),
    \end{equation}
    \red{for some constants $C_1,C_2,C_3$.} Then we have the linear growth control on the gradient of $\log \Bar{\rho}$:
    \begin{equation}\label{gradient}
        |\nabla \log \Bar{\rho}(x,t)| \leq \red{M_1}(1+|x|),
    \end{equation}
    \red{for some constant $M_1$ that depends on $C_1,C_2,C_3,\|\bar\rho_0\|_{W^{2,\infty}}$.}
\end{thm}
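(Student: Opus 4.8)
The plan is to run a Hamilton--Li type maximum principle following \cite{hamilton1993matrix, li2014liyauhamilton}, localized on the plane in exactly the way the proof of Theorem \ref{liyau} localizes the Li--Yau argument. Write $v=\log\bar\rho$. Since the velocity $u=K\ast\bar\rho=-\nabla f$ is a genuine gradient field, the diffusion operator $\Delta_f=\Delta+\langle\nabla f,\nabla\rangle$ is symmetric with respect to $e^{f}\ud x$, $\bar\rho$ solves $\partial_t\bar\rho=\Delta_f\bar\rho$, and hence $v$ solves
\[
\partial_t v=\Delta_f v+|\nabla v|^2 .
\]
Put $b:=\log\max\{1,\|\bar\rho_0\|_\infty\}$ and $w:=b-v$; by the uniform $L^\infty$ bound $\|\bar\rho(t)\|_\infty\le\|\bar\rho_0\|_\infty$ from Lemma \ref{kconw} one has $w\ge 0$, and $\partial_t w=\Delta_f w-|\nabla w|^2$. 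The quantity I would feed into the maximum principle is the Hamilton-type expression
\[
P:=(1+t)\,|\nabla w|^2-\gamma\, w-\delta ,
\]
with constants $\gamma,\delta>0$ to be fixed; the weight $1+t$ is chosen so that, once $P$ is controlled, dividing by $1+t$ kills the time growth and leaves a bound that is only linear in $|x|$.

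Before the computation I would collect the inputs from the earlier sections. Corollary \ref{leastgauss2} gives the two-sided control $0\le w(x,t)\le C_\ast(1+t)(1+|x|^2)$. The initial hypotheses \eqref{nablaini} and the Gaussian upper bound \eqref{iniup} give, at $t=0$, $|\nabla w_0|^2=|\nabla\log\bar\rho_0|^2\le C_1(1+|x|^2)$ and $w_0=b-\log\bar\rho_0\ge C_3^{-1}|x|^2-C$, so that $P(\cdot,0)\le (C_1-\gamma/C_3)|x|^2+C-\delta\le 0$ as soon as $\gamma\ge C_1C_3$ and $\delta$ is large in terms of $C_1,C_3,\gamma,b$. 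Crucially for the $T$-independence of the final constant, the \emph{decaying} bound $\|\nabla^2 f\|_\infty=\|\nabla u\|_\infty\le A_2/(t\vee 1)$ from Lemma \ref{allsobolev} makes the Bakry--\'Emery curvature term uniformly bounded: $2(1+t)\,|\langle\nabla^2 f\,\nabla w,\nabla w\rangle|\le 2(1+t)\|\nabla u\|_\infty|\nabla w|^2\le 4A_2|\nabla w|^2$ for every $t\ge 0$. Finally $\|u\|_\infty\le A_1$ (Lemma \ref{kconw}) will absorb the drift part of $\Delta_f$ acting on the cutoff; the bound on $K\ast\partial_t\bar\rho$ (Lemma \ref{kconwt}) and the Hessian hypothesis \eqref{deltaini} are not needed here but will enter the companion Hessian estimate.

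The core is a weighted Bochner identity,
\[
(\partial_t-\Delta_f)|\nabla w|^2=2\langle\nabla^2 f\,\nabla w,\nabla w\rangle-2\nabla w\cdot\nabla|\nabla w|^2-2|\nabla^2 w|^2 ,
\]
which yields $(\partial_t-\Delta_f)P=(1+\gamma)|\nabla w|^2+2(1+t)\langle\nabla^2 f\,\nabla w,\nabla w\rangle-2(1+t)\nabla w\cdot\nabla|\nabla w|^2-2(1+t)|\nabla^2 w|^2$. With $\phi(x)=\psi(|x|/R)$ the cutoff from Theorem \ref{liyau}, one studies the maximum of $\phi^2P$ on $\overline{B_{2R}}\times[0,T]$, attained since $\phi$ is compactly supported; if positive it is attained at an interior point with $t_0>0$ (because $P(\cdot,0)\le 0$). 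There one uses $\nabla(\phi^2P)=0$ to rewrite the term $-2(1+t)\nabla w\cdot\nabla|\nabla w|^2$, estimates the curvature term by $4A_2|\nabla w|^2$, discards $-2(1+t)|\nabla^2 w|^2$, and handles the cutoff errors via $|\nabla\phi|^2\le C\phi/R^2$ and $|\Delta_f\phi|=|\Delta\phi-u\cdot\nabla\phi|\le C/R$ (here $\|u\|_\infty\le A_1$ is used), so that for $\gamma>1+4A_2$ the coercive term $-(\gamma-1-4A_2)\phi^2|\nabla w|^2$ dominates. As in \cite{Li1986OnTP} this produces $\phi^2P\le C(1+R^2)$ on $\overline{B_{2R}}\times[0,T]$, hence on $B_R$
\[
(1+t)|\nabla w|^2\le \gamma w+\delta+C(1+R^2)\le \gamma C_\ast(1+t)(1+R^2)+\delta+C(1+R^2);
\]
dividing by $1+t\ge 1$ gives $|\nabla\log\bar\rho|^2\le C'(1+R^2)$ on $B_R$, and taking $R=|x|$ for $|x|\ge 2$ and $R=2$ otherwise yields \eqref{gradient}.

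The step I expect to be the genuine obstacle is the localization bookkeeping: carrying the Hamilton--Li computation through on the whole plane with the non-self-adjoint drift $-u\cdot\nabla$ and the time-dependent potential $f$, and checking that every cutoff error is either absorbed by the negative terms $-(\gamma-1-4A_2)\phi^2|\nabla w|^2$ and $-2(1+t)\phi^2|\nabla^2 w|^2$ or bounded by $C(1+R^2)$ --- in particular the cross term $\tfrac{4}{\phi}P\,\nabla w\cdot\nabla\phi$ arising from $\nabla(\phi^2P)=0$, which must be treated without losing powers of $\phi$ (this is precisely what the structural conditions on $\psi$ are designed for). The other delicate point is keeping the eventual constant genuinely independent of $T$; as indicated above, this hinges on the exact compensation between the $\mathcal{O}(1+t)$ growth of $w$ coming from the Gaussian lower bound of Corollary \ref{leastgauss2} and the $\mathcal{O}((t\vee 1)^{-1})$ decay of $\|\nabla u\|_\infty$ coming from Lemma \ref{allsobolev}.
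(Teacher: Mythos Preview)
Your proposal diverges from the paper's proof in a crucial way, and the point you yourself flag as ``the genuine obstacle'' is in fact fatal to the localized scheme as written. After your substitution $\nabla|\nabla w|^2=\frac{1}{1+t}(\nabla P+\gamma\nabla w)$, the quantity $P$ satisfies $(\partial_t-L)P\le(1+4A_2-\gamma)|\nabla w|^2$ with $L=\Delta_f-2\nabla w\cdot\nabla$. Multiplying by $\phi^2$ and evaluating at the positive maximum of $\phi^2P$ using $\phi\nabla P=-2P\nabla\phi$, the drift contribution becomes the cross term $4\phi P\,\nabla w\cdot\nabla\phi$. Young's inequality turns this into $\epsilon\phi^2|\nabla w|^2+\tfrac{C}{\epsilon R^2}\phi P^2$; the first piece is absorbed, but the second is \emph{quadratic} in $P$. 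Your only remaining coercive term is $-(\gamma-1-4A_2)\phi^2|\nabla w|^2\sim -\tfrac{1}{1+t}\phi^2 P$, which is \emph{linear} in $P$. Feeding one into the other yields $\tfrac{c}{1+t}\phi^2 P\le\tfrac{C}{R}\phi P+\tfrac{C'}{R^2}\phi P^2$, which after dividing by $\phi P>0$ gives only a \emph{lower} bound $P\gtrsim R^2/(1+t)$ at points where $\phi\approx 1$ --- no closing inequality of the form $a(\phi^2P)^2-b(\phi^2P)-c\le0$ emerges. This is exactly the difference from the Li--Yau argument of Theorem~\ref{liyau}: there the good Hessian term $\phi^2|\Delta f|^2$ is itself quadratic in $F$, so the cutoff errors are subcritical. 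Discarding $-2(1+t)|\nabla^2w|^2$ throws away the only possible source of quadratic coercivity, and keeping it does not obviously help since $\Delta w$ involves $\partial_t w$, for which you lack a two-sided bound.

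The paper avoids this entirely by a different choice of auxiliary function and a \emph{global} (not localized) maximum principle. It sets
\[
F=\frac{|\nabla\bar\rho|^2}{\bar\rho}+B_1\bar\rho\log\bar\rho-B_2\bar\rho=\bar\rho\cdot\bigl(|\nabla\log\bar\rho|^2+B_1\log\bar\rho-B_2\bigr),
\]
and computes via Lemma~\ref{cal} that $(\partial_t-\Delta_f)F\le0$ once $B_1\ge 2A_2$. The point of the extra factor $\bar\rho$ is that it absorbs the nonlinear drift $-2\nabla w\cdot\nabla$ into $\Delta_f$, so the resulting parabolic inequality carries only the \emph{bounded} drift $u=-\nabla f$. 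Then the Grigor'yan-type maximum principle (Theorem~\ref{karp}) applies globally --- no cutoffs --- after checking the growth condition \eqref{condi2} with $\alpha(r)\sim r^2\log r$ using the Gaussian lower bound of Corollary~\ref{leastgauss2}. This yields $|\nabla\log\bar\rho|^2+B_1\log\bar\rho\le B_2$, and the lower bound on $\log\bar\rho$ converts it to \eqref{gradient}. To make the constant $T$-independent the paper runs a second pass with a time-weight $\phi(t)=t/(2A_2+1)$, paralleling your $(1+t)$ idea.
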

The proof of Theorem \ref{loggradient} relies on the following maximum principle associated to the diffusion operator, which was proved by Grigor'yan. We borrow the expression from \cite[Theorem 11.9]{grigoryan2009heat}.
\begin{thm}[Grigor'yan]\label{gri}
    Let $(M,g,e^f \ud V)$ be a complete weighted manifold, and let $F(x,t)$ be a solution of
    \begin{equation}\label{condigri1}
        \partial_t F = \Delta_f F \text{ in } M \times (0,T], \quad F(\cdot,0) = 0.
    \end{equation}
    Assume that for some $x_0 \in M$ and for all $r>0$,
    \begin{equation}\label{condigri2}
        \int_0^T \int_{B(x_0,r)} F^2(x,t) e^{f(x)} \ud V \ud t \leq e^{\alpha(r)}
    \end{equation}
    for some $\alpha(r)$ positive increasing function on $(0,\infty)$ such that
    \begin{equation}\label{condigri3}
        \int_0^\infty \frac{r}{\alpha(r)} \ud r=\infty.
    \end{equation}
    Then $F = 0$ on $M \times (0,T]$.
\end{thm}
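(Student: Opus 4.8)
The plan is to treat this as a Täcklind-type uniqueness theorem for the weighted heat equation and to reconstruct it via Grigor'yan's \emph{integrated maximum principle} — an $L^2$ energy method driven by a moving Gaussian weight. Throughout, write $\ud\mu = e^f \ud V$ for the weighted measure, with respect to which $\Delta_f = \Delta + \langle\nabla f,\nabla\cdot\rangle$ is self-adjoint, so that $\int_M (\Delta_f u)\, v\, \ud\mu = -\int_M \langle\nabla u,\nabla v\rangle\, \ud\mu$ for suitably decaying $u,v$. The first step is the basic energy inequality: given a Lipschitz weight $\xi(x,t)$, compactly supported in space, I would differentiate $J(t) = \int_M F^2 e^\xi \,\ud\mu$ along \eqref{condigri1}, integrate by parts, and use Young's inequality to absorb the gradient cross term, arriving at
\begin{equation*}
    J'(t) \leq \int_M F^2 \Big( \partial_t \xi + \tfrac12 |\nabla \xi|^2 \Big) e^\xi \,\ud\mu.
\end{equation*}
Hence, whenever the weight satisfies the Hamilton--Jacobi inequality $\partial_t\xi + \tfrac12|\nabla\xi|^2 \leq 0$, the quantity $J$ is non-increasing.

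The second step is the choice of weight. I would take the moving Gaussian $\xi(x,t) = -\frac{d(x,x_0)^2}{2(t_0-t)}$ for a fixed target time $t_0>0$, where $d(\cdot,x_0)$ is the Riemannian distance. Since $|\nabla d|\leq 1$ almost everywhere, a direct computation gives $\partial_t\xi + \tfrac12|\nabla\xi|^2 \leq 0$, so $J$ is non-increasing. Because $F(\cdot,0)=0$ by \eqref{condigri1}, we have $J(0)=0$; monotonicity then forces $J(t)\equiv 0$, hence $F\equiv 0$, on $[0,t_0)$ — \emph{provided} the formal integration by parts is legitimate, i.e. the spatial flux terms vanish. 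To make this honest I would replace $\xi$ by a regularized, spatially cut-off version supported in $B(x_0,r)$ and track the resulting flux on $\partial B(x_0,r)$.

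The heart of the argument, and the step I expect to be the main obstacle, is controlling these flux terms as $r\to\infty$ and converting the Gaussian decay into a quantitative \emph{time-step} length. The weight contributes decay of order $e^{-d^2/2(t_0-t)}$, which must beat the a priori $L^2$-growth of $F$ encoded in \eqref{condigri2}, namely $\int_0^T\int_{B(x_0,r)} F^2 e^f \,\ud V\,\ud t \leq e^{\alpha(r)}$. Balancing $\frac{r^2}{t_0-t}$ against $\alpha(r)$ shows that one application of the maximum principle propagates the vanishing of $F$ forward over a time interval of length comparable to $r^2/\alpha(r)$. I would then iterate: cover $[0,T]$ by successive intervals $[\tau_k,\tau_{k+1}]$ with radii $r_k\to\infty$ and $\tau_{k+1}-\tau_k \sim r_k^2/\alpha(r_k)$, restarting from $F(\cdot,\tau_k)=0$ at each stage. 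The Täcklind divergence condition \eqref{condigri3}, $\int_0^\infty \frac{r}{\alpha(r)}\,\ud r = \infty$, is precisely what guarantees $\sum_k r_k^2/\alpha(r_k) = \infty$, so the steps exhaust $[0,T]$ and yield $F\equiv 0$ on $M\times(0,T]$.

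The delicate technical points to be resolved are: (i) justifying the integration by parts for a possibly unbounded $F$, which the spatial cutoff together with the finiteness built into \eqref{condigri2} handles; (ii) the low regularity of the distance function, treated by mollifying $d$ and invoking $|\nabla d|\leq 1$ a.e.; and (iii) making the chaining argument quantitative so that the total covered time is controlled exactly by $\int r/\alpha(r)\,\ud r$. Since the result is classical — it is Theorem~11.9 in \cite{grigoryan2009heat} — I would ultimately cite it, but the energy-method skeleton above is the route I would follow to reconstruct the proof.
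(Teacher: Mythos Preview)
Your proposal is correct and, in fact, goes beyond what the paper does: the paper does not prove Theorem~\ref{gri} at all but simply states it and refers the reader to \cite{grigor2006heat,grigoryan2009heat} for the proof. Your sketch accurately reconstructs Grigor'yan's integrated maximum principle argument (the moving Gaussian weight satisfying the Hamilton--Jacobi inequality, the energy monotonicity, and the iterative chaining with step lengths $\sim r^2/\alpha(r)$ summing to infinity via \eqref{condigri3}), so your approach is entirely consistent with the cited source and with the paper's treatment.
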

\red{Here a complete weighted manifold $(M,g,e^f \ud V)$ is a complete Riemannian manifold $(M,g)$ associated with a weighted volume form $e^f \ud V$.} The above theorem is originally used to obtain the uniqueness of the bounded solution to the Cauchy problem, which helps to prove the stochastic completeness of weighted manifolds. We refer the proof of Theorem \ref{gri} and some related discussions to \cite{grigor2006heat,grigoryan2009heat}.

By examining the proof of Theorem \ref{gri}, we notice, as also shown in \cite{li2014liyauhamilton}, that the result has the following variant, which we will mainly refer to later. \red{In the following,  $F_+:= \max(F,0)$ denotes the positive part of $F$.}
\begin{thm}[Grigor'yan, an extended version]\label{karp}
    Let $(M,g,e^f \ud V)$ be a complete weighted manifold, and let $F(x,t)$ be a solution of
    \begin{equation}\label{condi1}
        \partial_t F \leq \Delta_f F \text{ in } M \times (0,T], \quad F(\cdot,0) \leq 0.
    \end{equation}
    Assume that for some $x_0 \in M$ and for all $r>0$,
    \begin{equation}\label{condi2}
        \int_0^T \int_{B(x_0,r)} F_+^2(x,t) e^{f(x)} \ud V \ud t \leq e^{\alpha(r)}
    \end{equation}
    for some $\alpha(r)$ positive increasing function on $(0,\infty)$ such that
    \begin{equation}\label{condi3}
        \int_0^\infty \frac{r}{\alpha(r)} \ud r=\infty.
    \end{equation}
    Then $F \leq 0$ on $M \times (0,T]$.
\end{thm}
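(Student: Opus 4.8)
The plan is to reproduce the proof of Theorem~\ref{gri} (Grigor'yan's integral maximum principle; see \cite[Theorem~11.9]{grigoryan2009heat} and \cite{grigor2006heat}) with $F$ replaced throughout by its positive part $F_+$; the estimates are then identical and the conclusion $F\equiv0$ of the original statement becomes $F_+\equiv0$, that is, $F\le0$. This is precisely the variant already employed in \cite{li2014liyauhamilton}. The structural facts that legitimate the substitution are: $F_+\in W^{1,2}_{\mathrm{loc}}$ with $\nabla F_+=\mathbf{1}_{\{F>0\}}\nabla F$; $F_+(\cdot,0)=0$ by \eqref{condi1}; and a Kato-type inequality $\partial_t F_+\le\Delta_f F_+$ in the distributional sense, following from \eqref{condi1} --- equivalently, in the energy computation below every test function vanishes on $\{F\le0\}$, so the region where $F$ is negative is never seen.

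The conceptual core is a weighted $L^2$ energy estimate with a Gaussian space-time weight. Fix a base point $x_0\in M$ and a number $b>T$, and set
\[
\xi(x,t)=\exp\!\Big(-\frac{d(x,x_0)^2}{c_0(b-t)}\Big),\qquad (x,t)\in M\times[0,T],
\]
where $c_0$ is a fixed large constant. Since $|\nabla d(\cdot,x_0)|\le1$ almost everywhere, one checks that for $c_0$ large enough $\xi$ satisfies the Hamilton--Jacobi-type inequality $\partial_t\log\xi+C_0|\nabla\log\xi|^2\le0$, where $C_0$ is the absorption constant produced in the estimate below. Let $\chi_R\in C_c^\infty(M)$ be a cutoff equal to $1$ on $B(x_0,R)$, supported in $B(x_0,2R)$, with $|\nabla\chi_R|\le C/R$. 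Multiplying the inequality $\partial_t F\le\Delta_f F$ by $2F_+\,\xi\,\chi_R^2$ and integrating against the weighted measure $e^f\ud V$ --- for which $\int_M(\Delta_f\varphi)\,\psi\,e^f\ud V=-\int_M\nabla\varphi\cdot\nabla\psi\,e^f\ud V$, with no boundary term since $\chi_R$ is compactly supported --- and then completing the square to absorb the $\xi\chi_R^2|\nabla F_+|^2$ terms, one arrives at
\[
\frac{\ud}{\ud t}\!\int_M F_+^2\,\xi\,\chi_R^2\,e^f\ud V\;\le\;\int_M F_+^2\,\xi\,\chi_R^2\big(\partial_t\log\xi+C_0|\nabla\log\xi|^2\big)e^f\ud V\;+\;\frac{C}{R^2}\!\int_{B(x_0,2R)\setminus B(x_0,R)}\!\!F_+^2\,\xi\,e^f\ud V.
\]
The first term on the right is $\le0$ by the choice of $\xi$; for the cutoff error, on the annulus $\{d(\cdot,x_0)\ge R\}$ we have $\xi\le\exp(-R^2/(c_0 b))$, so after integrating in time and invoking \eqref{condi2} this term is bounded by $\frac{C}{R^2}\exp\!\big(\alpha(2R)-R^2/(c_0 b)\big)$.

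Since $F_+(\cdot,0)=0$, integrating the inequality in $t$ and letting $R\to\infty$ would give $\int_M F_+^2(\cdot,\tau)\,\xi(\cdot,\tau)\,e^f\ud V=0$ for every $\tau\in(0,T]$, hence $F\le0$ on $M\times(0,T]$ --- provided the cutoff error tends to $0$, i.e.\ $\alpha(2R)-R^2/(c_0 b)\to-\infty$. This is immediate when $\alpha$ grows subquadratically, or quadratically with a small enough leading constant, but in general a single Gaussian weight cannot dominate $e^{\alpha(2R)}$ over an entire time interval. The remedy --- and the one place where hypothesis \eqref{condi3} is genuinely used --- is to partition $[0,T]$ and iterate the estimate over a dyadic sequence of balls $B(x_0,2^k\rho)$ at a matching sequence of times, the Gaussian factors telescoping across scales; the condition $\int_0^\infty\frac{r}{\alpha(r)}\ud r=\infty$ is exactly what ensures this iteration closes and still exhausts all of $[0,T]$. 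I expect this bookkeeping to be the main obstacle, but it is carried out in full in \cite[Theorem~11.9]{grigoryan2009heat}, and since passing from $F$ to $F_+$ affects none of the estimates I would simply invoke that argument rather than repeat it.
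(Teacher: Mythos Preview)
Your proposal is correct and follows essentially the same approach as the paper: both observe the Kato-type inequality $\partial_t F_+ \le \Delta_f F_+$ (the paper phrasing this via ``$\phi(x)=x_+$ is non-decreasing, convex, continuous and piecewise smooth''), note that Grigor'yan's original argument goes through unchanged when the equality is an inequality and the solution is nonnegative, and then invoke \cite[Theorem~11.9]{grigoryan2009heat} for $F_+$. Your exposition of the Gaussian-weight energy estimate and the dyadic iteration is more detailed than the paper's one-paragraph reduction, but the strategy is identical.
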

For completeness, here we explain why Theorem \ref{karp} is valid. Indeed, since $\phi(x)=x_+$ is non-decreasing, convex, continuous and piecewise smooth, condition (\ref{condi1}) yields that
\begin{equation*}
    \partial_t F_+ \leq \Delta_f F_+ \text{ in } M \times (0,T], \quad F_+(\cdot,0) = 0.
\end{equation*}
Also in Theorem \ref{gri} the result still holds for $F$ satisfying $\partial_t F\leq \Delta_f F$ and $F \geq 0$, since the only part using the PDE is the equation (11.37) in \cite{grigoryan2009heat}, which turns into an inequality in the new case and still yields the following steps. Hence we apply Theorem \ref{gri} to $F_+$ and obtain Theorem \ref{karp}.

Now we turn to the proof of our main gradient estimates. In order to construct the auxiliary function $F(x,t)$ for our setting, we need some elementary calculations first.
\begin{lemma}\label{cal} 
Assume that $\Bar{\rho}$ solves the 2D Navier-Stokes equation \eqref{limit} and recall that $\Delta_f=\Delta+\langle \nabla f, \nabla \rangle$ with $u = - \nabla f = K \ast \Bar{\rho}$. Then
    \begin{align}
    &(\partial_t -\Delta_f) \red{\Big(}\frac{|\nabla \Bar{\rho}|^2}{\Bar{\rho}}\red{\Big)}=-\frac{2}{\Bar{\rho}}\red{\Big|}\nabla^2 \Bar{\rho}-\frac{\nabla \Bar{\rho} \otimes \nabla \Bar{\rho}}{\Bar{\rho}}\red{\Big|}^2-\frac{2}{\Bar{\rho}} \red{\nabla(K \ast \bar\rho):(\nabla \bar\rho \otimes \nabla \bar\rho)} \leq \frac{\red{2A_2}}{t \vee 1} \frac{|\nabla \Bar{\rho}|^2}{\Bar{\rho}},\label{nablawsquare}\\
    &(\partial_t-\Delta_f) (\Bar{\rho}\log \Bar{\rho})=-\frac{|\nabla \Bar{\rho}|^2}{\Bar{\rho}},\label{wlogw}
\end{align}
\end{lemma}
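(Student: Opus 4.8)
The plan is to regard $\Delta_f=\Delta+\langle\nabla f,\nabla\rangle$ as a diffusion operator in the Bakry--\'Emery sense and to run the standard Hamilton/Bochner-type manipulations; the only delicate points will be the sign of the curvature term (the drift enters $\Delta_f$ with a $+$) and the observation that the velocity field $u=K\ast\bar\rho$ plays no dynamical role in these pointwise identities. Throughout I would use that $\bar\rho$ solves $\partial_t\bar\rho=\Delta_f\bar\rho$, that $\bar\rho$, $u$ and hence $f$ (with $\nabla f=-u=-K\ast\bar\rho$) are smooth with the derivative bounds of Lemma \ref{allsobolev}, and that $\bar\rho>0$ everywhere (see Corollary \ref{leastgauss}); so every quantity below is a classical, globally defined smooth function and division by $\bar\rho$ is legitimate.

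First I would record the algebraic rules obeyed by $\Delta_f$ on flat $\mathbb{R}^2$. Since $\langle\nabla f,\nabla\rangle$ is a first-order derivation, $\Delta_f$ satisfies the chain rule $\Delta_f(\Phi(w))=\Phi'(w)\,\Delta_f w+\Phi''(w)\,|\nabla w|^2$, the Leibniz rule $\Delta_f(\phi\psi)=\phi\,\Delta_f\psi+\psi\,\Delta_f\phi+2\langle\nabla\phi,\nabla\psi\rangle$, and the Bochner identity
\[
\Delta_f|\nabla w|^2=2|\nabla^2 w|^2+2\langle\nabla w,\nabla\Delta_f w\rangle-2\,\nabla^2 f(\nabla w,\nabla w),
\]
the last term being the Bakry--\'Emery Ricci curvature (on $\mathbb{R}^2$ one has $\mathrm{Ric}\equiv0$). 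Using $\nabla f=-u$ gives $\nabla^2 f=-\nabla(K\ast\bar\rho)$, so $\nabla^2 f(\nabla\bar\rho,\nabla\bar\rho)=-\nabla(K\ast\bar\rho):(\nabla\bar\rho\otimes\nabla\bar\rho)$, i.e. structurally the same curvature term $\nabla(K\ast\bar\rho)$ already met in the proof of Theorem \ref{liyau}. I emphasize that when computing $(\partial_t-\Delta_f)$ of a function of $\bar\rho$ one never differentiates $u$ --- the operator $\Delta_f$ is frozen at time $t$ --- so the nonlinearity introduces no extra term.

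For \eqref{wlogw} I would apply the chain rule with $\Phi(s)=s\log s$, so that $\Phi''(s)=1/s$: since $\partial_t(\Phi(\bar\rho))=\Phi'(\bar\rho)\,\partial_t\bar\rho=\Phi'(\bar\rho)\,\Delta_f\bar\rho$ and $\Delta_f(\Phi(\bar\rho))=\Phi'(\bar\rho)\,\Delta_f\bar\rho+\Phi''(\bar\rho)|\nabla\bar\rho|^2$, the $\Phi'(\bar\rho)\Delta_f\bar\rho$ terms cancel and one is left with $(\partial_t-\Delta_f)(\bar\rho\log\bar\rho)=-|\nabla\bar\rho|^2/\bar\rho$. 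For \eqref{nablawsquare} I would set $P=|\nabla\bar\rho|^2\cdot\bar\rho^{-1}$; then $\partial_t P=\frac{2\langle\nabla\bar\rho,\nabla\Delta_f\bar\rho\rangle}{\bar\rho}-\frac{|\nabla\bar\rho|^2\Delta_f\bar\rho}{\bar\rho^2}$, while $\Delta_f P$ follows from the Leibniz rule together with $\Delta_f(\bar\rho^{-1})=-\bar\rho^{-2}\Delta_f\bar\rho+2\bar\rho^{-3}|\nabla\bar\rho|^2$, the Bochner identity above, and $\langle\nabla|\nabla\bar\rho|^2,\nabla\bar\rho^{-1}\rangle=-2\bar\rho^{-2}\,\nabla^2\bar\rho:(\nabla\bar\rho\otimes\nabla\bar\rho)$. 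Subtracting, the terms $\langle\nabla\bar\rho,\nabla\Delta_f\bar\rho\rangle/\bar\rho$ and $|\nabla\bar\rho|^2\Delta_f\bar\rho/\bar\rho^2$ cancel, and the surviving second-order terms collapse, via the identity $|\nabla^2\bar\rho|^2-\frac{2}{\bar\rho}\nabla^2\bar\rho:(\nabla\bar\rho\otimes\nabla\bar\rho)+\frac{|\nabla\bar\rho|^4}{\bar\rho^2}=\big|\nabla^2\bar\rho-\frac{\nabla\bar\rho\otimes\nabla\bar\rho}{\bar\rho}\big|^2$, into
\[
(\partial_t-\Delta_f)P=-\frac{2}{\bar\rho}\Big|\nabla^2\bar\rho-\frac{\nabla\bar\rho\otimes\nabla\bar\rho}{\bar\rho}\Big|^2+\frac{2}{\bar\rho}\,\nabla^2 f(\nabla\bar\rho,\nabla\bar\rho),
\]
which is the claimed identity after substituting $\nabla^2 f=-\nabla(K\ast\bar\rho)$.

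Finally, for the inequality in \eqref{nablawsquare} I would drop the nonpositive perfect square and bound $|\nabla(K\ast\bar\rho):(\nabla\bar\rho\otimes\nabla\bar\rho)|\leq\|\nabla u\|_\infty|\nabla\bar\rho|^2$; since \eqref{usobolev} with $k=1$ gives $\|\nabla u\|_\infty\leq A_2/(t\vee1)$, this yields $(\partial_t-\Delta_f)P\leq\frac{2A_2}{t\vee1}\frac{|\nabla\bar\rho|^2}{\bar\rho}$, as asserted. The hard part here is purely bookkeeping --- fixing the sign of the Bakry--\'Emery curvature (with the $+\langle\nabla f,\nabla\rangle$ convention it equals $-\nabla^2 f=\nabla(K\ast\bar\rho)$) and checking the cancellations that compress all second-order contributions into the single Hamilton-type square --- there is no genuine analytic difficulty, since $\bar\rho$ is smooth and strictly positive with the bounds collected in Section \ref{section2}.
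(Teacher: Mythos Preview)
Your proof is correct and arrives at exactly the same identity as the paper; the difference is organizational. The paper (Section \ref{appA}) expands $\partial_{x_i}(|\nabla\bar\rho|^2/\bar\rho)$, differentiates again and sums to get $\Delta$, computes $\partial_t$ from the PDE, and then subtracts and rearranges term by term until the drift contribution $-(K\ast\bar\rho)\cdot\nabla(|\nabla\bar\rho|^2/\bar\rho)$ and the perfect square are isolated by hand. You instead work directly with the diffusion operator $\Delta_f$, so that the chain/Leibniz rules and the Bochner identity $\Delta_f|\nabla w|^2=2|\nabla^2 w|^2+2\langle\nabla w,\nabla\Delta_f w\rangle-2\nabla^2 f(\nabla w,\nabla w)$ do the bookkeeping for you: the $\langle\nabla\bar\rho,\nabla\Delta_f\bar\rho\rangle$ and $|\nabla\bar\rho|^2\Delta_f\bar\rho$ terms cancel automatically, and the Hamilton square assembles structurally rather than by inspection. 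This buys clarity and makes the role of the Bakry--\'Emery curvature $-\nabla^2 f=\nabla(K\ast\bar\rho)$ explicit, at the cost of invoking a slightly higher-level formalism. One small expository point: your remark that ``the nonlinearity introduces no extra term'' is a bit overstated---the curvature term $\nabla(K\ast\bar\rho):(\nabla\bar\rho\otimes\nabla\bar\rho)$ in the final identity \emph{is} the spatial contribution of the nonlinearity, entering through $\nabla^2 f$ in the Bochner formula; what you correctly observe is that no \emph{time} derivative of $u$ appears.
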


The proof is based on direct calculations, which we put into Section \ref{appA}.

Now we proceed to give the proof of Theorem \ref{loggradient}. 

\begin{proof}[Proof of Theorem \ref{loggradient}]
We define
\begin{equation*}
    F(x,t)=\frac{|\nabla \Bar{\rho}|^2}{\Bar{\rho}}+\red{B_1} \Bar{\rho}\log \Bar{\rho}-\red{B_2}\Bar{\rho}
\end{equation*}
for some \red{constants $B_1,B_2$ to be determined.} We deduce \red{from \eqref{nablawsquare} and \eqref{wlogw}} that
\begin{equation*}
    (\partial_t-\Delta_f)F \leq \red{2A_2}\frac{|\nabla \Bar{\rho}|^2}{\Bar{\rho}}-\red{B_1}\frac{|\nabla \Bar{\rho}|^2}{\Bar{\rho}} \red{\leq} 0,
\end{equation*}
\red{if we let $B_1 \geq 2A_2$.} Also, $F(\cdot, 0) \leq 0$ is equivalent to
\begin{equation*}
    |\nabla \log \bar{\rho}_0|^2+\red{B_1}\log \bar{\rho}_0 \leq \red{B_2}.
\end{equation*}
Thanks to the assumptions \eqref{nablaini} \eqref{iniup}, \red{the left-hand side is bounded above by
\begin{equation*}
    C_1(1+|x|^2)+B_1(\log C_3-C_3^{-1}|x|^2)=(C_1+B_1\log C_3)-(B_1 C_3^{-1}-C_1)|x|^2.
\end{equation*}
Therefore, we can choose $B_1 \geq C_1C_3$ and then choose $B_2 \geq C_1+B_1 \log C_3$ to ensure $F(\cdot,0) \leq 0$.
}

Now we are only left to check the assumption (\ref{condi2}). Recall that $\nabla \Bar{\rho} \in L^\infty$ and $\Bar{\rho} \in L^\infty$, and from Corollary \ref{leastgauss},
\begin{equation*}
    \Bar{\rho}(x,t) \geq \exp(-\red{c_1}(1+t)(1+|x|^2)). 
\end{equation*}
Consequently 
\begin{equation*}
    \int_0^T \int_{B_r} F_+^2(x,t) e^{f(x)} \ud V\ud t \leq Te^{C_T(1+r^2)} \int_{B_r} e^{f(x)} \ud x \leq CTr^2e^{C_T(1+r^2)}
\end{equation*}
since $f=g \ast \Bar{\rho}$ is bounded.
Hence we may choose $\alpha(r)=C_Tr^2(1+|\log r|)$ which satisfies (\ref{condi3}). Applying Theorem \ref{karp} we arrive at $F \leq 0$, or
\begin{equation}\label{F}
    |\nabla \log \Bar{\rho}|^2+\red{B_1}\log \Bar{\rho} \leq \red{B_2}(1+t).
\end{equation}
Recall that
\begin{equation*}
    \log \Bar{\rho} \geq -\red{c_1}(1+t)(1+|x|^2).
\end{equation*}
Substituting that into (\ref{F}) gives (\ref{gradient}) for short time.

In order to obtain the bound (\ref{gradient}) for long time, we need to construct another auxiliary function, say
\begin{equation*}
    F(x,t)=\phi \frac{|\nabla \bar\rho|^2}{\bar\rho}+\rho 
    \log \bar\rho -\red{B_3}\bar\rho,
\end{equation*}
for some $\phi(t)$ to be determined. We deduce \red{from \eqref{nablawsquare} and \eqref{wlogw}} that
\begin{equation*}
    (\partial_t-\Delta_f)F \leq \red{\Big(}\phi^\prime +\frac{\red{2A_2}}{t}\phi-1\red{\Big)} \frac{|\nabla \bar\rho|^2}{\bar\rho}.
\end{equation*}
Hence if we choose $\red{B_3}$ large and $\phi$ satisfying
\begin{equation*}
    \phi^\prime+\frac{\red{2A_2}}{t}\phi-1=0, \quad \phi(0)=0,
\end{equation*}
then the assumption (\ref{condi1}) holds immediately. Obviously $\phi(t)=\frac{t}{\red{2A_2}+1}$ works. The rest of assumptions of Theorem \ref{karp} follow as in the short time case. Hence we arrive at $F \leq 0$, or
\begin{equation}\label{FF}
    |\nabla \log \bar\rho|^2+\frac{\red{2A_2}+1}{t} \log \bar\rho \leq \frac{\red{B_3(2A_2+1)}}{t}.
\end{equation}
Recall that
\begin{equation*}
    \log \Bar{\rho} \geq -\red{c_1}(1+t)(1+|x|^2).
\end{equation*}
Substituting that into (\ref{FF}) gives (\ref{gradient}) for long time.
\end{proof} 

\subsection{Logarithm Hessian Estimate}

Following the previous steps, we can further derive the following logarithm Hessian estimate.
\begin{thm}[Logarithm Hessian Estimate]\label{loghessian}
    Assume that the initial data $\bar{\rho}_0$ satisfies the same conditions as in Theorem \ref{loggradient}, then we have the quadratic growth estimate on the Hessian of $\log \Bar{\rho}$, 
    \begin{equation*}
        |\nabla^2 \log \Bar{\rho}(x,t)| \leq \red{M_2}(1+|x|^2).
    \end{equation*}
\end{thm}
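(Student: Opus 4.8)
The plan is to mirror the proof of Theorem \ref{loggradient} step by step: I would build an auxiliary function $F(x,t)$ that is a subsolution of $\partial_t-\Delta_f$ on $\mathbb{R}^2\times(0,T]$, satisfies $F(\cdot,0)\le 0$, and obeys the integrability bound \eqref{condi2}--\eqref{condi3}; Grigor'yan's maximum principle (Theorem \ref{karp}) then forces $F\le 0$, and unpacking the pointwise inequality, together with the gradient bound $|\nabla\log\bar\rho|\le M_1(1+|x|)$ of Theorem \ref{loggradient} and the Gaussian lower bound of Corollary \ref{leastgauss2}, will give $|\nabla^2\log\bar\rho|\lesssim 1+|x|^2$.

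The only new ingredient is a Bochner-type identity for the normalized Hessian, an analogue of Lemma \ref{cal} whose (elementary but lengthy) proof I would relegate to Appendix \ref{appB}. Differentiating \eqref{vor} twice yields $(\partial_t-\Delta_f)\nabla^2\bar\rho=-(\nabla^2 u)\cdot\nabla\bar\rho-2(\nabla u)\cdot\nabla^2\bar\rho$, which involves only $u,\nabla u,\nabla^2 u$, so the $W^{2,\infty}$ bounds of Lemma \ref{allsobolev} suffice and no third derivative of $u$ is needed. Combining this with the product and quotient rules for $\Delta_f$ and completing squares, one obtains an identity of the form
\[
(\partial_t-\Delta_f)\frac{|\nabla^2\bar\rho|^2}{\bar\rho}=-\frac{2}{\bar\rho}\Big|\nabla^3\bar\rho-\frac{\nabla\bar\rho\otimes\nabla^2\bar\rho}{\bar\rho}\Big|^2-\frac{2}{\bar\rho}\,\nabla^2\bar\rho:\big((\nabla^2u)\cdot\nabla\bar\rho+2(\nabla u)\cdot\nabla^2\bar\rho\big),
\]
and hence, after bounding $\|\nabla u\|_\infty,\|\nabla^2u\|_\infty$ by Lemma \ref{allsobolev} and discarding the good third-order square,
\[
(\partial_t-\Delta_f)\frac{|\nabla^2\bar\rho|^2}{\bar\rho}\le \frac{C}{t\vee 1}\Big(\frac{|\nabla^2\bar\rho|^2}{\bar\rho}+\frac{|\nabla\bar\rho|^2}{\bar\rho}\Big).
\]

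For a fixed horizon $T$ I would then take
\[
F(x,t)=\phi(t)\,\frac{|\nabla^2\bar\rho|^2}{\bar\rho}+B_1\,\bar\rho\log\bar\rho-B_2\,\bar\rho(\log\bar\rho)^2-B_3\,\bar\rho,
\]
with $\phi(t)=\exp\big(-\int_0^t\Lambda(s)\,ds\big)$ and $\Lambda$ the bounded coefficient above, so that $\phi$ is bounded away from $0$ and $\infty$ on $[0,T]$ and the $|\nabla^2\bar\rho|^2/\bar\rho$ term drops out of $(\partial_t-\Delta_f)F$. Using Lemma \ref{cal} and the elementary identity $(\partial_t-\Delta_f)(\bar\rho\,\psi(\log\bar\rho))=-(\psi'+\psi'')(\log\bar\rho)\,|\nabla\bar\rho|^2/\bar\rho$ -- so that the $\bar\rho(\log\bar\rho)^2$ term contributes $B_2(2\log\bar\rho+2)\,|\nabla\bar\rho|^2/\bar\rho$, whose positive part is controlled by $\|\bar\rho_t\|_\infty\le\|\bar\rho_0\|_\infty$ -- and taking $B_1$ large, one gets $(\partial_t-\Delta_f)F\le 0$. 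For $F(\cdot,0)\le 0$ one divides by $\bar\rho_0$ and notes that $|\nabla^2\bar\rho_0|^2/\bar\rho_0^2=|\nabla^2\log\bar\rho_0+\nabla\log\bar\rho_0\otimes\nabla\log\bar\rho_0|^2\le C(1+|x|^2)^2$ by \eqref{nablaini}--\eqref{deltaini}, while $-B_2(\log\bar\rho_0)^2\le -cB_2|x|^4$ for $|x|$ large by the Gaussian upper bound \eqref{iniup}; choosing $B_2$ and then $B_3$ large makes the initial data nonpositive. The integrability hypothesis \eqref{condi2}--\eqref{condi3} is checked exactly as in Theorem \ref{loggradient}, using Corollary \ref{leastgauss2} and the boundedness of $f=g\ast\bar\rho$, with $\alpha(r)=C_Tr^2(1+|\log r|)$. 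Theorem \ref{karp} then gives $F\le 0$, i.e.
\[
\frac{|\nabla^2\bar\rho|^2}{\bar\rho^2}\le\frac{1}{\phi(t)}\big(B_2(\log\bar\rho)^2-B_1\log\bar\rho+B_3\big);
\]
since $|\log\bar\rho|\le C(1+|x|^2)$ for each fixed $t$ (two-sided bound on $\bar\rho$), this yields $|\nabla^2\bar\rho|/\bar\rho\le C(1+|x|^2)$ and therefore $|\nabla^2\log\bar\rho|\le|\nabla^2\bar\rho|/\bar\rho+|\nabla\log\bar\rho|^2\le C(1+|x|^2)$ by Theorem \ref{loggradient}. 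To obtain a constant independent of $t$ one reruns the argument with $t$-dependent weights as in the long-time part of Theorem \ref{loggradient}, now letting a term $\psi(t)\,|\nabla\bar\rho|^2/\bar\rho$ supply the missing $-|\nabla^2\bar\rho|^2/\bar\rho$ via the good second-order square of Lemma \ref{cal}, at the cost of also invoking the Gaussian-type upper bound for $\bar\rho(x,t)$ to kill the $|\nabla\log\bar\rho|^2\,|\nabla\bar\rho|^2/\bar\rho$ term it produces.

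The hard part will be precisely this last interplay. In contrast with Theorem \ref{loggradient}, the error terms of the Bochner computation are proportional to $|\nabla^2\bar\rho|^2/\bar\rho$ itself (through the quadratic interaction $(\nabla u)\cdot\nabla^2\bar\rho$) rather than to lower-order quantities, so they cannot be absorbed into the negative third-order square; one must compensate either through the time weight $\phi$ or through a weighted $|\nabla\bar\rho|^2/\bar\rho$ correction, and the latter feeds back an $|x|^2$-weighted $|\nabla\bar\rho|^2/\bar\rho$ that only the $\bar\rho(\log\bar\rho)^2$ correction -- via a Gaussian decay estimate on $\bar\rho$ -- can absorb. Calibrating $B_1,B_2,B_3$ and the time weights so that the differential inequality, the initial sign, and the integrability bound hold simultaneously is where the actual work lies; each individual computation is routine.
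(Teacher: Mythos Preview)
Your proposal is correct and follows essentially the same route as the paper: the same Bochner inequality for $|\nabla^2\bar\rho|^2/\bar\rho$ (stated there as Lemma \ref{cal2}), the same auxiliary function built from a time-weighted Hessian term together with the corrections $\bar\rho(\log\bar\rho)^2$, $\bar\rho\log\bar\rho$, $\bar\rho$, and the same split into a short-time argument (exponential time weight) and a long-time argument in which a $\psi(t)\,|\nabla\bar\rho|^2/\bar\rho$ term supplies $-|\nabla^2\bar\rho|^2/\bar\rho$ via the good square of Lemma \ref{cal}. One refinement worth flagging for the long-time calibration: the troublesome $2\psi|\nabla\log\bar\rho|^2$ contribution is killed not by the Gaussian upper bound on $\bar\rho$ but by the sharper intermediate estimate \eqref{FF} from the proof of Theorem \ref{loggradient}, namely $t|\nabla\log\bar\rho|^2+(2A_2+1)\log\bar\rho\le C$, which pairs exactly with the $+2B_8\log\bar\rho$ produced by the $\bar\rho(\log\bar\rho)^2$ term once one takes $\psi(t)\propto t$ (and then $\phi(t)\propto t^2$).
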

Notice that
\begin{equation*}
    \nabla^2 \log \Bar{\rho}=\frac{\nabla^2 \Bar{\rho}}{\Bar{\rho}}-\frac{\nabla \Bar{\rho} \otimes \nabla \Bar{\rho}}{\Bar{\rho}^2},
\end{equation*}
hence by Theorem \ref{loggradient} it suffices to control $\frac{\nabla^2 \Bar{\rho}}{\Bar{\rho}}$ by some quadratic function. We first give another lemma similar to Lemma \ref{cal}.
\begin{lemma}\label{cal2}  
	Assume that $\Bar{\rho}$ solves the 2D Navier-Stokes equation \eqref{limit} and recall that $\Delta_f=\Delta+\langle \nabla f, \nabla \rangle$ with $u = - \nabla f = K \ast \Bar{\rho}$. Then
    \begin{align}
        \label{hessianw}&(\partial_t -\Delta_f) \red{\Big(}\frac{|\nabla^2 \Bar{\rho}|^2}{\Bar{\rho}}\red{\Big)} \leq \frac{\red{5A_2}}{t \vee 1}\frac{|\nabla^2 \Bar\rho |^2}{\Bar{\rho}}+\frac{\red{A_2}}{(t \vee 1)^2}\frac{|\nabla \Bar{\rho}|^2}{\Bar{\rho}}.\\
        \label{wlogwsquare}&(\partial_t -\Delta_f) (\Bar{\rho}(\log \Bar{\rho})^2)=-\frac{2}{\Bar{\rho}}|\nabla \Bar{\rho}|^2(1+\log \Bar{\rho}).
    \end{align}
\end{lemma}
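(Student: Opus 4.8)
The plan is to derive both relations from the diffusion-operator calculus attached to $\partial_t\bar\rho=\Delta_f\bar\rho$, in direct parallel with Lemma~\ref{cal}: identity \eqref{wlogwsquare} is an immediate one-line consequence of the chain rule for $\Delta_f$, while \eqref{hessianw} requires differentiating the equation twice and absorbing the resulting ``curvature'' terms via the decay of the velocity derivatives supplied by Lemma~\ref{allsobolev}.

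For \eqref{wlogwsquare}: the chain rule for the operator $\Delta_f$ gives, for any smooth $\phi$,
\[
\Delta_f\big(\phi(\bar\rho)\big)=\phi'(\bar\rho)\,\Delta_f\bar\rho+\phi''(\bar\rho)\,|\nabla\bar\rho|^2 ,
\]
which follows directly from $\Delta(\phi(\bar\rho))=\phi'(\bar\rho)\Delta\bar\rho+\phi''(\bar\rho)|\nabla\bar\rho|^2$ together with $\nabla f\cdot\nabla\phi(\bar\rho)=\phi'(\bar\rho)\,\nabla f\cdot\nabla\bar\rho$. Combined with $\partial_t\phi(\bar\rho)=\phi'(\bar\rho)\,\partial_t\bar\rho=\phi'(\bar\rho)\,\Delta_f\bar\rho$, this yields $(\partial_t-\Delta_f)\phi(\bar\rho)=-\phi''(\bar\rho)\,|\nabla\bar\rho|^2$. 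Taking $\phi(s)=s(\log s)^2$, so that $\phi''(s)=\tfrac{2}{s}(1+\log s)$, gives \eqref{wlogwsquare} at once.

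For \eqref{hessianw}: the backbone is the same algebraic identity that underlies \eqref{nablawsquare} — for any smooth scalar $a$, using only $\partial_t\bar\rho=\Delta_f\bar\rho$ and the product and chain rules,
\[
(\partial_t-\Delta_f)\Big(\frac{a^2}{\bar\rho}\Big)=\frac{2a}{\bar\rho}\,(\partial_t-\Delta_f)a-\frac{2}{\bar\rho}\Big|\nabla a-\frac{a}{\bar\rho}\nabla\bar\rho\Big|^2 .
\]
I would apply this with $a=\partial_k\partial_l\bar\rho$ for each $k,l$ and sum; the squared terms are $\le 0$ and are discarded for the upper bound. The surviving term requires $(\partial_t-\Delta_f)\partial_k\partial_l\bar\rho$, which I would compute by differentiating $\partial_t\bar\rho=\Delta\bar\rho+\nabla f\cdot\nabla\bar\rho$ twice; since $\nabla$ does not commute with $\Delta_f$ this produces
\[
(\partial_t-\Delta_f)\partial_k\partial_l\bar\rho=\nabla\partial_k\partial_l f\cdot\nabla\bar\rho+\nabla\partial_k f\cdot\nabla\partial_l\bar\rho+\nabla\partial_l f\cdot\nabla\partial_k\bar\rho ,
\]
where $\nabla f=-u=-K\ast\bar\rho$, so that the only velocity-dependent quantities entering are $\nabla^2 f=-\nabla u$ and $\nabla^3 f=-\nabla^2 u$. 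Contracting against $\partial_k\partial_l\bar\rho/\bar\rho$ and summing, Cauchy--Schwarz bounds the result by $\tfrac{C}{\bar\rho}\big(\|\nabla u\|_\infty|\nabla^2\bar\rho|^2+\|\nabla^2 u\|_\infty\,|\nabla\bar\rho|\,|\nabla^2\bar\rho|\big)$; inserting $\|\nabla u\|_\infty\le\frac{A_2}{t\vee 1}$ and $\|\nabla^2 u\|_\infty\le\frac{A_2}{(t\vee 1)^{3/2}}$ from Lemma~\ref{allsobolev} and using Young's inequality in the form $\frac{|\nabla^2\bar\rho|}{(t\vee 1)^{1/2}}\cdot\frac{|\nabla\bar\rho|}{t\vee 1}\le\frac12\big(\frac{|\nabla^2\bar\rho|^2}{t\vee 1}+\frac{|\nabla\bar\rho|^2}{(t\vee 1)^2}\big)$ produces exactly $\frac{5A_2}{t\vee 1}\frac{|\nabla^2\bar\rho|^2}{\bar\rho}+\frac{A_2}{(t\vee 1)^2}\frac{|\nabla\bar\rho|^2}{\bar\rho}$.

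The one point needing care is the tensor bookkeeping: tracking the contractions so that the two Hessian--velocity terms contribute the coefficient $4A_2$ and the Hessian--$\nabla^3 f$ term the remaining $A_2$ (after Young) in front of $\frac{|\nabla^2\bar\rho|^2}{\bar\rho}$, matching the stated $5A_2$. Conceptually there is no obstacle: all the non-elliptic content is concentrated in the commutator $[\nabla,\Delta_f]$, i.e.\ in the Bakry--\'Emery curvature $-\nabla^2 f=\nabla u$ of the weighted Laplacian, which Lemma~\ref{allsobolev} controls together with its first derivative; the remaining manipulations are the elementary computations I would record in Appendix~\ref{appB}.
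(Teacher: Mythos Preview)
Your proposal is correct and follows essentially the same route as the paper's Appendix~\ref{appB}: isolate the nonpositive square $-\tfrac{2}{\bar\rho}\bigl|\nabla^3\bar\rho-\tfrac{\nabla\bar\rho\otimes\nabla^2\bar\rho}{\bar\rho}\bigr|^2$, then bound the three velocity-commutator terms by $\|\nabla u\|_\infty$ and $\|\nabla^2 u\|_\infty$ from Lemma~\ref{allsobolev} to get the $4A_2+A_2=5A_2$ split, and handle \eqref{wlogwsquare} by direct expansion. Your packaging via the general identity $(\partial_t-\Delta_f)(a^2/\bar\rho)=\tfrac{2a}{\bar\rho}(\partial_t-\Delta_f)a-\tfrac{2}{\bar\rho}|\nabla a-\tfrac{a}{\bar\rho}\nabla\bar\rho|^2$ and the chain rule $(\partial_t-\Delta_f)\phi(\bar\rho)=-\phi''(\bar\rho)|\nabla\bar\rho|^2$ is a bit tidier than the paper's raw coordinate computation, but the content is identical.
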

The proof is based on direct calculations, which we put into Section \ref{appB}.

We can proceed to prove Theorem \ref{loghessian}. 

\begin{proof}[Proof of Theorem \ref{loghessian}]
	
Firstly, we define
\begin{equation*}
    F(x,t)=e^{-\red{B_4}t}\frac{|\nabla^2 \Bar{\rho}|^2}{\Bar{\rho}}-\red{B_5}\bar{\rho}(\log \Bar{\rho})^2+\red{B_6} \Bar{\rho}\log \Bar{\rho}-\red{B_7} \Bar{\rho}
\end{equation*}
\red{for some constants $B_4,B_5,B_6,B_7$ to be determined}. We deduce \red{from \eqref{hessianw} and \eqref{wlogwsquare}} that
\begin{equation*}
    (\partial_t-\Delta_f)F \leq \red{5A_2}e^{-\red{B_4}t}\frac{|\nabla^2 \Bar{\rho}|^2}{\Bar{\rho}}+\red{A_2}e^{-\red{B_4}t}\frac{|\nabla \Bar{\rho}|^2}{\Bar{\rho}}-\red{B_4}e^{-\red{B_4}t}\frac{|\nabla^2 \Bar{\rho}|^2}{\Bar{\rho}}+\red{2B_5}\frac{|\nabla \Bar{\rho}|^2}{\Bar{\rho}}(1+\log \Bar{\rho})-\red{B_6}\frac{|\nabla \Bar{\rho}|^2}{\Bar{\rho}}\leq 0,
\end{equation*}
\red{if we let $B_4 \geq 5A_2$ and $B_6 \geq A_2+2B_5(1+\log_+ \|\bar\rho_0\|_\infty)$.} Also, $F(\cdot, 0) \leq 0$ is equivalent to
\begin{equation*}
    \frac{|\nabla^2 \bar{\rho}_0|^2}{\bar{\rho}_0^2}+\red{B_6}\log \bar{\rho}_0 \leq \red{B_7}+\red{B_5}(\log \bar{\rho}_0)^2.
\end{equation*}
\red{We recall the initial assumptions:
\begin{equation*}
    |\nabla \log \bar\rho_0(x)|^2 \leq C_1(1+|x|^2), \quad |\nabla^2 \log \bar\rho_0(x)| \leq C_2(1+|x|^2), \quad \bar\rho_0(x) \leq C_3 \exp(-C_3^{-1}|x|^2).
\end{equation*}
These allow us to choose $B_5$ large enough to eliminate the spatial growth in $|\nabla^2 \bar\rho_0|^2/\bar\rho_0^2$, and then choose $B_6$ large enough to satisfy $B_6 \geq A_2+2B_5(1+\log_+ \|\bar\rho_0\|_\infty)$ as previously shown, and finally choose $B_7$ large enough to make the above inequality valid.
}

Now we only need to check the assumption \eqref{condi2}. Recall that $\nabla^2 \Bar{\rho} \in L^\infty$, $\nabla \Bar{\rho} \in L^\infty$ and $\Bar{\rho} \in L^\infty$, and from Corollary \ref{leastgauss}:
\begin{equation}\label{leastgaussian2}
    \Bar{\rho}(x,t) \geq \exp(-\red{c_1}(1+t)(1+|x|^2)).
\end{equation}
Those imply  that 
\begin{equation*}
    \int_0^T \int_{B_r} F_+^2(x,t) e^{f(x)} \ud V\ud t \leq CTe^{C(1+T)(1+r^2)} \int_{B_r} e^{f(x)} \ud x \leq CTr^2e^{C_Tr^2}
\end{equation*}
since $f=g \ast \Bar{\rho}$ is bounded.
Hence we may choose $\alpha(r)=C_Tr^2(1+|\log r|)$ as well which satisfies \eqref{condi3}. Applying Theorem \ref{karp} we arrive at $F \leq 0$, or
\begin{equation}\label{F2}
    \frac{|\nabla^2 \Bar{\rho}|^2}{\Bar{\rho}^2} \leq e^{\red{B_4}T}( \red{B_5}(\log \Bar{\rho})^2-\red{B_6}\log \Bar{\rho}+\red{B_7}).
\end{equation}
Recall that
\begin{equation*}
    \log \Bar{\rho} \geq -\red{c_1}(1+t)(1+|x|^2).
\end{equation*}
\red{Also there exists a constant $c_2$ such that
\begin{equation*}
    \log \Bar{\rho} \leq c_2(1+t)(1+|x|^2)
\end{equation*}
which results from Lemma \ref{mostgauss} below.
}
Substituting those into \eqref{F2} gives the result for a short time.

For the long time case, we need to make use of the non-positive term which is abandoned in \eqref{nablawsquare} and construct another auxiliary function. Rewrite \eqref{nablawsquare} as
\begin{equation}
    (\partial_t-\Delta_f)\red{\Big(}\frac{|\nabla \bar\rho|^2}{\bar\rho}\red{\Big)} \leq -\frac{|\nabla^2 \bar\rho|^2}{\bar\rho}+2\frac{|\nabla \bar\rho|^4}{\bar\rho^3}+\frac{\red{2A_2}}{t \vee 1}\frac{|\nabla \bar\rho|^2}{\bar\rho},
\end{equation}
and define
\begin{equation*}
    F(x,t)=\phi \frac{|\nabla^2 \bar\rho|^2}{\bar\rho}+\psi \frac{|\nabla \bar\rho|^2}{\bar\rho}-\red{B_8}\bar\rho(\log \bar\rho)^2+\red{B_9}\bar\rho \log \bar\rho -\red{B_{10}}\bar\rho,
\end{equation*}
for some auxiliary functions $\phi(t)$ and $\psi(t)$ \red{and constants $B_8,B_9,B_{10}$} to be determined. We deduce \red{from \eqref{hessianw} and \eqref{wlogwsquare} }that
\begin{align*}
    (\partial_t-\Delta_f)F \leq &\, \red{\Big(}\phi^\prime+\frac{\red{5A_2}}{t}\phi-\psi\red{\Big)}\frac{|\nabla^2 \bar\rho|^2}{\bar\rho}\\
    +&\, \red{\Big(}\frac{\red{A_2}}{t^2}\phi+\psi^\prime+\frac{\red{2A_2}}{t}\psi+2\psi|\nabla \log \bar\rho|^2+2\red{B_8}\log \bar\rho+2\red{B_8}-\red{B_9}\red{\Big)}\frac{|\nabla \bar\rho|^2}{\bar\rho}.
\end{align*}
Also we set
\begin{equation*}
    \phi(0)=\psi(0)=0.
\end{equation*}
Recall the proof of Theorem \ref{loggradient}, we obtain for $\psi(t)=\frac{\red{B_8}t}{\red{2A_2}+1}$ that
\begin{equation*}
    2\psi|\nabla \log \bar\rho|^2+2\red{B_8}\log \bar\rho \leq \red{2B_8B_3}.
\end{equation*}
Now letting $\phi(t)=\frac{\red{B_8}t^2}{\red{(2A_2+1)(5A_2+2)}}$, we are able to cancel the first term on the right-hand side. Pick $\red{B_9}$ large to cancel the second term. The initial condition is also satisfied for large $\red{B_{10}}$. The assumption \eqref{condi3} is verified similarly as in the short time case.

Hence we arrive at $F \leq 0$, or
\begin{equation}\label{t2}
    t^2 \frac{|\nabla^2 \bar\rho|^2}{\bar\rho^2} \leq \red{B}(\log \bar\rho)^2+\red{B}
\end{equation}
\red{for some large constant $B$.} Recall that
\begin{equation*}
    \log \Bar{\rho} \geq -\red{c_1}(1+t)(1+|x|^2) \text{ and } \log \Bar{\rho} \leq \red{c_2}(1+t)(1+|x|^2).
\end{equation*}
Substituting those into \eqref{t2} gives the result for the long time.
\end{proof}

Finally we present the Gaussian-type decay from above of the solution.
\begin{lemma}[Gaussian-type decay II]\label{mostgauss}
    Under the same assumptions of the initial vorticity $\bar{\rho}_0$ as in Theorem \ref{loggradient}, one has the Gaussian upper bound for $\Bar{\rho}(x,t)$ for any time $t$, i.e. \red{there exists a constant $C_2'$ such that}
    \begin{equation*}
        \Bar{\rho}(x,t) \leq \frac{\red{C_2'}}{t \vee 1} \exp\red{\Big(}-\frac{|x|^2}{8t+\red{C_2'}}\red{\Big)}.
    \end{equation*}
    \red{Here the constant $C_2'$ depends on $C_3$.}
\end{lemma}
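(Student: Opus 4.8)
The plan is to run a parabolic comparison argument for the advection--diffusion equation $\partial_t\bar\rho=\Delta_f\bar\rho=\Delta\bar\rho-(K\ast\bar\rho)\cdot\nabla\bar\rho$ (recall $\udiv(K\ast\bar\rho)=0$ and $\nabla f=-(K\ast\bar\rho)=-u$) against an explicit self-similar Gaussian barrier, absorbing the drift by the decay $\|u_t\|_\infty\le A_1/\sqrt{t\vee1}$ of Lemma~\ref{kconw}. Concretely I would fix a barrier of the form
\[
W(x,t)=\frac{a}{t+b}\exp\Big(-\frac{|x|^2}{8(t+b)}\Big),\qquad a,b>0 \text{ to be chosen},
\]
and also record the two elementary $L^\infty$ facts for $\bar\rho$: the maximum principle with \eqref{iniup} gives $\|\bar\rho_t\|_\infty\le\|\bar\rho_0\|_\infty\le C_3$, while the $L^1\!\to\!L^\infty$ smoothing/decay for the $2$D Navier--Stokes vorticity (see \cite{ben1994global,kato1994navier}) gives $\|\bar\rho_t\|_\infty\lesssim 1/t$; together these yield $\|\bar\rho_t\|_\infty\le A_0/(t\vee1)$ for some $A_0=A_0(C_3)$.

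The crux is the supersolution property of $W$ in a self-similar exterior region. A direct computation gives
\[
(\partial_t-\Delta_f)W=\frac{W}{t+b}\Big[\tfrac12\Big(\tfrac{|x|^2}{8(t+b)}-1\Big)-\tfrac14\,u\cdot x\Big],\qquad u=K\ast\bar\rho .
\]
Writing $\phi=\frac{|x|^2}{8(t+b)}$ and bounding $|u\cdot x|\le \frac{A_1}{\sqrt{t\vee1}}|x|\le A_1\sqrt{8(1+b)}\,\sqrt{\phi}$ (using $(t+b)/(t\vee1)\le 1+b$), the bracket is nonnegative once $\phi\ge\phi_0$, where $\phi_0=\phi_0(A_1,b)$ is the larger root of the quadratic $\tfrac12(s-1)=\tfrac14 A_1\sqrt{8(1+b)}\,\sqrt{s}$ in $\sqrt s$. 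Hence $W$ is a supersolution of $\partial_t-\Delta_f$ on the self-similar exterior region $\Omega:=\{(x,t):|x|\ge\kappa\sqrt{t+b},\ 0\le t\le T\}$ with $\kappa:=\sqrt{8\phi_0}$. The choice of width $8>4$ is precisely what makes the ``good'' first term grow like $\phi$ and so beat the $\sqrt\phi$-drift; the bare heat width $4$ would kill this term and leave the uncontrollable drift.

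Next I would pin down the constants so that $W\ge\bar\rho$ on the parabolic boundary $\partial_p\Omega$. On the lateral part $\{|x|=\kappa\sqrt{t+b}\}$ one has $W=\frac{a}{t+b}e^{-\kappa^2/8}$ and $\bar\rho\le \frac{A_0}{t\vee1}\le\frac{A_0(1+b)}{t+b}$, so $a\ge A_0(1+b)e^{\kappa^2/8}$ suffices; on the initial part $\{t=0,\ |x|\ge\kappa\sqrt b\}$, choosing $b$ with $8b\ge C_3$ gives $W(x,0)=\frac{a}{b}e^{-|x|^2/(8b)}\ge\frac{a}{b}e^{-|x|^2/C_3}\ge C_3 e^{-|x|^2/C_3}\ge\bar\rho_0(x)$ provided $a\ge bC_3$. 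With these choices the comparison on the \emph{unbounded} domain $\Omega$ is handled by a Phragm\'en--Lindel\"of device: $\Psi(x,t)=e^{\lambda t}(1+|x|^2)$ with $\lambda\ge A_1+4$ is a global supersolution (again using $\|u_t\|_\infty\le A_1$ to bound $2u\cdot x$ by $A_1(1+|x|^2)$), so for every $\eps>0$ the function $\bar\rho-W-\eps\Psi$ is a subsolution of $\partial_t-\Delta_f$ on $\Omega$, is $\le0$ on $\partial_p\Omega$, and tends to $-\infty$ as $|x|\to\infty$ uniformly on $[0,T]$ since $\bar\rho$ is bounded while $\eps\Psi\to\infty$; the parabolic maximum principle on a spatial exhaustion then forces $\bar\rho-W-\eps\Psi\le0$ on $\Omega$, and $\eps\to0$ gives $\bar\rho\le W$ on $\Omega$.

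Finally I would set $b:=\max(1,C_3/8)$, then $\phi_0,\kappa$ are determined, then $a:=\max(bC_3,\,A_0(1+b)e^{\kappa^2/8})$, and $C_2':=\max(8b,a)$. For $|x|\ge\kappa\sqrt{t+b}$ the desired bound follows from $\bar\rho\le W$ together with $a\le C_2'$, $8b\le C_2'$ and $t+b\ge t\vee1$. For $|x|\le\kappa\sqrt{t+b}$ one uses that $t\mapsto(t+b)/(8t+C_2')$ is nondecreasing with limit $1/8$ once $C_2'\ge 8b$, hence $\frac{|x|^2}{8t+C_2'}\le\frac{\kappa^2(t+b)}{8t+C_2'}\le\frac{\kappa^2}{8}$, so $\frac{C_2'}{t\vee1}\exp(-\frac{|x|^2}{8t+C_2'})\ge\frac{C_2'}{t\vee1}e^{-\kappa^2/8}\ge\frac{A_0}{t\vee1}\ge\bar\rho(x,t)$ by $C_2'e^{-\kappa^2/8}\ge A_0$ (which is implied by $a\ge A_0(1+b)e^{\kappa^2/8}$). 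Since $\|\bar\rho_0\|_\infty\le C_3$ and $A_0,A_1$ depend only on $\|\bar\rho_0\|_\infty$, the constant $C_2'$ depends only on $C_3$, as claimed. I expect the main obstacle to be the supersolution verification on $\Omega$: getting the drift estimate sharp enough and exploiting that the Gaussian width is strictly above the heat value $4$; by contrast, the unbounded-domain comparison and the split into interior/exterior regions are routine once the quadratic barrier $\Psi$ is in place. (One cannot simply apply Theorem~\ref{karp} to $\bar\rho-W$ on all of $\R^2$, because no Gaussian-type barrier is a global supersolution — near the origin $W$ is in fact a subsolution — which is exactly why the self-similar split is needed.)
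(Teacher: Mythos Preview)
Your barrier/comparison argument is correct and complete, but it takes a genuinely different route from the paper. The paper's proof is essentially one line: it invokes the Carlen--Loss pointwise heat-kernel bound \cite[Theorem~3]{carlen1996optimal}, which says the vorticity satisfies
\[
\bar\rho(x,t)\le C\int_{\R^2}\frac{1}{t}\exp\Big(-\frac{|x-y|^2}{8t}\Big)\bar\rho_0(y)\,dy
\]
with a universal constant $C$, and then convolves this Gaussian against the assumed initial Gaussian bound \eqref{iniup} by completing the square.

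Your approach avoids that black box entirely: you build an explicit self-similar barrier $W$, verify it is a supersolution of $\partial_t-\Delta_f$ on a conical exterior region using only the velocity decay $\|u_t\|_\infty\le A_1/\sqrt{t\vee1}$, handle the unbounded domain by a Phragm\'en--Lindel\"of device, and then patch the interior region using the cruder $L^\infty$ decay $\|\bar\rho_t\|_\infty\le A_0/(t\vee1)$. This is longer but more self-contained, and it makes transparent \emph{why} the width $8$ (rather than the heat value $4$) appears: it is exactly the slack needed so that the positive $\tfrac12\phi$ term in your bracket survives to dominate the $\sqrt\phi$ drift. By contrast, in the paper's proof the factor $8$ is inherited from Carlen--Loss, where it arises from a Nash-type argument. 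Both routes ultimately rest on the same ingredients (velocity decay and maximum principle), but yours carries them out explicitly while the paper outsources them to \cite{carlen1996optimal}.
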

\begin{proof}
    This estimate is a direct combination of the initial condition and the pointwise estimate given in \cite[Theorem 3]{carlen1996optimal}, \red{from which we have for some universal constant $C$,}
    \begin{equation}
        \bar\rho(x,t) \leq C \int \frac{1}{t} \exp\red{\Big(}-\frac{|x-y|^2}{8t}\red{\Big)} \bar\rho_0(y) \ud y.
    \end{equation}
    By the initial condition \eqref{iniup}
    \begin{equation*}
        \bar\rho_0(y) \leq \red{C_3}\exp(-\red{C_3}^{-1}|y|^2),
    \end{equation*}
    we calculate that
    \begin{align*}
        \bar\rho(x,t) &\leq C\red{C_3} \int \frac{1}{t} \exp\red{\Big(}-\frac{\red{C_3}|x-y|^2+8t|y|^2}{8t\red{C_3}} \red{\Big)} \ud y\\
        &\leq C\red{C_3}\int \frac{1}{t} e^{-\frac{|x|^2}{8t+\red{C_3}}} \exp \red{\Big(}-\frac{8t+\red{C_3}}{8t\red{C_3}}|y|^2 \red{\Big)} \ud y\\
        &\leq \frac{\red{C_2'}}{t \vee 1} \exp\red{\Big(}-\frac{|x|^2}{8t+\red{C_2'}} \red{\Big)},
    \end{align*}
    \red{for some constant $C_2'$.}
\end{proof}

\section{Proof of the Main Result}
\label{section5}
Now we are ready to give the proof of our main result. The proof mainly follows from the idea of \cite{jabin2018quantitative}, controlling the time derivative of relative entropy by the relative entropy itself and some extra small terms. The result comes from the classical Gr\"onwall argument.

Firstly, adapting the proof of Lemma 2 in \cite{jabin2018quantitative}, simply changing the domain from the torus $\mathbb{T}^d$ to the whole space $\mathbb{R}^2$,  we obtain the time evolution of the relative entropy. Hereinafter we may use the convention that $K(0)=0$.

\begin{lemma}\label{propagation} Assume that $\rho_N$ is an entropy solution as per Definition \ref{entropysolution}. Assume that $\bar \rho \in W^{2, \infty } ([0, T] \times \mathbb{R}^2)$ solves the limit PDE \eqref{limit} with $\int_{\mathbb{R}^2} \bar \rho(t, x) \ud x =1$ and $\bar \rho(t, x) \geq 0$. Then we have:
\begin{equation*}
    \begin{split}
         & H_N(\rho_N|\bar{\rho}_N)(t)  = \frac 1 N \int_{\mathbb{R}^{2N}} \rho_N(t, X^N) \log \frac{\rho_N(t, X^N)}{\bar \rho_N (t, X^N)} \ud X^N  \\ & \leq H_N(\rho_N^0 \vert \bar \rho_N^0)  -\frac{1}{N^2} \sum_{i,j=1}^N \int_0^t  \int_{\mathbb{R}^{2N}} \rho_N \Big(K(x_i-x_j)-K \ast \bar{\rho}(x_i) \Big) \cdot \nabla \log \bar{\rho}(x_i) \ud X^N \ud s \\
         & \qquad  \qquad \qquad  - \frac{\sigma}{N} \sum_{i=1}^N \int_0^t \int_{\mathbb{R}^{2N}} \rho_N \red{\Big|}\nabla_{x_i} \log \frac{\rho_N}{ \bar \rho_N}\red{\Big|}^2 \ud X^N \ud s, 
         \end{split}
    \end{equation*}
 where we recall that $\bar \rho_N(t, X^N) = \prod_{i=1}^N \bar \rho(t, x_i)$.   
\end{lemma}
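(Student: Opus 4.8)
The plan is to adapt the relative-entropy computation of Jabin and Wang \cite[Lemma 2]{jabin2018quantitative} from $\mathbb{T}^2$ to $\mathbb{R}^2$. Writing $\log\bar\rho_N(s,X^N)=\sum_{i=1}^N\log\bar\rho(s,x_i)$ and $h_N:=\log\frac{\rho_N}{\bar\rho_N}$, I would split
\[
N H_N(\rho_N|\bar\rho_N)(t)=\int_{\mathbb{R}^{2N}}\rho_N\log\rho_N\,\ud X^N-\sum_{i=1}^N\int_{\mathbb{R}^{2N}}\rho_N\,\log\bar\rho(t,x_i)\,\ud X^N
\]
and treat the two pieces separately. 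For the first, Definition \ref{entropysolution} directly gives, for a.e.\ $t\le T$,
\[
\int_{\mathbb{R}^{2N}}\rho_N(t)\log\rho_N(t)\,\ud X^N\le\int_{\mathbb{R}^{2N}}\rho_N^0\log\rho_N^0\,\ud X^N-\sigma\sum_{i=1}^N\int_0^t\int_{\mathbb{R}^{2N}}\frac{|\nabla_{x_i}\rho_N|^2}{\rho_N}\,\ud X^N\,\ud s.
\]
For the second, I would use the weak formulation of the Liouville equation \eqref{liouville} tested against the time-dependent function $\log\bar\rho(s,x_i)$, together with the fact that $\bar\rho$ solves \eqref{truelimit}.

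Integrating by parts in $x_i$ (one derivative for the transport term, two for the diffusion term; only the $k=i$ summand survives since $\log\bar\rho(s,x_i)$ is independent of the other variables) yields
\[
\frac{\ud}{\ud s}\int\rho_N\log\bar\rho(s,x_i)\,\ud X^N=\int\rho_N\Big(\frac{1}{N}\sum_{j\neq i}K(x_i-x_j)\Big)\cdot\nabla\log\bar\rho(x_i)+\sigma\int\rho_N\,\Delta\log\bar\rho(x_i)+\int\rho_N\,\partial_s\log\bar\rho(x_i).
\]
Since $\udiv K=0$, equation \eqref{truelimit} gives $\partial_s\log\bar\rho=\sigma\Delta\log\bar\rho+\sigma|\nabla\log\bar\rho|^2-(K\ast\bar\rho)\cdot\nabla\log\bar\rho$, while $\frac{|\nabla_{x_i}\rho_N|^2}{\rho_N}=\rho_N|\nabla_{x_i}h_N+\nabla\log\bar\rho(x_i)|^2$. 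Substituting these two identities into the splitting, summing over $i$, expanding the square, and performing one further integration by parts (turning $-2\sigma\int\nabla_{x_i}\rho_N\cdot\nabla\log\bar\rho(x_i)$ into $2\sigma\int\rho_N\Delta\log\bar\rho(x_i)$), every term containing $\Delta\log\bar\rho$ or $|\nabla\log\bar\rho|^2$ cancels, leaving exactly the relative Fisher information $-\sigma\sum_i\int\rho_N|\nabla_{x_i}h_N|^2$ plus the transport error $-\sum_i\int\rho_N\big(\frac{1}{N}\sum_{j\neq i}K(x_i-x_j)-K\ast\bar\rho(x_i)\big)\cdot\nabla\log\bar\rho(x_i)$. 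Dividing by $N$, integrating over $[0,t]$, and using that, under the convention $K(0)=0$,
\[
\frac{1}{N}\sum_{i=1}^N\Big(\frac{1}{N}\sum_{j\neq i}K(x_i-x_j)-K\ast\bar\rho(x_i)\Big)=\frac{1}{N^2}\sum_{i,j=1}^N\big(K(x_i-x_j)-K\ast\bar\rho(x_i)\big),
\]
gives the claimed inequality.

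I expect the main obstacle to be rigour rather than algebra, precisely at the points where the non-compactness of $\mathbb{R}^2$ enters. First, one must check that $H_N(\rho_N|\bar\rho_N)$ is well defined and that $\log\bar\rho(s,\cdot)$ is an admissible test function, i.e.\ $\int\rho_N|\log\bar\rho_N|<\infty$ and no boundary terms survive at spatial infinity in any integration by parts; this follows from the two-sided Gaussian bounds on $\bar\rho$ (Corollary \ref{leastgauss2}, Lemma \ref{mostgauss}) and the growth bounds $|\nabla\log\bar\rho|\lesssim 1+|x|$, $|\nabla^2\log\bar\rho|\lesssim 1+|x|^2$ of Theorems \ref{loggradient} and \ref{loghessian}, so that all $\log\bar\rho$-dependent integrands grow only polynomially against the fast-decaying $\rho_N$-measure. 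Second, the singular kernel $K$ must be handled in the integrations by parts: as in \cite{jabin2018quantitative} one regularizes $K$, uses $\udiv K=0$ together with the decomposition $K=K_1+K_2$ with $K_1\in L^\infty$ and $K_2\in L^1$ from Lemma \ref{kconw} and the finiteness of the relative Fisher information coming from the entropy inequality, and passes to the limit. Third, since $\rho_N$ is only an entropy solution, the argument yields an inequality rather than an identity, which is why the conclusion is stated with ``$\le$''. Aside from these checks the computation is identical to the torus case of \cite{jabin2018quantitative}.
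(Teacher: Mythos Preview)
Your proposal is correct and follows exactly the approach the paper takes: the paper does not give a detailed argument but simply states that the proof of Lemma~2 in \cite{jabin2018quantitative} carries over verbatim once the domain is changed from $\mathbb{T}^d$ to $\mathbb{R}^2$, and what you have written is precisely that computation together with the whole-space justifications. Your identification of the rigour points (admissibility of $\log\bar\rho$ as a test function via the Gaussian bounds and Theorems~\ref{loggradient}--\ref{loghessian}, handling of the singular $K$ by regularization, and the inequality rather than equality coming from the entropy-solution definition) is exactly what the domain change demands.
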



The last term is currently abandoned since it is obviously non-positive, but we mention that, as in \cite{guillin2024uniform}, one may make use of this term by treating it as the Fisher information and applying the logarithmic Sobolev inequality. This may produce an extra negative term of time integral of the relative entropy.

The fact that $\bar\rho_N$ satisfies some LSI can be derived by the tensor property and directly examining the McKean-Vlasov SDE \eqref{mckeanvlasov}, but the LSI term is not large enough to overwhelm the other relative entropy term produced by the following steps. We leave the uniform-in-time result in our future study.

\red{After the submission of this article, such an LSI for the solution to the vorticity equation on the whole space with quadratic confinement has been established in \cite{monmarche2024time}.}

Now since $K$ is odd, we can adapt the classical symmetrization trick (see the proof in Theorem 2 of \cite{jabin2018quantitative}) to obtain the following inequality. 
\[
\begin{split}
  &\, H_N(\rho_N|\bar{\rho}_N)(t)  \\   \leq  &\,  H_N(\rho_N^0 \vert \bar \rho_N^0)      -  \frac{1}{N^2} \sum_{i,j=1}^N \int_0^t  \int_{\mathbb{R}^{2N}} \rho_N \Big(K(x_i-x_j)-K \ast \bar{\rho}(x_i) \Big) \cdot \nabla \log \bar{\rho}(x_i) \ud X^N \ud s \\ 
  = &\,   H_N(\rho_N^0 \vert \bar \rho_N^0)    - \int_0^t  \int_{\mathbb{R}^{2N}} \rho_N \Big( \frac{1}{N^2} \sum_{i, j =1}^N \phi (x_i, x_j)   \Big)  \ud X^N \ud s,  \\
\end{split}
\]
where the function $\phi$ is defined as 
\begin{equation}\label{phi}
    \phi(x,y)= \frac 1 2  K \ast \bar{\rho} (x) \cdot \nabla \log \bar{\rho}(x)+ \frac 1 2 K \ast \bar{\rho} (y) \cdot \nabla \log \bar{\rho}(y)- \frac 1 2 K(x-y) \cdot (\nabla \log \bar{\rho}(x)-\nabla \log \bar{\rho}(y)).
\end{equation}
For simplicity, we also write 
\begin{equation*}
    \Phi_N (x_1,\cdots,x_N)=\frac{1}{N^2} \sum_{i,j=1}^N \phi(x_i,x_j).
\end{equation*}

In order to change the last term into an expectation with respect to the tensorized distribution $\bar\rho_N$, we recall the famous Donsker-Varadhan inequality as in \cite[Lemma 1]{jabin2018quantitative}: for any $\eta(t) > 0$, 
\begin{equation*}
    \int_{\mathbb{R}^{2N}}   \rho_N \Phi_N  \ud X^N \leq \frac{1}{\eta}\Big(H_N(\rho_N|\bar{\rho}_N)+\frac{1}{N}\log \int_{\R^{2N}} \bar{\rho}_N \exp(N \eta \Phi_N) \ud X^N \Big). 
\end{equation*}
Hence we have the further estimate:
\begin{align*}
    H_N(\rho_N|\bar{\rho}_N)(t) \leq &\, H_N(\rho_N^0|\bar{\rho}_N^0)+\int_0^t \frac{1}{\eta(s)} H_N(\rho_N|\bar{\rho}_N)(s) \ud s\\
    +&\, \frac{1}{N}\int_0^t \frac{1}{\eta(s)} \log \int_{\R^{2N}} \bar\rho_N \exp(\eta N\Phi_N) \ud X^N \ud s.
\end{align*}
Now it suffices to bound the exponential integral
\begin{equation*}
    \int_{\mathbb{R}^{2N}}  \bar{\rho}_N \exp(N\eta \Phi_N)  \ud X^N.
\end{equation*}
Once again we recall the large deviation type estimate \cite[Theorem 4]{jabin2018quantitative}.
\begin{thm}[Jabin-Wang]\label{deviation}
    Consider any $\phi(x,y)$ satisfying the canceling properties
    \begin{equation}\label{cancel}
        \int_{\mathbb{R}^2}  \phi(x,y) \bar{\rho}(x)  \ud x=0, \,  \mbox{for any\,  } y, \quad \int_{\mathbb{R}^2}  \phi(x,y) \bar{\rho}(y) \ud y=0,  \, \mbox{for any\,  } x,
    \end{equation}
    and \red{there exists a universal constant $C_{JW}=1600^2+36e^4$ such that}
    \begin{equation}\label{gamma}
        \gamma=\red{C_{JW}} \Big(\sup_{p \geq 1} \frac{\Vert \sup_y |\phi(\cdot, y)| \Vert_{L^p(\bar{\rho} \ud x)}}{p} \Big)^2 <1.
    \end{equation}
    Then we have
    \begin{equation*}
        \int_{\mathbb{R}^{2N}}  \bar{\rho}_N \exp\Big(\frac{1}{N} \sum_{i,j=1}^N \phi(x_i,x_j)\Big) \ud X^N \leq \frac{2}{1-\gamma} <\infty.
    \end{equation*}
\end{thm}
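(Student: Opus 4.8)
The plan is to establish the bound by the combinatorial power-series argument of Jabin--Wang: expand the exponential, expand each power into a sum over index configurations, use the two cancellation identities \eqref{cancel} to discard the overwhelming majority of them, and bound the remainder by a series that converges precisely when $\gamma<1$.

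First I would write the Taylor expansion
\[
\int_{\mathbb{R}^{2N}} \bar\rho_N \exp\Big(\tfrac1N\sum_{i,j=1}^N \phi(x_i,x_j)\Big)\ud X^N = \sum_{k=0}^\infty \frac{1}{k!\,N^k} \int_{\mathbb{R}^{2N}} \bar\rho_N \Big(\sum_{i,j=1}^N \phi(x_i,x_j)\Big)^k \ud X^N,
\]
and expand the $k$-th power as $\sum \prod_{\ell=1}^k \phi(x_{a_\ell},x_{b_\ell})$ over all $(a_\ell,b_\ell)_{\ell\le k}\in\{1,\dots,N\}^{2k}$, so that each of the $N^{2k}$ terms corresponds to a directed multigraph with $k$ edges on the vertex set $\{1,\dots,N\}$. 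I would group these terms by the partition $\mathcal P$ of the $2k$ argument-slots recording which slots carry the same particle label: if $\mathcal P$ has $p$ blocks there are $N(N-1)\cdots(N-p+1)\le N^p$ ways to assign distinct labels, and all graphs in one group share the same value of the corresponding integral.

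The decisive point is that \eqref{cancel} annihilates every configuration with a \emph{leaf}, i.e.\ a particle occurring in exactly one of the $k$ factors: integrating that variable against its own $\bar\rho$ factor first produces $\int_{\mathbb{R}^2}\phi(x_a,\cdot)\bar\rho(x_a)\ud x_a=0$ or its mirror image. Hence only partitions all of whose blocks have size $\ge 2$ contribute, and for those $p\le k$, so the combinatorial prefactor $N^p/N^k\le 1$ --- this is exactly what neutralizes the $N$-dependence, with genuine smallness $N^{p-k}$ when $p<k$. For a surviving configuration I would bound the integral $\int \prod_a\bar\rho(y_a)\prod_\ell\phi(\cdot,\cdot)\ud y$ over its $p\le k$ distinct variables by majorizing each factor by $\sup_z|\phi(y_a,z)|$ in its first argument and applying a generalized Hölder inequality; the resulting product of $\|\sup_z|\phi(\cdot,z)|\|_{L^q(\bar\rho\,\ud x)}$-factors is, by hypothesis \eqref{gamma}, at most $(\gamma/C_{JW})^{k/2}$ times an explicit factor depending only on the degree sequence. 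It is essential here that the $L^q$-norm grows only linearly in $q$ --- precisely the content of \eqref{gamma}, the analogue of Gaussian concentration --- since this is what makes the per-term bound decay geometrically in $k$.

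The last step is to collect everything: the number of leaf-free partitions of the $2k$ slots, weighted by the degree-dependent factors coming from Hölder, must be balanced against the $1/k!$ from the Taylor expansion. A careful count --- separating the ``maximal'' configurations, where $p=k$ and every particle has degree exactly $2$, from the strictly lower-order ones --- shows that the $k$-th term is bounded by a quantity of order $\gamma^{k}$, whose sum over $k$ is convergent, giving the stated bound $\tfrac2{1-\gamma}$ for $\gamma<1$ (the factor $2$ arising from adding the geometric series for the maximal terms to the one for the rest); the diagonal contributions $\phi(x_i,x_i)$ are lower order, being themselves controlled by $\sup_y|\phi(\cdot,y)|$, and are absorbed into the constant. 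The main obstacle is exactly this final bookkeeping: one must match the Hölder exponents to the degree sequence and count the admissible multigraph structures sharply enough that the factorial from the Taylor series survives the combinatorial growth and the optimal constant emerges --- this delicate estimate is the technical heart of the Jabin--Wang method, and an alternative route would be to recast it as a cumulant/cluster expansion for the tensorized measure $\bar\rho_N$.
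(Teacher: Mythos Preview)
The paper does not prove this theorem at all: it is quoted verbatim as \cite[Theorem 4]{jabin2018quantitative} and used as a black box, so there is no ``paper's own proof'' to compare against. Your sketch is a faithful outline of the original Jabin--Wang argument --- Taylor expansion, elimination of configurations with a degree-one vertex via \eqref{cancel}, H\"older against the tensorized measure with exponents matched to the multiplicity profile, and the final combinatorial count of leaf-free multigraphs --- and at that level of detail it is correct. One point worth flagging for when you fill in the details: a given particle $x_a$ can occur both as a first and as a second argument among the $k$ factors, and the hypothesis \eqref{gamma} only controls $\sup_y|\phi(\cdot,y)|$ in the first slot, so the bound ``majorize each factor by $\sup_z|\phi(y_a,z)|$'' is not quite the right move on its own; in the original proof one first chooses, for each factor, which of its two arguments to keep and which to supremize away, and the choice has to be made consistently with the block structure so that every surviving variable carries at least two factors after the supremum --- this is where the careful degree bookkeeping you allude to actually enters.
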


\red{We mention that the condition \eqref{gamma} automatically holds when $\phi \in L^\infty$ with small enough $L^\infty$ norm, which is exactly the case when working on the torus as in \cite{jabin2018quantitative}. Under this extra assumption, there is a simpler proof given by Lim-Lu-Nolen \cite{lim2020quantitative}, using the probabilistic method and martingale inequalities. However, we cannot expect $\phi \in L^\infty$ when working on the whole space due to the lack of a uniform positive lower bound on the density $\bar\rho$. This demonstrates the importance of the general condition \eqref{gamma}.
}

It is straightforward to verify that $\phi(x,y)$ defined in (\ref{phi}) satisfies the canceling condition (\ref{cancel}). Hence it suffices to bound the supremum in $p$ appearing in (\ref{gamma}) and choose $\eta$ to be small enough.

We first make the following critical observation. 
\begin{lemma}\label{psi}
    For any fixed $x$, the function $\red{\phi}(x,y)$ is $L^\infty$ in $y$ and can be estimated as 
    \begin{equation*}
        \sup_{y \in \mathbb{R}^2}  |\red{\phi}(x,y)| \leq \red{C_1'}(1+\sqrt{t}+|x|^2).
    \end{equation*}
    \red{Here the constant $C_1'$ is independent of $x$.}
\end{lemma}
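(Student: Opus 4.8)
The plan is to bound each of the three terms in the definition \eqref{phi} of $\phi(x,y)$ separately, using the regularity estimates already established for the vorticity $\bar\rho$. Recall
\[
\phi(x,y)= \tfrac 1 2  K \ast \bar{\rho} (x) \cdot \nabla \log \bar{\rho}(x)+ \tfrac 1 2 K \ast \bar{\rho} (y) \cdot \nabla \log \bar{\rho}(y)- \tfrac 1 2 K(x-y) \cdot \big(\nabla \log \bar{\rho}(x)-\nabla \log \bar{\rho}(y)\big).
\]
For the first term, Lemma \ref{kconw} gives $\|K\ast\bar\rho(\cdot,t)\|_\infty \leq A_1/\sqrt{t\vee 1} \leq A_1$, while Theorem \ref{loggradient} gives $|\nabla\log\bar\rho(x,t)| \leq M_1(1+|x|)$; hence this term is bounded by $C(1+|x|)$, which is absorbed into $C_1'(1+|x|^2)$. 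The genuinely new feature compared to the torus case is that $\nabla\log\bar\rho$ is unbounded, so the terms involving the variable $y$ must be handled with care.

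First I would treat the second term, $\tfrac12 K\ast\bar\rho(y)\cdot\nabla\log\bar\rho(y)$. Here I cannot simply use $|\nabla\log\bar\rho(y)| \leq M_1(1+|y|)$, since that grows in $y$ and we need an estimate uniform in $y$. The key is to pair the growth of $\nabla\log\bar\rho$ against the \emph{decay} of $K\ast\bar\rho$. Since $K\ast\bar\rho(y) = \int K(y-z)\bar\rho(z)\,\mathrm{d}z$, splitting $K=K_1+K_2$ as in Lemma \ref{kconw} and using the Gaussian upper bound from Lemma \ref{mostgauss}, namely $\bar\rho(z,t) \lesssim \frac{1}{t\vee1}\exp(-|z|^2/(8t+C_2'))$, one sees that $|K\ast\bar\rho(y,t)|$ itself decays like $e^{-c|y|^2/(1+t)}$ for $|y|$ large (the far-field contribution is exponentially small, the near-field is $O(\|\bar\rho\|_\infty)$ but $\bar\rho$ is Gaussian-small there too when $|y|$ is large). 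Multiplying the polynomial bound $M_1(1+|y|)$ by this Gaussian decay yields a quantity bounded uniformly in $y$, with a constant that involves $\sqrt{t}$ in a mild way (from the $t$-dependence of the Gaussian width), consistent with the $\sqrt{t}$ appearing in the claimed bound. This is the step I expect to be the main obstacle: making the cancellation between polynomial growth and Gaussian decay quantitative and tracking the $t$-dependence so that it comes out as $1+\sqrt{t}$ rather than something worse.

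For the third term, $-\tfrac12 K(x-y)\cdot(\nabla\log\bar\rho(x)-\nabla\log\bar\rho(y))$, I would again split according to whether $|x-y|$ is large or small. When $|x-y|\geq 1$, $|K(x-y)| \leq \frac{1}{2\pi}$ is bounded, but $\nabla\log\bar\rho(y)$ is not; however, using the mean value form $\nabla\log\bar\rho(x)-\nabla\log\bar\rho(y) = \int_0^1 \nabla^2\log\bar\rho(x+s(y-x))\cdot(x-y)\,\mathrm{d}s$ together with the Hessian bound from Theorem \ref{loghessian}, $|\nabla^2\log\bar\rho| \leq M_2(1+|\cdot|^2)$, gives a bound $\lesssim |x-y|(1+|x|^2+|y|^2)$, which still grows in $y$. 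So for $|x-y|$ large one must instead combine $|K(x-y)|\le \frac{1}{2\pi}$ with the separate bounds $|\nabla\log\bar\rho(x)|\le M_1(1+|x|)$ and an estimate of $|\nabla\log\bar\rho(y)|$ that is $y$-uniform after multiplication by $K(x-y)$. Alternatively, and more cleanly, one writes $K(x-y)\cdot\nabla\log\bar\rho(y) = \frac{1}{\bar\rho(y)}K(x-y)\cdot\nabla\bar\rho(y)$ and exploits that $|\nabla\bar\rho|$ has a Gaussian upper bound (from Lemma \ref{allsobolev} combined with Lemma \ref{mostgauss}, or directly from the Hamilton estimates) while $\bar\rho(y)$ has only a Gaussian \emph{lower} bound (Corollary \ref{leastgauss2}) — but the ratio $|\nabla\bar\rho(y)|/\bar\rho(y) = |\nabla\log\bar\rho(y)|$ is already controlled by $M_1(1+|y|)$, so the honest route is the mean-value route combined with the $|x-y|\le 1$ vs $|x-y|\ge1$ split: for $|x-y|\le1$ use $|K(x-y)|\le\frac{1}{2\pi|x-y|}$ against the Lipschitz bound $|\nabla\log\bar\rho(x)-\nabla\log\bar\rho(y)| \le M_2(1+|x|^2+1)|x-y|$ (valid since $|y|\le|x|+1$), killing the singularity of $K$ and leaving $C(1+|x|^2)$; for $|x-y|\ge1$ use boundedness of $K$ together with the fact that the difference of gradients, when paired with a bounded kernel, is again dominated by a $y$-uniform quantity after invoking the Gaussian decay of $\nabla\bar\rho$ as for the second term. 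Collecting the three contributions gives $\sup_y|\phi(x,y)| \leq C_1'(1+\sqrt t+|x|^2)$ with $C_1'$ independent of $x$ (and of $t$), which is exactly the claim.
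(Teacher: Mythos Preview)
Your handling of the third term for $|x-y|\geq 1$ has a genuine gap. You propose to use only that $|K(x-y)|\leq \frac{1}{2\pi}$ and then invoke ``Gaussian decay of $\nabla\bar\rho$'' to make the $y$-dependence disappear. But the quantity in play is $\nabla\log\bar\rho(y)=\nabla\bar\rho(y)/\bar\rho(y)$, and by Corollary~\ref{leastgauss2} the denominator decays at least as fast as any Gaussian decay you could claim for the numerator; the ratio genuinely grows like $M_1(1+|y|)$ and there is nothing further to exploit. The missing idea is elementary but decisive: keep the full decay $|K(x-y)|=\frac{1}{2\pi|x-y|}$ (not just its bound by $\frac{1}{2\pi}$) and use the triangle inequality $|y|\leq |x-y|+|x|$, so that
\[
|K(x-y)|\,|\nabla\log\bar\rho(y)| \;\le\; \frac{M_1(1+|y|)}{2\pi|x-y|} \;\le\; \frac{M_1(1+|x-y|+|x|)}{2\pi|x-y|} \;\le\; C(1+|x|)
\]
for $|x-y|\geq 1$. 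Together with the $\nabla\log\bar\rho(x)$ piece this bounds the third term by $C(1+|x|)$, which is exactly the paper's argument.

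Your second-term argument also has an error, though a less fatal one. The velocity $K\ast\bar\rho(y)$ does \emph{not} decay like a Gaussian in $|y|$: since $|K|$ falls off only like $1/|\cdot|$, one has $K\ast\bar\rho(y)\sim \frac{1}{2\pi|y|}$ for large $|y|$ (the far-field contribution you call ``exponentially small'' is in fact the dominant one). Polynomial decay $1/|y|$ is still enough to absorb the linear growth of $\nabla\log\bar\rho(y)$, but tracking the $t$-dependence this way is awkward. The paper instead writes $(1+|y|)\int \frac{\bar\rho(z)}{|y-z|}\,\mathrm{d}z$, moves the growth inside via $|y|\leq |y-z|+|z|$, and splits into $|y-z|\lessgtr 1$; one is left with $C\int(1+|z|)\bar\rho(z)\,\mathrm{d}z + C\sup_s |s|\bar\rho(s)$, and the Gaussian upper bound (Lemma~\ref{mostgauss}) makes both quantities $O(1+\sqrt{t})$. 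This is the actual source of the $\sqrt t$ in the statement.
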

\begin{proof}
    We consider respectively the three terms of $\red{\phi}(x,y)$. From Lemma \ref{kconw} and Theorem \ref{loggradient} we have
    \begin{equation*}
        |(K \ast \bar{\rho})(x) \cdot \nabla \log \bar{\rho} (x)| \leq C(1+|x|).
    \end{equation*}
    For the second term we bound by
    \begin{align*}
        & |(K \ast \bar{\rho})(y) \cdot \nabla \log \bar{\rho}(y)| \leq C(1+|y|)\Big|\int_{\mathbb{R}^2}  K(y-z)\bar{\rho}(z)\ud z \Big|\\
        &\leq C+C\int_{\mathbb{R}^2} \frac{|y|}{|z-y|}\bar{\rho}(z) \ud z\\
        &\red{\leq C+C\int_{|z-y| \geq 1} \frac{(1+|z|)\bar\rho(z)}{|z-y|} \ud z+C \int_{|z-y| \leq 1} \frac{|z-y|+|z|}{|z-y|} \bar\rho(z) \ud z}\\
        &\leq C+C\int_{|z-y| \geq 1} (1+|z|)\bar{\rho}(z) \ud z+\red{C\int_{\R^2} \bar\rho(z) \ud z+} C \Big(\sup_{s \in \mathbb{R}^2} |s|\bar{\rho}(s) \Big) \cdot \int_{|z-y| \leq 1} \frac{1}{|z-y|} \ud z\\
        &\leq C+C\int_{\mathbb{R}^2}  (1+|z|)\bar{\rho}(z) \ud z+C\Big(\sup_{s \in \mathbb{R}^2} |s| \bar{\rho}(s) \Big) \leq C(1+\sqrt{t}).
    \end{align*}
    \red{The last inequality comes from the Gaussian upper bound Lemma \ref{mostgauss}.}
    For the third term, we should deal with the cases $|y-x|\leq 1$ and $|y-x|\geq 1$ respectively. When $|y-x| \leq 1 $, we apply the mean-value theorem and Theorem \ref{loghessian} to obtain that 
    \begin{equation*}
        |K(x-y) \cdot (\nabla \log \bar{\rho}(x)-\nabla \log \bar{\rho}(y))| \leq C\sup_{|z-x| \leq 1} |\nabla^2 \log \bar{\rho}(z)| \leq C(1+|x|^2).
    \end{equation*}
    Otherwise we use Theorem \ref{loggradient}:
    \begin{equation*}
        |K(x-y) \cdot (\nabla \log \bar{\rho}(x)-\nabla \log \bar{\rho}(y))| \leq C\frac{1+|x|+|y|}{|y-x|} \red{\leq C\frac{1+|y-x|+2|x|}{|y-x|}} \leq C(1+|x|).
    \end{equation*}
    This completes the proof.
\end{proof}
Now recall from the remark \red{at the end of \cite[Section 1.3]{jabin2016mean}} that
\begin{equation*}
    \sup_{p \geq 1} \frac{\Vert f \Vert_{L^p(\bar{\rho}\ud x)}}{p}< \infty
\end{equation*}
is equivalent to \red{the condition} that there exists some $\lambda>0$ such that
\begin{equation}\label{lambda}
    \int_{\mathbb{R}^2}  e^{\lambda f} \bar{\rho} \ud x<\infty.
\end{equation}
Together with Lemma \ref{psi}, it suffices to check the exponential integrability condition (\ref{lambda}) for $f(x)=\red{C_1'}(1+\sqrt{t}+|x|^2)$. However, this is valid once we recall the Gaussian upper bound Lemma \ref{mostgauss}
\begin{equation*}
    \Bar{\rho}(x,t) \leq \frac{\red{C_2'}}{t \vee 1} \exp\red{\Big(}-\frac{|x|^2}{8t+\red{C_2'}}\red{\Big)}.
\end{equation*}
Taking $\lambda=\lambda(t)=\frac{1}{2\red{C_1'}(8t+\red{C_2'})}$, we can bound the exponential integral \eqref{lambda} by
\begin{equation}
    \int e^{\lambda(t)f} \bar\rho \ud x \red{\leq \frac{2C_2'}{1+t} \int e^{\frac{1+\sqrt{t}+|x|^2}{2(8t+C_2')}} e^{-\frac{|x|^2}{8t+C_2'}}} \leq \red{C_3'},
\end{equation}
\red{for some constant $C_3'$ that depends on $C_1,C_2,C_3, \|\bar\rho_0\|_{W^{2,\infty}}$}. However, by expanding the left-hand side, we obtain as in \cite[Section 1.3]{jabin2016mean} that
\begin{equation*}
    \int e^{\lambda(t)f}\bar\rho \ud x \geq \frac{\lambda^p}{p!} \|f\|_{L^p(\bar\rho \ud x)}^p \geq \frac{\lambda^p}{p^p} \|f\|_{L^p(\bar\rho \ud x)}^p.
\end{equation*}
Hence we have
\begin{equation*}
    \sup_{p \geq 1} \frac{\|f\|_{L^p(\bar\rho \ud x)}}{p} \leq \frac{1}{\lambda} \int e^{\lambda f}\bar\rho \ud x \red{\leq \frac{C_3'}{\lambda(t)} \leq C_4'(1+t)},
\end{equation*}
\red{for some constant $C_4'$}. We choose $\eta(t)$ small enough to satisfy the condition \eqref{gamma}. It suffices to let $$\eta(t)=\frac{1}{\red{C_5'}(1+t)}$$ \red{for some constant $C_5'$.} Plugging into the time evolution inequality of relative entropy and applying the Gr\"onwall's lemma, we come to the final result.

\section{Proof of Lemma \ref{cal}}
\label{appA}
Recall from (\ref{vor}) that
\begin{equation*}
    \partial_t \Bar{\rho}+(K \ast \Bar{\rho}) \cdot \nabla \Bar{\rho}=\Delta \Bar{\rho}.
\end{equation*}
By direct calculation,
\begin{align*}
    \partial_{x_i} \frac{|\nabla \Bar{\rho}|^2}{\Bar{\rho}}=\frac{2\nabla \Bar{\rho}}{\Bar{\rho}} \cdot \nabla \partial_{x_i} \Bar{\rho}-\frac{|\nabla \Bar{\rho}|^2}{\Bar{\rho}^2} \partial_{x_i}\Bar{\rho}.
\end{align*}
Differentiating both sides with respect to $x_i$ and summing up in $i$, we arrive at
\begin{equation*}
    \Delta \frac{|\nabla \Bar{\rho}|^2}{\Bar{\rho}}=\frac{2\nabla \Bar{\rho}}{\Bar{\rho}} \cdot \nabla \Delta \Bar{\rho}-\frac{2\nabla \Bar{\rho}}{\Bar{\rho}^2} \cdot \nabla^2 \Bar{\rho} \cdot \nabla \Bar{\rho}+\frac{2|\nabla^2 \Bar{\rho}|^2}{\Bar{\rho}}-\frac{|\nabla \Bar{\rho}|^2}{\Bar{\rho}^2} \Delta \Bar{\rho}+\frac{2|\nabla \Bar{\rho}|^4}{\Bar{\rho}^3}-\frac{\red{2\nabla^2 \bar\rho:(\nabla \bar\rho \otimes \nabla \bar\rho)}}{\Bar{\rho}^2}.
\end{equation*}
Note also that
\begin{equation*}
    \partial_t \frac{|\nabla \Bar{\rho}|^2}{\Bar{\rho}}=\frac{2\nabla \Bar{\rho}}{\Bar{\rho}} \cdot \nabla(\Delta \Bar{\rho}-(K \ast \Bar{\rho}) \cdot \nabla \Bar{\rho})-\frac{|\nabla \Bar{\rho}|^2}{\Bar{\rho}^2}(\Delta \Bar{\rho}-(K \ast \Bar{\rho}) \cdot \nabla \Bar{\rho}).
\end{equation*}
Hence by rearranging the terms we have
\begin{align*}
    (\partial_t -\Delta)\frac{|\nabla \Bar{\rho}|^2}{\Bar{\rho}}=&-\frac{2}{\Bar{\rho}}\red{\Big| \nabla^2} \Bar{\rho}-\frac{\red{\nabla} \Bar{\rho} \red{\otimes \nabla} \Bar{\rho}}{\Bar{\rho}}\red{\Big|}^2-\frac{2}{\Bar{\rho}}\red{\nabla^2 \bar\rho \otimes ((K \ast \bar\rho) \otimes \nabla \bar\rho)}\\
    &-\frac{2}{\Bar{\rho}} \red{\nabla(K \ast \bar\rho):(\nabla \bar\rho \otimes \nabla \bar\rho)}+\frac{|\nabla \Bar{\rho}|^2}{\Bar{\rho}^2}(K \ast \Bar{\rho}) \cdot \nabla \Bar{\rho}\\
    \leq &-(K \ast \Bar{\rho}) \cdot \nabla \frac{|\nabla \Bar{\rho}|^2}{\Bar{\rho}}-\frac{2}{\Bar{\rho}}\red{\nabla(K \ast \bar\rho):(\nabla \bar\rho \otimes \nabla \bar\rho)}\\
    \leq &-(K \ast \Bar{\rho}) \cdot \nabla \frac{|\nabla \Bar{\rho}|^2}{\Bar{\rho}}+\red{2A_2}\frac{|\nabla \Bar{\rho}|^2}{\Bar{\rho}},
\end{align*}
which proves (\ref{nablawsquare}).

Now we turn to the propagation of $\Bar{\rho} \log \Bar{\rho}$.
\begin{equation*}
    \partial_{x_i} (\Bar{\rho}\log \Bar{\rho})=\partial_{x_i} \Bar{\rho}+\log \Bar{\rho} \partial_{x_i}\Bar{\rho}.
\end{equation*}
Differentiating both sides with respect to $x_i$ and summing up in $i$, we arrive at
\begin{equation*}
    \Delta (\Bar{\rho}\log \Bar{\rho}) =\Delta \Bar{\rho}+\log \Bar{\rho} \Delta \Bar{\rho}+\frac{|\nabla \Bar{\rho}|^2}{\Bar{\rho}}.
\end{equation*}
Note also that
\begin{equation*}
    \partial_t (\Bar{\rho}\log \Bar{\rho})=(\Delta \Bar{\rho}-(K \ast \Bar{\rho})\cdot \nabla \Bar{\rho})+\log \Bar{\rho}(\Delta \Bar{\rho}-(K \ast \Bar{\rho} )\cdot \nabla \Bar{\rho}) .
\end{equation*}
Hence we have
\begin{align*}
    (\partial_t-\Delta)(\Bar{\rho}\log \Bar{\rho})&=-(K \ast \Bar{\rho}) \cdot \nabla \Bar{\rho}-\log \Bar{\rho} (K \ast \Bar{\rho}) \cdot \nabla \Bar{\rho}-\frac{|\nabla \Bar{\rho}|^2}{\Bar{\rho}}\\
    &=-(K \ast \Bar{\rho}) \cdot \nabla(\Bar{\rho}\log \Bar{\rho})-\frac{|\nabla \Bar{\rho}|^2}{\Bar{\rho}},
\end{align*}
which proves (\ref{wlogw}).

\section{Proof of Lemma \ref{cal2}}
\label{appB}
Recall from (\ref{vor}) that
\begin{equation*}
    \partial_t \Bar{\rho}+(K \ast \Bar{\rho}) \cdot \nabla \Bar{\rho}=\Delta \Bar{\rho}.
\end{equation*}
By direct calculation,
\begin{equation*}
    \partial_{x_i} \frac{|\nabla^2 \Bar{\rho}|^2}{\Bar{\rho}}=\frac{2}{\Bar{\rho}} \nabla^2 \Bar{\rho} :\nabla^2 \partial_{x_i}\Bar{\rho}-\frac{|\nabla^2 \Bar{\rho}|^2}{\Bar{\rho}^2 }\partial_{x_i}\Bar{\rho}.
\end{equation*}
Differentiating both sides with respect to $x_i$, and summing up in $i$, we arrive at
\begin{equation*}
    \Delta \frac{|\nabla^2 \Bar{\rho}|^2}{\Bar{\rho}}=\frac{2}{\Bar{\rho}} \nabla^2 \Bar{\rho} :\nabla^2 \Delta \Bar{\rho}+\frac{2}{\Bar{\rho}}|\nabla^3 \Bar{\rho}|^2-\frac{4}{\Bar{\rho}^2}\nabla^2 \Bar{\rho} \cdot \nabla^3 \Bar{\rho} \cdot \nabla \Bar{\rho}-\frac{|\nabla^2 \Bar{\rho}|^2}{\Bar{\rho}^2}\Delta \Bar{\rho}+\frac{2|\nabla^2 \Bar{\rho}|^2}{\Bar{\rho}^3}|\nabla \Bar{\rho}|^2.
\end{equation*}
Note also that
\begin{equation*}
    \partial_t \frac{|\nabla^2 \Bar{\rho}|^2}{\Bar{\rho}}=\frac{2}{\Bar{\rho}}\nabla^2 \Bar{\rho}: \nabla^2(\Delta \Bar{\rho}-(K \ast \Bar{\rho}) \cdot \nabla \Bar{\rho})-\frac{|\nabla^2 \Bar{\rho}|^2}{\Bar{\rho}^2}(\Delta \Bar{\rho}-(K \ast \Bar{\rho}) \cdot \nabla \Bar{\rho}).
\end{equation*}
Hence by rearranging the terms we have
\begin{align*}
    (\partial_t -\Delta) \frac{|\nabla^2 \Bar{\rho}|^2}{\Bar{\rho}}=&\, -\frac{2}{\Bar{\rho}} \red{\Big| \nabla^3} \Bar{\rho}-\frac{\red{\nabla} \Bar{\rho} \red{\otimes \nabla^2} \Bar{\rho}}{\Bar{\rho}} \red{\Big|}^2\\
    &\, -\frac{4}{\Bar{\rho}}\nabla^2 \Bar{\rho}: ((K \ast \nabla \Bar{\rho}) \cdot \nabla^2 \Bar{\rho})-\frac{2}{\Bar{\rho}} \nabla^2 \Bar{\rho} \cdot (K \ast \nabla^2 \Bar{\rho}) \cdot \nabla \Bar{\rho}\\
    &\, -\frac{2}{\Bar{\rho}}(K \ast \Bar{\rho}) \cdot \nabla^3 \Bar{\rho} \cdot \nabla^2 \Bar{\rho}+\frac{|\nabla^2 \Bar{\rho}|^2}{\Bar{\rho}^2}(K \ast \Bar{\rho}) \cdot \nabla \Bar{\rho}\\
    \leq&\, \red{\frac{5A_2}{t \vee 1}}\frac{|\nabla^2 \Bar{\rho}|^2}{\Bar{\rho}}+\red{\frac{A_2}{(t \vee 1)^2}}\frac{|\nabla \Bar{\rho}|^2}{\Bar{\rho}}-(K \ast \Bar{\rho}) \cdot \nabla \frac{|\nabla^2 \Bar{\rho}|^2}{\Bar{\rho}},
\end{align*}
which proves (\ref{hessianw}).

Now we turn to the propagation of $\Bar{\rho}(\log \Bar{\rho})^2$.
\begin{equation*}
    \partial_{x_i} (\Bar{\rho}(\log \Bar{\rho})^2)=\partial_{x_i} \Bar{\rho} (\log \Bar{\rho})^2+2\partial_{x_i}\Bar{\rho} \log \Bar{\rho}.
\end{equation*}
Differentiating both sides with respect to $x_i$, and summing up in $i$, we arrive at
\begin{equation*}
    \Delta (\Bar{\rho}(\log \Bar{\rho})^2) =\Delta \Bar{\rho}(\log \Bar{\rho})^2+2\frac{|\nabla \Bar{\rho}|^2}{\Bar{\rho}}\log \Bar{\rho}+2\Delta \Bar{\rho} \log \Bar{\rho}+2\frac{|\nabla \Bar{\rho}|^2}{\Bar{\rho}}.
\end{equation*}
Note also that
\begin{equation*}
    \partial_t (\Bar{\rho}(\log \Bar{\rho})^2)=(\Delta \Bar{\rho}-(K \ast \Bar{\rho}) \cdot \nabla \Bar{\rho})(\log \Bar{\rho})^2+2\log \Bar{\rho}(\Delta \Bar{\rho}-(K \ast \Bar{\rho}) \cdot \nabla \Bar{\rho}).
\end{equation*}
Hence we have
\begin{align*}
    (\partial_t -\Delta)(\Bar{\rho}(\log \Bar{\rho})^2)=-(K \ast \Bar{\rho}) \cdot \nabla (\Bar{\rho}(\log \Bar{\rho})^2)-\frac{2}{\Bar{\rho}}|\nabla \Bar{\rho}|^2(1+\log \Bar{\rho}),
\end{align*}
which proves (\ref{wlogwsquare}).

\medskip 

\noindent {\bf Acknowledgments.} X. F. and Z. W. are partially  supported by the National Key R\&D Program of China, Project Number 2021YFA1002800, NSFC grant No.12171009, Young Elite Scientist Sponsorship Program by China Association for Science and Technology (CAST) No. YESS20200028 and the Fundamental Research Funds for the Central Universities (the start-up fund), Peking University.

\noindent {\bf Declaration.} The authors declare that they have no conflict of interest.

\bibliography{ref}
\bibliographystyle{abbrv}

\end{document}